\documentclass[reqno,10pt,centertags]{amsart}
\usepackage{amsmath,amsthm,amscd,amssymb,latexsym,esint,upref,stmaryrd,
	enumerate,color,verbatim,yfonts}
\usepackage{hyperref}
\newcommand*{\mailto}[1]{\href{mailto:#1}{\nolinkurl{#1}}}



\newcommand{\R}{{\mathbb R}}
\newcommand{\N}{{\mathbb N}}
\newcommand{\Z}{{\mathbb Z}}
\newcommand{\C}{{\mathbb C}}

\newcommand{\bbC}{{\mathbb{C}}}

\newcommand{\bbN}{{\mathbb{N}}}

\newcommand{\bbR}{{\mathbb{R}}}

\newcommand{\bbZ}{{\mathbb{Z}}}

\newcommand{\cB}{{\mathcal B}}

\newcommand{\cD}{{\mathcal D}}

\newcommand{\cF}{{\mathcal F}}

\newcommand{\cH}{{\mathcal H}}

\newcommand{\cK}{{\mathcal K}}

\newcommand{\cS}{{\mathcal S}}

\newcommand{\cV}{{\mathcal V}}
\newcommand{\cW}{{\mathcal W}}

\newcommand{\beq}{\begin{equation}}
	\newcommand{\enq}{\end{equation}}




\DeclareMathOperator{\supp}{supp}

\DeclareMathOperator{\dom}{dom}

\DeclareMathOperator*{\slim}{s-lim}

\renewcommand{\Re}{\text{\rm Re}}

\newcommand{\no}{\notag}
\newcommand{\lb}{\label}
\newcommand{\f}{\frac}

\newcommand{\ol}{\overline}

\newcommand{\wti}{\widetilde}

\newcommand{\hatt}{\widehat}
\newcommand{\dott}{\,\cdot\,}

\renewcommand{\dot}{\overset{\textbf{\Large.}}}

\newcommand{\bi}{\bibitem}

\let\geq\geqslant
\let\leq\leqslant


\newcommand{\la}{\lambda}

\newcommand{\al}{\alpha}

\newcommand{\ga}{\gamma}

\newcommand{\vt}{\theta}

\makeatletter
\def\theequation{\@arabic\c@equation}


\allowdisplaybreaks
\numberwithin{equation}{section}

\newtheorem{theorem}{Theorem}[section]

\newtheorem{lemma}[theorem]{Lemma}
\newtheorem{corollary}[theorem]{Corollary}
\newtheorem{definition}[theorem]{Definition}
\newtheorem{hypothesis}[theorem]{Hypothesis}
\newtheorem{example}[theorem]{Example}

\theoremstyle{remark}
\newtheorem{remark}[theorem]{Remark}


\begin{document}

	\title[Abstract Left-Definite Theory]{Abstract Left-Definite Theory: A Model Operator Approach, Examples, Fractional Sobolev Spaces, and Interpolation Theory}

	\author[C.\ Fischbacher]{Christoph Fischbacher}
	\address{Department of Mathematics,
		Baylor University, Sid Richardson Bldg., 1410 S.\,4th Street, Waco, TX 76706, USA}
	\email{\mailto{Christoph\_Fischbacher@baylor.edu}}
	\urladdr{\url{http://www.baylor.edu/math/index.php?id=985897}}

	\author[F.\ Gesztesy]{Fritz Gesztesy}
	\address{Department of Mathematics,
		Baylor University, Sid Richardson Bldg., 1410 S.\,4th Street, Waco, TX 76706, USA}
	\email{\mailto{Fritz\_Gesztesy@baylor.edu}}
	\urladdr{\url{http://www.baylor.edu/math/index.php?id=935340}}

	\author[P.\ Hagelstein]{Paul Hagelstein}
	\address{Department of Mathematics,
		Baylor University, Sid Richardson Bldg., 1410 S.\,4th Street, Waco, TX 76706, USA}
	\email{\mailto{Paul\_Hagelstein@baylor.edu}}
	\urladdr{\url{https://www.baylor.edu/math/index.php?id=54007}}

	\author[L.\ L.\ Littlejohn]{Lance L. Littlejohn}
	\address{Department of Mathematics,
		Baylor University, Sid Richardson Bldg., 1410 S.\,4th Street, Waco, TX 76706, USA}
	\email{\mailto{Lance\_Littlejohn@baylor.edu}}
	\urladdr{\url{http://www.baylor.edu/math/index.php?id=53980}}


	\date{\today}
	\thanks{P. H. is partially supported by grant \#521719 from the Simons Foundation.}
		\@namedef{subjclassname@2020}{\textup{2020} Mathematics Subject Classification}
		\subjclass[2020]{Primary: 33C45, 46B70, 46E35; Secondary: 34L40, 47B25, 47E05.}
		\keywords{Abstract left-definite theory, Hermite operator, harmonic oscillator, fractional Sobolev spaces, interpolation theory.}

\begin{abstract}
We use a model operator approach and the spectral theorem for self-adjoint operators in a Hilbert space to derive the basic results of abstract left-definite theory in a straightforward manner. The theory is amply illustrated with a variety of concrete examples employing scales of Hilbert spaces, fractional Sobolev spaces, and domains of (strictly) positive fractional powers of operators, employing interpolation theory.

In particular, we explicitly describe the domains of positive powers of the harmonic oscillator operator in $L^2(\bbR)$ \big(and hence that of the Hermite operator in $L^2\big(\bbR; e^{-x^2}dx)\big)$\big) in terms of fractional Sobolev spaces, certain commutation techniques, and positive powers of (the absolute value of) the operator of multiplication by the independent variable in $L^2(\bbR)$.            
\end{abstract}

\maketitle

{\scriptsize{\tableofcontents}}

\section{Introduction and a Brief Historical Overview of General Left-Definite Theory} \lb{s1}

Left-definite theory has its roots in the theory of second-order
Sturm--Liouville differential equations. Indeed, consider the Sturm--Liouville
(SL) generalized eigenvalue equation
\begin{equation}
(\tau f)(x):= r(x)^{-1} \big[(-p(x) f^{\prime}(x))^{\prime}+q(x) f(x)\big]
=\lambda f(x) \, \text{ for a.e.~$x \in (a,b)$,} \lb{1.1}
\end{equation}
with $(a,b) \subseteq \bbR$. Here, we make the
usual assumptions that $1/p,$ $q,$ $r$ are real-valued (Lebesgue) a.e.~on $(a,b)$, locally integrable
on $(a,b)$, with $p>0$ a.e.~on $(a,b)$ and $f \in AC_{loc}((a,b))$ with $f^{[1]} := p f' \in AC_{loc}((a,b))$. The two obvious sesquilinear forms in the study of \eqref{1.1} are
\begin{equation}
\int_{a}^{b} r(x) dx \, \ol{f(x)} g(x) \, \text{ and } \, \int_{a}^{b} dx \, \big[p(x) \ol{f^{\prime}(x)}
g^{\prime}(x) +q(x) \ol{f(x)} g(x)\big].
\end{equation}
If $r>0$ a.e.~on $(a,b)$, this SL problem is called {\it right-definite} since in this case the sesquilinear form
\begin{equation}
\int_{a}^{b} r(x) dx \, \ol{f(x)} g(x), \quad f, g \in L^2((a,b); rdx)
\end{equation}
is positive in the sense that it defines the scalar product in the Hilbert space $L^2((a,b); rdx)$. The problem is called {\it left-definite} if the sequilinear form
\begin{equation}
\int_{a}^{b} dx \, \big[p(x) \ol{f^{\prime}(x)} g^{\prime}(x) + q(x) \ol{f(x)} g(x)\big]
\end{equation}
is positive, that is, it defines a positive scalar product. When a SL problem is both right-definite and left-definite, the sesquilinear forms are connected through {\it Dirichlet's formula }
\begin{align}
\begin{split}
\left(\tau f,g\right)  _{L^{2}((a,b);rdx)} &=\int_{a} r(x) dx \, \ol{(\tau f)(x)} g(x)    \\
&= \int_a^b dx \, \big[- (p(x) \ol{f'(x)})' + q(x) \ol{f(x)}\,\big] g(x)    \\
&= \int_{a}^{b} dx \, \big[p(x)^{-1} \ol{f^{[1]}(x)} g^{[1]}(x) + q(x) \ol{f(x)} g(x)\big]   \\
&= \int_a^b dx \, \big[p(x) \ol{f'(x)} g'(x) + q(x) \ol{f(x)} g(x)\big] :=(f,g)_{1}.
\end{split}
\end{align}
Here $f,g \in AC_{loc}((a,b))$, $p f', p g' \in AC_{loc}((a,b))$ with $f, g$ of compact support in $(a,b)$.  In the literature, the inner product $(\dott,\dott)_{1}$, extended to elements in appropriate function spaces (e.g., Sobolev-type spaces), is called the {\it left-definite inner} product. Littlejohn and
Wellman \cite{LW02}, \cite{LW13} refer to $(\dott.\dott)_{1}$ as the {\it first} left-definite inner
product. Curiously, before the work of Littlejohn and Wellman, there is no
mention of other left-definite inner products in the literature associated
with the Sturm--Liouville expression $\tau$. R. Vonhoff \cite{Vo00} was the first to consider the (first) left-definite analysis of a fourth-order (Legendre type) differential expression. We recommend the texts
of Bennewitz, Brown, and Weikard \cite[Ch.~5]{BBW20} and Zettl \cite[Chs.~5, 12]{Ze05} for in-depth
discussions and an extensive literature of left-definite SL theory.

In \cite{LW02}, Littlejohn and Wellman computed the continuum of left-definite
spaces and operators for the self-adjoint operator $A:\ell^{2}(\bbN) \rightarrow
\ell^{2}(\bbN)$ with compact resolvent, discussed in Example \ref{e3.2a}. In \cite{EKLWY07}, \cite{ELW00}, \cite{ELW02}, \cite{LW02}, the authors apply this general left-definite
theory to several examples, including the self-adjoint,
positively bounded below operators $A$ generated by, respectively, the
classical second-order expressions of Laguerre, Hermite, Legendre (and more
generally, Jacobi), having the same-named orthogonal polynomials as
eigenfunctions. Even though the general theory guarantees a continuum of
left-definite spaces and left-definite operators in each of these examples,
these authors only computed a countably infinite sequence of the spaces
$\{\cH_{n}\}_{n \in \bbN}$ and associated operators $\{A_{n}\}_{n \in \bbN}$.
We further address on this issue below.

One aim of this paper is to present a new model operator approach to left-definite
theory. In \cite{LW02}, \cite{LW13}, the authors started with basic hypotheses (see
(\ref{1.6})), and then employed the Hilbert space spectral theorem to obtain
the main theoretical results of left-definite theory. With our model operator approach,
the spectral theorem is also involved front and center, but now in such a way that the abstract results obtained in \cite{LW02}, \cite{LW13} follow in a straightforward manner.

A second aim of this paper is to employ interpolation theory to describe domains of (strictly) positive fractional powers of the one-dimensional Laplacian on a variety of bounded and unbounded intervals and, in particular, describe the domains of positive fractional powers of the harmonic oscillator and the unitarily equivalent Hermite operator on $\bbR$. 

Let $\cH=(\cV,(\dott,\dott)_{\cH})$ be a Hilbert space
(with underlying vector space $\cV$). Suppose $A:\dom(A)\subset\cH \rightarrow \cH$ is self-adjoint and bounded below in $\cH$ by $c I_{\cH}$, for some $c \in (0,\infty)$ and $I_{\cH}$ the
identity operator in $\cH$; that is to say,
\[
(Af,f)_{\cH} \geq c \|f\|^2_{\cH}, \quad f \in \dom(A).
\]
For $s \in (0,\infty)$, it follows from the spectral theorem for self-adjoint operators in a Hilbert space that $A^{s}:\dom(A^{s})\subset\cH \rightarrow \cH$ is self-adjoint and
bounded below in $\cH$ by $c^{s} I_{\cH}$. Suppose
$\cV_{s}$ is a linear subspace of $\cV$ and $(\dott,\dott)_{s}$ is an inner product on $\cV_{s}\times \cV_{s}.$ In \cite{LW02}, $\cH_{s}=(\cV_{s}$, $(\dott,\dott)_{s})$ is called an $s$-th left-definite space associated with the pair $(\cH,A)$ if each of the following five conditions is satisfied:
\begin{equation}
\begin{array}
[c]{l}
(i) \; \; \text{ $\cH_{s}$ is a Hilbert space.}\\
(ii) \,\, \text{ $\dom(A^{s})$ is a subspace of $\cV_{s}$.} \\
(iii) \text{ $\dom(A^{s})$ is dense in $\cH_{s}$.} \\
(iv) \, \text{ $(f,f)_{s}\geq c^{s}(f,f)_{\cH}$ for all $f \in\cV_{s}$.} \\
(v) \,\,\, \text{ $(A^{s}f,g)_{\cH}=(f,g)_{s}$ for all $f \in\dom(A^{s})$, $g \in\cV_{s}$.}
\end{array}
\lb{1.6}
\end{equation}

\noindent Initially, Littlejohn and Wellman primarily considered the case $s=1,$ the first left-definite setting. However, they realized that the above definition naturally extends to the scale of spaces indexed by $s \in (0,\infty)$.

In \cite{LW02}, \cite{LW13}, Littlejohn and Wellman establish the following theorem by
applying the Hilbert space spectral theorem.

\begin{theorem}
Under the above conditions, with $\cH_{0}=\cH$ and
$(\dott,\dott)_{0}=(\dott,\dott)_{\cH},$ and $s \in (0,\infty)$, the following items $(i)$--$(v)$ hold\,$:$ \\[1mm]
$(i)$ $\cH_{s}=(\cV_{s},(\dott,\dott)_{s})$, where
\begin{equation}
\cV_{s}=\dom\big(A^{s/2}\big) \, \text{ and } \, (f,g)_{s}= \big(A^{s/2}f,A^{s/2}g\big)_{\cH}, \quad
f, g \in \dom\big(A^{s/2}\big),
\end{equation}
satisfy conditions $(i)$--$(v)$ in (\ref{1.6}). Moreover, $\cV_{s}$ and
$(\dott,\dott)_{s}$ are uniquely determined by these conditions. \\[1mm]
$(ii)$ If $A$ is bounded, then $\mathcal{V}_{s}=\mathcal{V}$ for all $s \in (0,\infty)$
and the inner products $(\dott,\dott)_{\cH}$ and $(\dott,\dott)_{s}$
are equivalent. \\[1mm]
$(iii)$ If $A$ is unbounded, then $\cV_{s}$ is a proper subspace of
$\cV;$ moreover, if $0<s<t,$ then $\cV_{t}$ is a proper subspace of $\cV_{s}.$ Moreover, none of the inner products $(\dott,\dott)_{\cH},$ $(\dott,\dott)_{s},$ $(\dott,\dott)_{t}$ are equivalent. \\[1mm]
$(iv)$ For $s \in (0,\infty)$, $\cH_{s}=A^{-s/2}\cH$. \\[1mm]
$(v)$ For $s, t\in [0,\infty)$, the operator $A^{(s-t)/2}:\cH_{s}\rightarrow\cH_{t}$ is an isometric isomorphism of $\cH_{s}$ onto $\cH_{t}$.
\end{theorem}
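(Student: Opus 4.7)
The plan is to reduce the entire theorem to the Borel functional calculus associated with the spectral resolution $\{E_A(\cdot)\}$ of $A$. Since $A \ge c I_\cH$ with $c>0$, one has $\sigma(A) \subseteq [c,\infty)$, so for every $r \in \bbR$ the operator $A^r = \int_c^\infty \lambda^r \, dE_A(\lambda)$ is well-defined and self-adjoint, bounded when $r \le 0$, and unbounded with dense domain when $r>0$ (unless $A$ is itself bounded). Three consequences of the spectral theorem will be invoked repeatedly: the nesting $\dom(A^t) \subseteq \dom(A^s)$ for $0 \le s \le t$, the spectral lower bound $\|A^r f\|_\cH \ge c^r \|f\|_\cH$ for $r \ge 0$ and $f\in\dom(A^r)$, and the adjoint identity $(A^s f, g)_\cH = (A^{s/2} f, A^{s/2} g)_\cH$ for $f \in \dom(A^s)$, $g \in \dom(A^{s/2})$.

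For the existence half of (i), set $\cV_s := \dom(A^{s/2})$ with $(f,g)_s := (A^{s/2} f, A^{s/2} g)_\cH$ and check \eqref{1.6} item by item. Conditions (ii), (iv), (v) are immediate from the three facts above. Completeness (condition (i)) follows because the spectral lower bound makes $\|\cdot\|_s$ equivalent on $\dom(A^{s/2})$ to the graph norm of the closed operator $A^{s/2}$. Density of $\dom(A^s)$ in $\cH_s$ (condition (iii)) is obtained by spectral truncation: for $f \in \dom(A^{s/2})$, setting $f_n := E_A([c,n])f$ yields $f_n \in \dom(A^r)$ for every $r>0$ and $\|A^{s/2}(f-f_n)\|_\cH^2 = \int_n^\infty \lambda^{s}\, d(E_A(\lambda)f,f)_\cH \to 0$ by dominated convergence.

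For the uniqueness half of (i), suppose $(\cV_s',(\cdot,\cdot)_s')$ also satisfies \eqref{1.6}. Condition (v) forces $(\cdot,\cdot)_s' = (A^{s/2}\cdot, A^{s/2}\cdot)_\cH$ on the subspace $\dom(A^s) \subseteq \cV_s'$, and condition (iv) makes the embedding $\cV_s' \hookrightarrow \cH$ continuous. Given $f \in \cV_s'$, density (iii) yields $f_n \in \dom(A^s)$ with $f_n \to f$ in $\cV_s'$, whence $f_n \to f$ in $\cH$ and $(A^{s/2}f_n)$ is Cauchy in $\cH$; closedness of $A^{s/2}$ then gives $f \in \dom(A^{s/2})$ with $\|f\|_s' = \|f\|_s$. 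Completeness (condition (i)) of both spaces, together with density in each, upgrades the inclusion $\cV_s' \subseteq \dom(A^{s/2})$ to equality. I expect this uniqueness clause to be the main obstacle, since it is the one place where all five conditions in \eqref{1.6} must be exploited simultaneously, and where closedness of $A^{s/2}$ has to interact with the abstract completion axiom.

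Items (ii)–(v) are then short corollaries. If $A$ is bounded, $A^{s/2}$ is bounded, so $\dom(A^{s/2}) = \cV$ and the two-sided estimate $c^{s/2}\|f\|_\cH \le \|A^{s/2}f\|_\cH \le \|A^{s/2}\|\,\|f\|_\cH$ gives (ii). For (iii), the unboundedness of $A$ yields a sequence $\lambda_n \in \sigma(A)$ with $\lambda_n \to \infty$; unit vectors supported spectrally near $\lambda_n$ exhibit elements of $\cV\setminus\cV_s$ and, for $0<s<t$, of $\cV_s\setminus\cV_t$, and the ratios of the three norms on these vectors are unbounded, yielding non-equivalence. Item (iv) is the identity $\dom(A^{s/2}) = \ran(A^{-s/2})$, which is the statement that the bounded operator $A^{-s/2}$ inverts $A^{s/2}$ on its domain. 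Finally, item (v) follows from (iv) together with the one-line computation
\[
\big(A^{(s-t)/2}f, A^{(s-t)/2}g\big)_t = \big(A^{t/2}A^{(s-t)/2}f, A^{t/2}A^{(s-t)/2}g\big)_\cH = (A^{s/2}f, A^{s/2}g)_\cH = (f,g)_s,
\]
the boundary cases $s=0$ or $t=0$ being absorbed by the convention $\cH_0 = \cH$.
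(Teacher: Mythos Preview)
Your argument is correct and, in spirit, is the same as the paper's: both reduce everything to the spectral theorem for $A$. The paper's preferred packaging is slightly different, however. Rather than working with the projection-valued measure $E_A(\cdot)$ and spectral integrals as you do, the paper invokes the spectral theorem in \emph{multiplication operator form} (Theorem~\ref{t2.4}) and passes to the concrete ``model operator'' $T$ of multiplication by $\lambda$ on $L^2([1,\infty);d\mu)$; the left-definite spaces then become the weighted spaces $L^2([1,\infty);(1+\lambda^r)d\mu)$, for which all the assertions are read off by inspection. Your direct functional-calculus argument and the paper's model-operator argument are two faces of the same coin; the paper's version has the advantage that density, completeness, and the isometry in (v) become literally trivial once one is working in weighted $L^2$, whereas you have to write out the spectral-truncation and graph-norm arguments explicitly.

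Two small points. First, your sketch of (iii) is a little loose: a single unit vector spectrally supported on a bounded interval near $\lambda_n$ lies in \emph{every} $\cV_s$, so it does not by itself witness $\cV\setminus\cV_s$; you need an infinite combination $\sum a_n f_n$ with coefficients chosen so that $\sum |a_n|^2<\infty$ but $\sum \lambda_n^{s}|a_n|^2=\infty$. The paper (Lemma~\ref{l2.2}) bypasses this construction with a cleaner contradiction: if $\dom(A^{s/2})=\dom(A^{r/2})$ with $s>r$, then in the model picture multiplication by $(1+\lambda^s)/(1+\lambda^r)$ is an everywhere-defined symmetric operator on $L^2((1+\lambda^r)d\mu)$, hence bounded by Hellinger--Toeplitz, contradicting unboundedness of the support of $\mu$. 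Second, the paper does not spell out the uniqueness clause in (i) in the detail you do; your argument there (condition (v) pins down the inner product on $\dom(A^s)$, condition (iv) gives a continuous embedding, closedness of $A^{s/2}$ plus density finishes) is correct and is a useful addition.
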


Of course, these left-definite spaces $\{\cH_{s}\}_{s \in (0,\infty)}$ are an
example of a scale of Hilbert spaces; see, for instance, \cite[Ch.~I]{Be68}, and \cite[Ch.~14]{BSU96}
for a detailed treatment of scales of Hilbert spaces.

It was mentioned above that the authors in \cite{EKLWY07},
\cite{ELW00}, \cite{ELW02}, \cite{LW02} were only able to find a countably infinite
sequence of left-definite spaces $\{\cH_{n}\}_{n=1}^{\infty}$
associated with each of the self-adjoint second-order differential operators
$A$ having the Laguerre, Hermite, or Jacobi polynomials as eigenfunctions. The
reason for this can be seen to be a consequence of part $(v)$ of 
\eqref{1.6}. Indeed, part $(v)$ implies that the $s$-th left-definite space
is generated by $A^{s}.$ Since the form of $A$ in each of these three examples
is the second-order classical Laguerre, Hermite, or Jacobi differential
expression, the $s$-th power of these differential expressions naturally has a sensible meaning when $s$ is a nonnegative integer.

\begin{remark}
We note that Zettl \cite[Chs.~5, 12]{Ze05} has a related, but
different, definition of left-definite theory for SL problems. Indeed, Zettl
allows for the weight function $r$ to change signs. In this respect, in the
study of left-definite SL problems, his work is more general.
\hfill $\diamond$
\end{remark}

With the same assumptions on $A$ in $\cH$ as above (i.e., $A$ self-adjoint in $\cH$, bounded from below by $c I_{\cH}$, $c \in (0,\infty)$), and assuming $A$ to be unbounded for simplicity, suppose
$\{\cH_{s}=(\mathcal{V}_{s},(\dott,\dott)_{s})\}_{s \in (0,\infty)}$ is the
continuum of left-definite spaces associated with the pair $(\cH,A).$
If there exists a self-adjoint operator $A_{s}$ in $\cH_{s}$ satisfying
\begin{equation}
A_{s} f  =A f, \quad f \in \dom(A_{s}) \subset \dom(A),
\end{equation}
we call $A_{s}$ the $s$-th left-definite operator associated with
$(\cH,A).$

Littlejohn and Wellman \cite{LW02}, \cite{LW13} proved the following result:

\begin{theorem} \lb{t1.3}
Let $s \in (0,\infty)$. \\[1mm]
$(i)$ There exists a unique left-definite operator $A_{s}$ in
$\cH_{s}$. Indeed,
\begin{equation}
A_{s}f = A f, \quad f \in\dom(A_{s}) =\cV_{s+2}.
\end{equation}
$(ii)$ The operators $A$ in $\cH$ and $A_{s}$ in $\cH_s$ are unitarily equivalent.
\\[1mm]
$(iii)$ If $\{\phi_{\alpha}\}_{\alpha\in J},$ where $J$ is some indexing
set, is a $($complete\,$)$ set of eigenfunctions of $A$ in $\cH,$ then
$\{\phi_{\alpha}\}_{\alpha\in J}$ is a $($complete\,$)$ set of eigenfunctions of
$A_{s}$ in $\cH_{s}$. \\[1mm]
$(iv)$ For $s, t \in [0,\infty)$, we have the following operator identity
\begin{equation}
A^{(s-t)/2}A_{s}A^{(t-s)/2}=A_{t} \, \text{ valid on } \, \dom(A_{t}).
\end{equation}
$(v)$ $\dom(A_{s}) = A^{-s/2} \dom(A)$.
\end{theorem}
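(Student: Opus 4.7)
The plan is to construct $A_s$ explicitly by unitary conjugation and then extract all five items from that construction. Since $A \ge c I_\cH$ with $c > 0$, the spectral theorem makes $A^{-s/2}$ a bounded, injective, positive operator on $\cH$ with range $\cV_s = \dom(A^{s/2})$, and the identity
\begin{equation}
\big(A^{-s/2}f, A^{-s/2}g\big)_s = \big(A^{s/2} A^{-s/2}f, A^{s/2} A^{-s/2}g\big)_\cH = (f,g)_\cH
\end{equation}
shows that $U_s := A^{-s/2}$ is a unitary isomorphism $\cH \to \cH_s$. I would then \emph{define} $A_s := U_s A U_s^{-1}$ with $\dom(A_s) := U_s \dom(A) = A^{-s/2}\dom(A)$; self-adjointness of $A_s$ in $\cH_s$ is automatic from this, giving item (ii), and the domain description is item (v).

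For item (i), the chain of equivalences $f \in A^{-s/2}\dom(A) \iff A^{s/2}f \in \dom(A) \iff f \in \dom(A^{(s+2)/2}) = \cV_{s+2}$ identifies $\dom(A_s)$, and commutativity in the functional calculus yields
\begin{equation}
A_s f = A^{-s/2} A A^{s/2} f = A^{-s/2} A^{s/2} A f = A f
\end{equation}
on $\dom(A_s)$. The main obstacle is uniqueness, since a priori a competing self-adjoint operator $B$ in $\cH_s$ might have a strictly larger domain. The key observation is that $B f = A f$ must lie in $\cH_s$, i.e., in $\cV_s = \dom(A^{s/2})$, which forces $f \in \dom(A^{1+s/2}) = \cV_{s+2} = \dom(A_s)$; so $\dom(B) \subset \dom(A_s)$ is automatically built into the definition. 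Conjugating back, $T := U_s^{-1} B U_s$ is self-adjoint in $\cH$ with $\dom(T) = A^{s/2} \dom(B)$, and a short calculation shows $T g = A g$ on $\dom(T)$, so $T \subset A$; since a self-adjoint operator admits no proper symmetric extension, $T = A$, and hence $B = A_s$.

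The remaining items follow from the functional calculus. For (iii), any eigenvector $\phi_\alpha$ of $A$ with eigenvalue $\lambda_\alpha \in [c,\infty)$ has spectral measure concentrated at the single point $\lambda_\alpha$, so $\phi_\alpha \in \dom(A^r)$ for every $r \in \bbR$; in particular $\phi_\alpha \in \cV_{s+2}$, $A_s \phi_\alpha = \lambda_\alpha \phi_\alpha$, and completeness transfers via the identity $(f,\phi_\alpha)_s = \lambda_\alpha^{s/2}(A^{s/2}f,\phi_\alpha)_\cH$ together with injectivity of $A^{s/2}$. For (iv), the formula $A_s = A^{-s/2} A A^{s/2}$ combined with the commuting functional calculus gives
\begin{equation}
A^{(s-t)/2} A_s A^{(t-s)/2} = A^{-t/2} A A^{t/2} = A_t
\end{equation}
on $\dom(A_t) = \cV_{t+2}$, with the intermediate domain conditions easily verified by tracking $\cV_{t+2} \to \cV_{s+2} \to \cV_s \to \cV_t$ through the spectral representation.
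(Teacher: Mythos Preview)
Your proof is correct. The paper's own argument (presented as Theorem~2.3, which subsumes Theorem~1.3) proceeds differently: it invokes the spectral theorem in multiplication-operator form to reduce $A$ to the model operator $T$, multiplication by $\lambda$ on $L^2([1,\infty);d\mu)$, so that $\cH_r$ becomes the weighted space $L^2\big([1,\infty);(1+\lambda^r)d\mu\big)$ and $A_r$ is simply the maximally defined multiplication by $\lambda$ there; all five items are then read off by inspection, and the general case is a direct sum of such models.

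Your route is more intrinsic: you never pass to a concrete model but instead work directly with the functional calculus, defining $A_s$ via the unitary conjugation $A_s = U_s A U_s^{-1}$ with $U_s = A^{-s/2}\colon \cH\to\cH_s$. This is exactly the unitary the paper records in Theorem~2.3\,(iii), but you use it as the \emph{starting point} rather than a consequence. The payoff is a leaner argument with no model-operator bookkeeping; the paper's payoff is that everything becomes transparently explicit in weighted $L^2$ spaces. Your uniqueness argument---pulling a competitor $B$ back to $\cH$ via $U_s^{-1}$ to obtain a self-adjoint restriction of $A$, hence equal to $A$---is also more fully articulated than the paper's, which simply observes that the maximally defined multiplication operator in $\cK_r(T)$ is the only self-adjoint one.
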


We note that, before the publication of \cite{LW02}, the papers on left-definite
theory made no\ serious mention of a self-adjoint operator $A_{1}$ in the
first left-definite setting $\cH_{1}$. In fact, in the case of the
first left-definite setting for the Legendre self-adjoint
operator $A_{Leg}$ (see \eqref{1.12}), it was believed (see
\cite{Ev80}) that the self-adjoint operator in $\cH_{1}$,
having the Legendre polynomials as eigenfunctions, was not even a differential
operator.

This left-definite theory has proven useful in solving several problems and in obtaining new characterizations of some specific self-adjoint operator domains. We mention a few examples next:

$\bullet$ The self-adjoint operator $A_{Leg}$ in $L^{2}((-1,1))$, generated by
the second-order Legendre differential expression
\begin{equation}
(\tau_{Leg}f)(x)=-\left(  (1-x^{2}) f^{\prime}(x)\right)^{\prime}  + f(x), \quad x\in(-1,1),
\end{equation}
which has the Legendre polynomals $\{P_{n}\}_{n=0}^{\infty}$ as
eigenfunctions, is classically given by
\begin{align}
\begin{split}
& \dom(A_{Leg})= \big\{g \in L^{2}((-1,1)) \, \big| \,  g, g^{\prime} \in AC_{loc}((-1,1)),\\
& \hspace*{2.4cm}  \tau_{Leg} g \in L^{2}((-1,1)); \, \lim_{x\rightarrow\pm1}(1-x^{2}) g^{\prime}(x)=0\big\}.    \lb{1.12}
\end{split}
\end{align}
A complementary representation of $\dom(A_{Leg})$ (see \cite{ELW02}) follows from a left-definite analysis of $A_{Leg}$:
\begin{align}
\begin{split}
& \dom(A_{Leg})=\{g \in L^{2}((-1,1)) \, \big| \,  g, g^{\prime}\in AC_{loc}((-1,1)); \\
& \hspace*{4.5cm} (1-x^{2}) g^{\prime\prime} \in L^{2}((-1,1))\big\}.    \lb{1.13}
\end{split}
\end{align}
One notices that this last representation of $\dom(A_{Leg})$ does
not involve boundary conditions. Similar new representations have been
determined for each of the other classical orthogonal polynomials.

$\bullet$ In 1980, Everitt \cite{Ev80} asked what the domain of the first
left-definite Legendre operator $A_{1, Leg}$ is. From Theorem \ref{t1.3}, this domain equals $\cV_{3}
(A_{Leg})$, the third left-definite space associated with $A_{Leg}$ in $L^{2}((-1,1))$.  It is explicitly given by
\begin{align}
\begin{split}
& \cV_{3}(A_{Leg}) = \big\{g \in L^{2}((-1,1)) \, \big| \,
g, g^{\prime}, g^{\prime\prime}\in AC_{loc}((-1,1));      \lb{1.14} \\
& \hspace*{4.2cm} (1-x^{2})^{3/2} g^{\prime\prime\prime}\in L^{2}((-1,1))\big\}.
\end{split}
\end{align}
In a follow-up paper, Everitt, Mari\'{c}, and Littlejohn \cite{ELM02} established this representation using standard real-analytic techniques.

$\bullet$ In the Legendre context we emphasize that in 1970, Triebel \cite{Tr70}, \cite{Tr70a} (see also \cite[Ch.~7]{Tr78}) studied in detail domains of fractional powers of the Legendre operator. More precisely, Triebel considered generalized Legendre differential operators in $L^2((a,b); dx)$, $a,b \in \bbR$, 
$a < b$, of the form
\begin{align}
\begin{split} 
A_{Leg,m,k} f = (-1)^m \big(q^k f^{(m)}\big)^{(m)}, \quad f \in \dom(A_{Leg,m,k}) = C^{\infty}([a,b]),&   \\
m, k \in \bbN, \; 1 \leq k \leq m,& 
\end{split} 
\end{align}
where  
\begin{equation}
C^{\infty}([a,b]) = \big\{f \colon [a,b] \to \bbC \, \big| \, f \text{ is $C^{\infty}$ on $[a,b]$}\big\}    
\end{equation}
(actually, Triebel \cite{Tr70}, \cite{Tr70a} also considers the more general case where $1 \leq k \leq 2m-1$, but we will restrict our considerations to $1 \leq k \leq m$), and 
\begin{equation} 
0 < q \in C^{\infty}([a,b]) \, \text{ is real-valued} 
\end{equation} 
and satisfying
\begin{equation}
\lim_{x \downarrow a} \f{q(x)}{x-a} = C_a \in (0,\infty), \quad \lim_{x \uparrow b} \f{q(x)}{b-x} = C_b \in (0,\infty).
\end{equation} 
Of course, the special case 
\begin{equation}
m=k=1 \, \text{ and } \, q_0(x) = (b-x)(x-a), \; x \in (a,b),  
\end{equation}
corresponds to the classical Legendre example on the interval $(a,b)$ and from this point on we will restrict ourselves to this special case. In addition to $A_{Leg,1,1}$ one also introduces the minimal operator in $L^2((a,b))$ 
\begin{equation}
A_{Leg,1,1,min} f = - \big(q_0 f^{\prime}\big)^{\prime}, \quad f \in \dom(A_{Leg,1,1,min}) = C_0^{\infty}((a,b)).
\end{equation}
Then one can show that 
\begin{equation}
0 \leq A_{Leg,1,1} \, \text{ is essentially self-adjoint in $L^2((a,b))$},   
\end{equation}
the domain and form domain of the (self-adjoint) closure of $A_{Leg,1,1}$ are given by 
\begin{align}
& \dom\big(\, \ol{A_{Leg,1,1}} \, \big) = H^2_{2,0} \big((a,b); q_0^2;1\big),    \\
& \dom\Big(\big(\, \ol{A_{Leg,1,1}} \, \big)^{1/2}\Big) = H^1_{1,0} \big((a,b); q_0;1\big),
\end{align} 
see \eqref{1.26} below, and the closure of $A_{Leg,1,1}$ coincides with the Friedrichs extension of $A_{Leg,1,1,min}$ in $L^2((a,b))$, that is, 
\begin{equation}
(A_{Leg,1,1,min})_F = \ol{A_{Leg,1,1}}. 
\end{equation}
Regarding domains of (integer or fractional) powers of $\ol{A_{Leg,1,1}}$, Triebel \cite{Tr70}, \cite{Tr70a} obtains
\begin{equation}
\dom\Big(\big( \, \ol{A_{Leg,1,1}} \, \big)^s\Big) = \begin{cases} 
H^{2s}_{2s,0;0} \big((a,b); q_0^{2s};1\big), & s \in (0,1/2],   \\[1mm] 
H^{2s}_{2s,0} \big((a,b); q_0^{2s};1\big), & s \in [1/2,\infty).
\end{cases}
\end{equation}
Here we employ the following notation for the underlying spaces: In the integer case 
$\sigma \in \bbN$, one has the weighted Sobolev spaces 
\begin{align}
& H^{\sigma}_{\kappa_1,\kappa_2} \big((a,b);q^{\kappa_1};q^{\kappa_2}\big) = \bigg\{ g \in \cD((a,b))^{\prime} 
\, \bigg| \, g \in L^2((a,b)), \, g^{(\sigma)} \in L_{loc}^2((a,b));    \no \\
& \quad \|g\|_{H^{\sigma}_{\kappa_1,\kappa_2} ((a,b);q^{\kappa_1};q^{\kappa_2})}^2 :=\int_a^b dx \, \Big[q(x)^{\kappa_1} \big|g^{(\sigma)}(x)\big|^2 + q(x)^{\kappa_2} |g(x)|^2\Big] < \infty\bigg\},    \no \\
& \hspace*{7.2cm} \sigma \in \bbN, \; \kappa_1, \kappa_2 \in [0,\infty),      \lb{1.26} 
\end{align}
with $\cD((a,b))^{\prime}$ the space of distributions on $(a,b)$. 
In the general case, where $\sigma \in (0,\infty) \backslash \bbN$, upon writing 
\begin{equation}
\sigma = \lfloor \sigma \rfloor + \{\sigma\}, \quad \lfloor \sigma \rfloor \in \bbN, \quad 
\{\sigma\} \in (0,1),    \lb{1.27}
\end{equation}
one obtains the fractional, weighted Sobolev spaces
\begin{align}
& H^{\sigma}_{\kappa_1,\kappa_2} \big((a,b);q^{\kappa_1};q^{\kappa_2}\big) = \bigg\{ g \in \cD((a,b))^{\prime} 
\, \bigg| \, g \in L^2((a,b)), \, g^{(\sigma)} \in L_{loc}^2((a,b));    \no \\
& \quad \|g\|_{H^{\sigma}_{\kappa_1,\kappa_2} ((a,b);q^{\kappa_1};q^{\kappa_2})}^2 :=
\int_a^b dx \int_a^b dx' \f{\big|q(x)^{\kappa_1/2} g^{(\lfloor \sigma \rfloor)}(x) - 
q(x')^{\kappa_1/2} g^{(\lfloor \sigma \rfloor)}(x')\big|^2}{|x-x'|^{1+2 \{\sigma\}}}     \no \\
& \hspace*{4.1cm} + \int_a^b dx \, q(x)^{\kappa_2} |g(x)|^2 < \infty\bigg\},    \no \\
& \hspace*{4.1cm} \sigma \in (0,\infty) \backslash \bbN, \; \kappa_1, \kappa_2 \in [0,\infty).  
\end{align}
Finally,
\begin{equation}
H^{\sigma}_{\kappa_1,\kappa_2;0} \big((a,b);q^{\kappa_1};q^{\kappa_2}\big) = 
\ol{C_0^{\infty}((a,b))}^{\|\dott\|_{H^{\sigma}_{\kappa_1,\kappa_2} ((a,b);q^{\kappa_1};q^{\kappa_2})}}. 
\end{equation}

$\bullet$ Connections have been made between the left-definite operator theory for
the classical orthogonal polynomials and the subject of combinatorics. For
example, integral powers of the second-order Laguerre expression (and the
second-order Hermite expression) involve Stirling numbers of the second kind;
this is a new application of these combinatorial numbers. In the case of the
Jacobi expression (which includes Legendre polynomials), a new set of
combinatorial numbers called the Jacobi--Stirling numbers (which include the
Legendre--Stirling numbers) were discovered. Their properties mirror those of
the Stirling numbers of the second kind, see \cite{ELW00}, \cite{EKLWY07}, \cite{ELW02}, and \cite{LW02}.

$\bullet$ One of the more surprising applications of the left-definite theory
involves the analytic study of the classical Lagrangian symmetric Laguerre differential
expression
\begin{equation}
(\tau_{Lag,\alpha} f)(x):=\frac{1}{x^{\alpha}e^{-x}}\left(  -\left(  x^{\alpha
+1}e^{-x} f^{\prime}(x)\right)  ^{\prime}+x^{\alpha}e^{-x} f(x)\right), \quad \alpha\in \bbR, \; x \in (0,\infty),     \lb{1.15}
\end{equation}
with associated generalized eigenvalue equation
\begin{equation}
(\tau_{Lag,\alpha} f)(x) =(n+1) f(x), \quad \alpha\in\bbR, \; x \in (0,\infty),        \lb{1.16}
\end{equation}
in the special case where $\alpha$ is a negative integer, say, $-\alpha=k\in\mathbb{N}$.
Of course, for any real $\alpha$ and $n\in\mathbb{N}_{0},$ the $n$th degree Laguerre
polynomial $L_{n}^{\alpha}$ is a solution of (\ref{1.16}). Furthermore,
when $\alpha>-1,$ it is well-known that $\{L_{n}^{\alpha}\}_{n=0}^{\infty}$
forms a complete orthogonal set in $L^{2}\big((0,\infty);x^{\alpha}e^{-x}dx\big)$; see
\cite[Ch.~V]{Sz75}. The non-classical case $\alpha<-1$ and $-\alpha
\notin\mathbb{N}$ has also been well-studied \cite{De98}; the appropriate
space in this case is a certain Pontryagin space. In the remaining case,
$-\alpha=k\in\mathbb{N},$ the tail-end Laguerre polynomials $\{L_{n}
^{-k}\}_{n=k}^{\infty}$ form a complete set of eigenfunctions of the
self-adjoint operator in $L^{2}\big((0,\infty);x^{-k}e^{-x}dx\big)$,
\begin{align}
& A_{k}f  =\tau_{Lag,-k} f,    \no \\
& f \in \dom(A_{k}) = \big\{g \in L^{2}\big((0,\infty);x^{-k}e^{-x}dx\big) \, \big| \,
g, g^{\prime} \in AC_{loc}((0,\infty)); \\
& \hspace*{5cm} \tau_{Lag,-k} g \in L^{2} \big((0,\infty);x^{-k}e^{-x}dx\big)\big\}.    \no
\end{align}
Moreover, $A_{k}$ is bounded from below in $L^{2}\big((0,\infty);x^{-k}e^{-x}dx\big)$ by $I_{L^{2}((0,\infty);x^{-k}e^{-x}dx)}$ so, from the point of view of left-definite theory, there is a continuum of
left-definite spaces $\{\cH_{s,k}\}_{s \in (0,\infty)}$ and left-definite operators
$\{A_{s,k}\}_{s \in (0,\infty)}$ associated with the pairs $\big(L^{2}\big((0,\infty);x^{-k}e^{-x}dx\big),A_{k}\big)$.
When $- \alpha = k\in\mathbb{N},$ Kwon and Littlejohn in
\cite{KL95} showed that these Laguerre polynomials
$\{L_{n}^{-k}\}_{n=0}^{\infty}$ form a complete orthogonal set in the Sobolev
space
\begin{align}
\begin{split}
& S_{k}((0,\infty))= \big\{g \in L^{2} \big((0,\infty);e^{-x}dx\big) \, \big| \, g, g^{\prime}
,\ldots,g^{(k-1)} \in AC_{loc}([0,\infty));    \lb{1.18} \\
& \hspace*{7cm} g^{(k)}\in L^{2} \big((0,\infty);e^{-x}dx\big)\big\},
\end{split}
\end{align}
endowed with the inner product
\begin{equation}
(f,g)_{k}:=\sum_{\ell=0}^{k-1} \big\langle w_{k,\ell}, \ol{f^{(\ell)}} g^{(\ell)} \big\rangle
+ \int_{0}^{\infty} e^{-x}dx \, \ol{f^{(k)}(x)} g^{(k)}(x), \quad f, g \in S_{k}((0,\infty)), \lb{1.19}
\end{equation}
where
\begin{equation}
w_{k,\ell}=\dbinom{k}{\ell}\sum_{j=0}^{k-\ell-1}\dbinom{k-\ell-1}{j}\delta^{(j)}_0,
\end{equation}
and $\delta^{(j)}_0$ is the $j$-th derivative of the classic Dirac point measure supported at $0$ defined for appropriate differentiable functions, with the pairing $\langle \dott, \dott \rangle$ given by
\begin{equation}
\big\langle\delta^{(j)}_0,f \big\rangle=(-1)^{j}f^{(j)}(0), \quad j=0,1,\dots \, .
\end{equation}
It is natural to ask if there exists a self-adjoint operator $T_{k}$,
generated by the Laguerre expression $\tau_{Lag,-k}$ in $S_{k}((0,\infty))$, 
having the Laguerre polynomials $\{L_{n}^{-k}\}_{n=0}^{\infty}$ as
eigenfunctions. The answer is yes; Everitt, Littlejohn, and Wellman
\cite{ELW04} constructed this operator using left-definite
theory. They decomposed $S_{k}((0,\infty))$ as
\begin{equation}
S_{k}((0,\infty)) = S_{k,1}((0,\infty)) \oplus S_{k,2}((0,\infty)),
\end{equation}
where
\begin{align}
\begin{split}
& S_{k,1}((0,\infty)) = \big\{g \in S_{k}((0,\infty)) \, \big| \, g^{(j)}(0)=0, \quad j=0,1,\ldots,k-1)\big\} \\
& \quad = \cV_{k,-k} = \big\{g \in L^2\big((0,\infty);x^{-k} e^{-x}dx\big) \, \big| \, g^{(j)} \in AC_{loc}([0,\infty)), \, j =0,\dots,k-1;    \\
& \hspace*{5.8cm} g^{(j)} \in L^2\big((0,\infty);x^{j-k} e^{-x}dx\big), \, j=0,\dots,k\big\}     \\
& \quad \subseteq H^k_0 \big ((0,\infty); e^{-x} dx\big),
\end{split}
\end{align}
and
\begin{equation}
S_{k,2}((0,\infty)) = \big\{g \in S_{k}((0,\infty)) \, \big| \,  g^{(k)}(x)=0\text{ for a.e.~$x\in (0,\infty)$}\big\}.
\end{equation}
For $n\geq k,$ the Laguerre polynomial $L_{n}^{-k}(\dott)$ has the property that
$(L_{n}^{-k})^{(j)}(0)=0$ for $j=0,1,\ldots,k-1$ so, when $n\geq k,$
$L_{n}^{-k} \in S_{k,1}((0,\infty))$. Moreover, it is clear that $\{L_{n}
^{-k}\}_{n=0}^{k-1}\in S_{k,2}((0,\infty))$. It is shown in \cite{ELW04} that the $k$th
\textit{left-definite} operator
\begin{equation}
T_{k,k} f =\tau_{Lag,-k} f, \quad f \in \dom(T_{k,k}) =V_{k+2,-k},
\end{equation}
is self-adjoint in $S_{k,1}((0,\infty))$ and has the Laguerre polynomials
$\{L_{n}^{-k}\}_{n=k}^{\infty}$ as a complete set of eigenfunctions. Here
\begin{align}
& \cV_{k+2,-k} = \big\{g \in L^2\big((0,\infty); x^{-k}e^{-x}dx\big) \, \big| \, g^{(j)} \in AC_{loc}((a,b)), \, j=0,\dots,k+1;      \no \\
& \hspace*{3.8cm} g^{(\ell)} \in L^2\big((0,\infty); x^{\ell-k}e^{-x}dx\big), \, \ell=0,\dots,k+2\big\}.
\end{align}
Moreover, on the finite-dimensional space $S_{k,2}((0,\infty))$, it is clear
that the operator
\begin{equation}
B_{k} f  =\tau_{Lag,-k}f, \quad f \in \dom(B_{k}) =S_{k,2}((0,\infty)),
\end{equation}
is self-adjoint in $S_{k,2}((0,\infty))$ and has the Laguerre polynomials
$\{L_{n}^{-k}\}_{n=0}^{k-1}$ as a complete set of eigenfunctions. Finally, it follows that
\begin{equation}
T_{k}=T_{k,k}\oplus B_{k}
\end{equation}
is self-adjoint in $S_{k}((0,\infty))$ with $\{L_{n}^{-k}\}_{n=0}^{\infty}$ as a
complete set of eigenfunctions.

$\bullet$ For additional details in connection to left-definite theory and orthogonal polynomials, see, for instance, \cite{DHS11}, \cite{Ev80}, \cite{ELM02}, \cite{ELW00}, \cite{ELW04}, \cite{FL21}, \cite{KL95}, \cite{LW13}, \cite{LW16}, \cite{LZ07}, \cite[Ch.~7]{Tr78}, \cite{Vo00}, and \cite[Chs.~5, 12]{Ze05}.

$\bullet$ A mathematical framework for machine learning was developed by Cucker and
Smale in \cite[Remark~4, p.~29]{CS02} and Smale and Zhou in
\cite[Sect.~3]{SZ03} and \cite[Theorem~2]{SZ07}. In
their general setting for learning in a Hilbert space, they construct a
continuum of left-definite Hilbert spaces $\{\mathcal{H}_{r}(A)\}_{r=1}^{2}$ -
which they call interpolation spaces - generated from the unbounded
self-adjoint inverse $A$ of a certain Hilbert--Schmidt operator $A^{-1}$ of integral operator type, 
\begin{equation}
(A^{-1}f)(x)=\int_{X} d\mu(y) \, K(x,y) f(y), \quad f \in L^{2}(X;d\mu).
\end{equation} 
In this setting, $X\subset\mathbb{R}^{n}$ is a compact data input space$,$
$\mu$ is a positive Borel measure on $X,$ and $K(\dott,\dott)$ is a Mercer (i.e., positive definite and square integrable w.r.t. $d\mu$ in both variables) integral kernel. The
first left-definite space is the reproducing kernel Hilbert space for $K.$ In this
framework, solutions of certain Tychonov regularization schemes are shown to
converge to idealized target functions associated with a wide range of
learning problems. If the measure $\mu$ is generated by the marginal
probability distribution for the learning problem, then convergence is given
in terms of probability. Furthermore, in their analysis, the location of the
target function between the first and second left definite spaces determines
convergence rates. These convergence rates are characterized by certain
operator inequalities involving positive powers of $A$. 

In Section \ref{s2} we present the model operator approach to abstract left-definite theory and derive its principal results in an efficient manner. In Section \ref{s3} we present several concrete applications including the scale of Sobolev spaces on $\bbR$ associated with the one-dimensional Laplace operator (the Fourier transform arguments extend to the multi-dimensional case), the self-adjoint operator $A$ in $\ell^2(\bbN)$ given by
\begin{align}
\begin{split}
&(A f)_n = n f_n,  \quad n \in \bbN,   \\
& f \in \dom(A) = \big\{g = \{g_n\}_{n\in \bbN} \in \ell^2(\bbN) \, \big| \, \{n g_n\}_{n\in \bbN} \in \ell^2(\bbN)\big\},     \lb{1.28}
\end{split}
\end{align}
and its associated $s$-th left-definite operators, the periodic Laplacian on $(0,2\pi)$, the one-parameter family of self-adjoint Laplacians on the half-line $(0,\infty)$, and the self-adjoint extensions of the Bessel operator on $(0,\infty)$. Section \ref{s4}, in many ways the {\it pi\`ece de r\'esistance} of this paper, takes a rather detailed look at  Hermite polynomials, the underlying Hermite operator $A_H$ in $L^2\big(\bbR; e^{-x^2} dx\big)$, and the unitarily equivalent harmonic oscillator operator $T_{HO}$ in $L^2(\bbR)$. In particular, employing interpolation techniques for operator domains and certain commutation techniques, we derive explicit expressions for domains of (strictly) positive powers $T_{HO}^s$ (and hence those of $A_H^s$) in terms of fractional Sobolev spaces and domains of $|X|^s$, $s \in (0,\infty)$, with $X$ denoting the operator of multiplication by the independent variable $x \in \bbR$. To the best of our knowledge, these results are new. 

A variety of domains of fractional powers of periodic Laplacians and Laplacians on the half-line are studied in Appendices \ref{sA}--\ref{sC}.

Finally, we briefly summarize some of the notation used in this paper: Let $\cH$ be a separable
complex Hilbert space, $(\cdot,\cdot)_{\cH}$ the scalar product in
$\cH$ (linear in the second factor), and $I_{\cH}$ the identity operator in $\cH$.
Next, let $T$ be a linear operator mapping (a subspace of) a
Banach space into another, with $\dom(T)$
denoting the domain
of $T$.

The spectrum, essential spectrum, point spectrum (i.e., the set of eigenvalues), discrete spectrum, absolutely continuous spectrum, singularly continuous spectrum, and resolvent set of a closed linear operator in $\cH$ will be denoted by
$\sigma(\cdot)$, $\sigma_{ess}(\cdot)$, $\sigma_{p}(\cdot)$, $\sigma_{d}(\cdot)$,
$\sigma_{ac}(\cdot)$, $\sigma_{sc}(\cdot)$, and $\rho(\cdot)$, respectively.

The Banach spaces of bounded and compact linear operators in $\cH$ are denoted by $\cB(\cH)$ and $\cB_\infty(\cH)$, respectively. Similarly, the Schatten--von Neumann (the $\ell^2$-based) trace ideals will subsequently be denoted by $\cB_p(\cH)$, $p \in [1,\infty)$.

We denote by $E_A(\cdot)$ the family of strongly right-continuous spectral projections of a self-adjoint operator $A$ in $\cH$ (in particular, $E_A(\lambda) = E_A((-\infty,\lambda])$, $\lambda \in \bbR$).

To simplify notation, we will write $L^2(\Omega)$ instead of $L^2(\Omega; d^nx)$, where $\Omega \subseteq \bbR^n$, $n \in \bbN$, whenever the underlying Lebesgue measure is understood. \\


\section{A Model Operator Approach to Abstract Left-Definite Problems} \lb{s2}

We briefly summarize the principal results of abstract left-definite theory considered in \cite{LW02}, \cite{LW13} and proceed to demonstrate that the theory permits an elementary model operator approach.

\begin{hypothesis} \lb{h2.1}
Let $A$ be a self-adjoint operator in the complex, separable Hilbert space $\cH = (\cV, (\dott, \dott)_{\cH})$ bounded from below, satisfying $($without loss of generality$)$
\begin{equation}
A \geq I_{\cH}.     \lb{2.1}
\end{equation}
\end{hypothesis}

Given $A$, we introduce the scale of Hilbert spaces $\cH_r(A)$, $r \in [0,\infty)$, introduced via\footnote{For notational simplicity, we identify $\cH$ and $\cH_0(A)$ as well as $\cV_0$ and $\cV_0(A)$, even though $\cH_0(A)$ and $\cV_0(A)$ are independent of $A$.}
\begin{align}
& \cH_0(A) = (\cV_0(A), (\dott,\dott)_{\cH_0})= (\cV, (\dott, \dott)_{\cH}) = \cH,    \no \\
& \cH_r(A) = (\cV_r(A), (\dott,\dott)_{\cH_r(A)})       \lb{2.3} \\
& \hspace*{1cm} = \big(\dom\big(A^{r/2}\big), (\dott,\dott)_{\cH_r(A)} = \big(A^{r/2} \dott, A^{r/2} \dott\big)_{\cH}\big), \quad r \in (0,\infty),    \no
\end{align}
where
\begin{equation}
\cV_r(A) = \dom\big(A^{r/2}\big), \quad r \in [0,\infty).    \lb{2.4}
\end{equation}
In particular,
\begin{equation}
\dom(A) = \cV_2(A).     \lb{2.5}
\end{equation}

Traditionally, $\cH_r(A)$ is also called the {\it $r$th left-definite space}, $r \in (0,\infty)$, associated with the pair $(A, \cH)$, see, for instance, \cite{LW02}, \cite{LW13}, and \cite{LZ07}.

Before we start our analysis, it is convenient to introduce an elementary model operator: Let $\mu$ be a Borel measure on $[1,\infty)$ and consider the maximally defined operator of multiplication by the independent variable in the Hilbert space $\cK = \cK_0 = L^2([1, \infty); d \mu(\lambda))$ denoted by $T$,
\begin{align}
\begin{split}
& (T f)(\lambda) = \lambda f(\lambda) \, \text{ for $\mu$-a.e. } \,  \lambda \in [1,\infty),   \\
& f \in \dom(T) = \bigg\{g \in L^2([1, \infty); d \mu(\lambda)) \, \bigg | \,
\int_{[1,\infty)} d \mu(\lambda) \, \lambda^2 |g(\lambda)|^2 < \infty\bigg\}   \lb{2.6a} \\
& \hspace*{1.85cm} = L^2\big([1, \infty); \big(1+\lambda^2\big) d \mu(\lambda)\big).
\end{split}
\end{align}
The model operator $T$ will play a crucial role in the proof of our principal result, Theorem \ref{t2.3}. In particular, if $A$ has simple spectrum (i.e., $A$ possesses a cyclic vector), and $\inf(\sigma(A))=1$, then $A$ is unitarily equivalent to some positive definite operator $T$ of the type \eqref{2.6a} with $\mu([1,\infty)) < \infty$ (cf.\ \eqref{2.7a} below). For general operators $A$ it suffices to recall the spectral theorem in multiplication operator form and hence refer to Theorem \ref{t2.4}.

\begin{remark} \lb{r2.1a}
Due to its importance in several concrete examples in Sections \ref{s3} and \ref{s4}, we briefly review the situation of self-adjoint operators that have simple spectrum (see, e.g., \cite[Sect.~VII.2]{RS80}, or \cite[Sect.~5.4]{Sc12}): Let $A$ be a self-adjoint operator $A$ in the separable, complex Hilbert space $\cH$ with associated strongly right-continuous family of spectral projections denoted by $E_A(\dott)$, that is, 
\begin{equation} 
A = \int_{\sigma(A)} dE_A(\lambda) \, \lambda. 
\end{equation} 
Then $g_0 \in \bigcap_{n\in\bbN_0} \dom(A^n)$ is called a cyclic (or generating) vector for $A$ if $\cH$ is the smallest reducing subspace for $A$ that contains $g_0$ (equivalently, $\mbox{lin.span} \{A^n g_0 \, | \, n \in \bbN_0\}$ is dense in $\cH$). The operator $A$ is then said to have simple spectrum if it  possesses a cyclic vector $f_0 \in \cH$. Given $f_0 \in \cH$, let
\begin{equation} 
\mu_{0,A}(\dott) = (f_0, A f_0)_{\cH}.     \lb{2.7a}
\end{equation} 
Then $\mu_{0,A}(\dott)$ generates a finite nonnegative Borel measure on $\bbR$. In this context, $A$ is unitarily equivalent to the maximally defined operator of multiplication $M_A$ by the independent variable in $L^2(\bbR; d\mu_{0,A})$ given by
\begin{equation}
(M_A g)(\lambda) = \lambda g(\lambda), \quad g \in \dom(M_A) = L^2\big(\bbR; \big(1+\lambda^2\big) d\mu_{0,A}(\lambda)\big).  
\end{equation}
More precisely, there exists a unitary operator $U_0 : \cH \to L^2(\bbR; d\mu_{0,A})$ (i.e., an isometric isomorphism), satisfying 
\begin{equation} 
(U_0 f_0)(\lambda) =1 \, \text{ for $\mu_{0,A}$-a.e. $\lambda \in \bbR$}, 
\end{equation} 
such that
\begin{equation}
A = U_0 M_A U_0^{-1} \, \text{ and } \, \sigma(A) = \supp (E_A) = \supp (\mu_{0,A}). 
\end{equation}
\end{remark}

\begin{lemma} \lb{l2.2}
Assume Hypothesis \ref{h2.1}. Then
\begin{equation}
\cH_s(A) \subseteq \cH_r(A), \quad 0 \leq r < s,    \lb{2.6}
\end{equation}
and, if $A$ is unbounded, strict containment holds in \eqref{2.6}, that is,
\begin{equation}
\cH_s(A) \subsetneqq \cH_r(A), \quad 0 \leq r < s, \, \text{ $A$ unbounded}.    \lb{2.7}
\end{equation}
\end{lemma}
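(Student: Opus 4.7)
The plan is to reduce both assertions to elementary spectral–integral estimates via the spectral theorem for $A$, exploiting the normalization $A \geq I_\cH$, which guarantees $\supp(E_A) \subseteq [1,\infty)$ and hence the pointwise monotonicity $\lambda^r \leq \lambda^s$ on $\supp(E_A)$ whenever $0 \leq r \leq s$. No appeal to the model operator $T$ of \eqref{2.6a} is needed at this preliminary stage; this lemma should be viewed as preparatory motivation for that construction.

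For the inclusion \eqref{2.6}, I would observe that for every $f \in \cH_s(A) = \dom\big(A^{s/2}\big)$ the identity
\[
\|f\|_{\cH_r(A)}^2 = \int_{[1,\infty)} \lambda^r \, d\|E_A(\lambda) f\|_\cH^2 \leq \int_{[1,\infty)} \lambda^s \, d\|E_A(\lambda) f\|_\cH^2 = \|f\|_{\cH_s(A)}^2 < \infty
\]
immediately yields $f \in \cH_r(A)$, together with the continuous embedding $\|f\|_{\cH_r(A)} \leq \|f\|_{\cH_s(A)}$ for all $f \in \cH_s(A)$.

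For strict containment \eqref{2.7} under the hypothesis that $A$ is unbounded, the plan is to construct an explicit separating element. Since $\sigma(A) \subseteq [1,\infty)$ is now unbounded, one may choose $\lambda_n \in \sigma(A)$ with $\lambda_n \geq 2^n$ for every $n \in \bbN$, together with pairwise disjoint Borel sets $\Delta_n \subseteq [\lambda_n, 2 \lambda_n]$ satisfying $E_A(\Delta_n) \neq 0$ (such $\Delta_n$ exist by the characterization $\lambda \in \sigma(A) \Longleftrightarrow E_A((\lambda - \varepsilon, \lambda + \varepsilon)) \neq 0$ for all $\varepsilon > 0$, after passing to small enough intervals to ensure disjointness). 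Selecting unit vectors $g_n \in \ran(E_A(\Delta_n))$, the spectral theorem yields, for any $\alpha \geq 0$,
\[
\lambda_n^\alpha \leq \|A^{\alpha/2} g_n\|_\cH^2 = \int_{\Delta_n} \lambda^\alpha \, d\|E_A(\lambda) g_n\|_\cH^2 \leq (2 \lambda_n)^\alpha.
\]
Now set $c_n := \lambda_n^{-r/2} n^{-1}$ and $f := \sum_{n \in \bbN} c_n g_n \in \cH$. Since the subspaces $\ran(E_A(\Delta_n))$ are pairwise orthogonal and each reduces $A^{\alpha/2}$, one obtains $\|A^{\alpha/2} f\|_\cH^2 = \sum_{n \in \bbN} c_n^2 \|A^{\alpha/2} g_n\|_\cH^2$; for $\alpha = r$ this is bounded by $2^r \sum_{n \in \bbN} n^{-2} < \infty$, so $f \in \cH_r(A)$, whereas for $\alpha = s$ one has $\|A^{s/2} f\|_\cH^2 \geq \sum_{n \in \bbN} n^{-2} \lambda_n^{s-r} \geq \sum_{n \in \bbN} n^{-2} 2^{n(s-r)} = +\infty$, so $f \notin \cH_s(A)$, producing the required strict inclusion.

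The only mildly subtle point is the geometric construction of the pairs $\{(\lambda_n,\Delta_n)\}_{n \in \bbN}$; unboundedness of $\sigma(A)$ supplies the sequence $\lambda_n \to \infty$, and the spectral-projection characterization of $\sigma(A)$ supplies the disjoint neighborhoods with nontrivial projections. Once this framework is in place, the remainder is a routine weighted-series estimate, so I do not anticipate any serious obstacle.
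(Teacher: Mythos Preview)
Your argument is correct and takes a genuinely different route from the paper's. For the inclusion \eqref{2.6} both proofs invoke the spectral theorem in the same way. For the strict inclusion \eqref{2.7}, however, the paper argues by contradiction: assuming $\dom\big(A^{s/2}\big) = \dom\big(A^{r/2}\big)$, it passes (somewhat informally, since Theorem~\ref{t2.4} comes later) to the model operator $T$ of \eqref{2.6a} and observes that multiplication by $(1+\lambda^s)/(1+\lambda^r)$ would then be an everywhere-defined symmetric operator on $L^2([1,\infty); d\nu_r)$, hence bounded by Hellinger--Toeplitz, contradicting the unbounded support of $\nu_r$. Your approach is instead direct and constructive: you exhibit an explicit $f \in \cH_r(A) \setminus \cH_s(A)$ as a superposition of spectrally localized unit vectors with suitably decaying weights. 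This is more elementary (no Hellinger--Toeplitz, no forward reference to the model operator) and produces a concrete separating element; the paper's argument is terser once one accepts the reduction to $T$.

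One small imprecision worth fixing: the characterization you cite yields nontrivial spectral projections on two-sided intervals $(\lambda_n - \varepsilon, \lambda_n + \varepsilon)$, which need not lie inside $[\lambda_n, 2\lambda_n]$ (the spectral mass near $\lambda_n$ could sit entirely to its left). The cleanest repair is to drop the auxiliary points $\lambda_n$ altogether and take $\Delta_n := [2^n, 2^{n+1})$ directly for those (infinitely many, since $\sigma(A)$ is unbounded) indices $n$ with $E_A\big([2^n,2^{n+1})\big) \neq 0$; disjointness is then automatic, and your estimates go through unchanged with $\lambda_n := 2^n$.
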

\begin{proof}
Employing the spectral theorem for $A$ and $F(A)$ for measurable functions $F$ on $[\varepsilon,\infty)$, particularly, the powers $F_s(\lambda) = \lambda^s$, $\lambda \in [\varepsilon, \infty)$, $s \in (0,\infty)$,
\begin{equation}
A = \int_{\sigma(A)} \lambda \, dE_A(\lambda), \quad F(A) = \int_{\sigma(A)} F(\lambda) \, dE_A(\lambda),
\end{equation}
with $E_A((-\infty,\lambda]) = E_A(\lambda)$, $\lambda \in [\varepsilon, \infty)$, the strongly right-continuous family of spectral projections associated with $A$, one immediately concludes that
\eqref{2.6} holds. To show that strict containment in \eqref{2.7} holds, we argue by contradiction and assume that $\cH_s(A) = \cH_r(A)$ for some $0 \leq r < s$. Thus, $\dom\big(A^{s/2}\big) = \dom\big(A^{r/2}\big)$ for some $0 \leq r < s$ and reverting to the operator $T$ in \eqref{2.6a}, this amounts to assuming $\dom\big(T^{s/2}\big) = \dom\big(T^{r/2}\big)$ for some $0 \leq r < s$. Then
\begin{equation}
\text{$f \in \dom\big(T^{r/2}\big)$ is equivalent to $f \in L^2([1,\infty); d\nu_r)$},    \lb{2.10a}
\end{equation}
where $d\nu_r = \big(1 + |\dott|^r\big) d\mu$, and
\begin{equation}
\text{$f \in \dom\big(T^{s/2}\big)$ is equivalent to
$f \in L^2\big(\big[\big(1+ \lambda^s\big)\big/\big(1+ \lambda^r\big)\big] d\nu_r\big)$}.    \lb{2.11a}
\end{equation}
The facts \eqref{2.10a} and \eqref{2.11a} imply that the operator of multiplication by the function
$\big(1+ \lambda^s\big)\big/\big(1+ \lambda^r\big)$ defines a symmetric and everywhere defined operator in $L^2([1,\infty); d\nu_r)$. By the Hellinger--Toeplitz theorem (see, e.g., \cite[Theorem~5.7]{We80}), this operator would have to be bounded, contradicting $s > r$ and the fact that $d\nu_r$ has unbounded support.
\end{proof}

Before describing the principal result on left-definite spaces and operators, we pause for a moment and recall the introduction of the general scale of Hilbert spaces associated with a self-adjoint operator $A$ satisfying Hypothesis \ref{h2.1}. $\cH_r(A)$, for $r \in \bbR$, can be defined via
\begin{equation}
\cH_r(A) = \ol{\bigg(\cD = \bigcap_{n \in \bbN} \dom\big(A^n\big); \, \|u\|_{\cH_r(A)}
= \big\|A^{r/2} u\big\|_{\cH}, \, u \in \cD \bigg)},
\quad r \in \bbR.     \lb{2.9}
\end{equation}
(Of course, $\cH_0(A) = \cH$ and \eqref{2.3} and \eqref{2.9} coincide for $r \in (0,\infty)$.)

One then obtains the dense and continuous embeddings,
\begin{align}
& \cH_{r_2}(A) \hookrightarrow \cH_{r_1}(A) \hookrightarrow \cH_0(A) = \cH = \cH^* = \cH_0(A)^*      \\
& \quad \hookrightarrow \cH_{r_1}(A)^* = \cH_{-r_1}(A) \hookrightarrow \cH_{r_2}(A)^* = \cH_{-r_2}(A), \quad r_1, r_2 \in (0,\infty), \; r_1 < r_2.    \no
\end{align}

One also observes that $A^{r/2}$, $r \in \bbR$, extends by continuity to an isometric map, denoted by $\big(\wti A\big)^{r/2}$, from $\cH_s(A)$ onto $\cH_{s-r}(A)$, $r,s \in \bbR$, that is,
\begin{align}
& \big(\wti A\big)^{r/2} \colon \begin{cases} \cH_s(A) \to \cH_{s-r}(A), \\
u \mapsto \big(\wti A\big)^{r/2} u,
\end{cases}  \text{ is an isometric isomorphism}      \lb{2.11}  \\
& \quad \text{(i.e., a unitary map), } \, r, s \in \bbR.    \no
\end{align}

The duality pairing between $\cH_r(A)$ and $\cH_{-r}(A)$, $r \in (0,\infty)$, is given by
\begin{align}
\begin{split}
{}_{\cH_r(A)}\langle u, \ell \rangle_{\cH_{-r}(A)} = \Big(\big(\wti A\big)^{r/2} u,\big(\wti A\big)^{-r/2} \ell \Big)_{\cH},& \\
u \in \cH_r(A), \; \ell \in \cH_{-r}(A), \; r \in (0,\infty),&
\end{split}
\end{align}
and
\begin{align}
& |{}_{\cH_r(A)}\langle u, \ell \rangle_{\cH_{-r}(A)}| \leq \|u\|_{\cH_r(A)} \|\ell\|_{\cH_{-r}(A)}, \quad u \in \cH_r(A), \; \ell \in \cH_{-r}(A),  \; r \in (0,\infty),    \no \\
& {}_{\cH_r(A)}\langle u, f \rangle_{\cH_{-r}(A)} = (u,f)_{\cH}, \quad u \in \cH_r(A), \; f \in \cH, \; r \in (0,\infty),   \\
& \| \dott \|_{\cH_{-r}(A)} \leq \| \dott \|_{\cH} \leq \| \dott \|_{\cH_r(A)}, \quad r \in (0,\infty).   \no
\end{align}

Returning to the situation $r \in [0,\infty)$, the principal abstract results in \cite{LW02}, \cite{LW13} then can be summarized as follows:

\begin{theorem} \lb{t2.3}
Assume Hypothesis \ref{h2.1}. Then the following items $(i)$ and $(ii)$ hold: \\[1mm]
$(i)$ There exists a unique operator $A_r$ in $\cH_r(A)$ satisfying
\begin{align}
& \text{$A_r$ on $\dom(A_r) = \dom\big(A^{(r+2)/2}\big) = \cV_{r+2}(A)$ is self-adjoint in $\cH_r(A)$, $r \in [0,\infty)$,}    \no \\
& A_0=A,   \\
& A_r = A\big|_{\cV_{r+2}(A)}, \; r \in [0,\infty).   \no
\end{align}
In particular,
\begin{align}
& \cV_{r+2}(A) = \dom(A_r) \subseteq \dom(A) = \cV_2(A), \quad r \in [0,\infty),    \lb{2.19} \\
& \cV_{s+2}(A) = \dom(A_s) \subseteq \dom(A_r) = \cV_{r+2}(A), \quad 0 \leq r < s,    \lb{2.20}
\end{align}
and the inclusions in \eqref{2.19} and \eqref{2.20} are strict if $A$ is unbounded, that is,
\begin{align}
\begin{split} 
& \cV_{r+2}(A) = \dom(A_r) \subsetneqq \dom(A) = \cV_2(A), \quad r \in (0,\infty),  \, \text{ $A$ unbounded},  \\
& \cV_{s+2}(A) = \dom(A_s) \subsetneqq \dom(A_r) = \cV_{r+2}(A), \quad 0 \leq r < s,
\, \text{ $A$ unbounded}.
\end{split} 
\end{align}
$(ii)$ If $A \in \cB(\cH)$, then the norms induced by $(\dott,\dott)_{\cH_r(A)}$ and $(\dott,\dott)_{\cH_s(A)}$, $r,s \in [0,\infty)$ $($in particular, those of $(\dott,\dott)_{\cH_r(A)}$, $r \in [0,\infty)$, and $(\dott,\dott)_{\cH}$$)$ are equivalent, and hence,
\begin{equation}
\cV_r = \cV, \quad \cH_r(A)=\cH, \quad A_r =A, \quad r \in [0,\infty).
\end{equation}
$(iii)$ For $r,s,t \in [0,\infty)$,
\begin{equation}
A^{(r-s)/2} \colon \begin{cases} \cH_{r+t}(A) \to \cH_{s+t}(A), \\
u \mapsto A^{(r-s)/2} u,
\end{cases}
\end{equation}
is an isometric isomorphism $($i.e., a unitary operator\,$)$  from $\cH_{r+t}(A)$ onto $\cH_{s+t}(A)$ and
\begin{align}
& A^{(r-s)/2} A_r A^{-(r-s)/2} = A_s, \quad r, s \in [0,\infty),   \\
& \cV_{s+2}(A) = \dom(A_s) = A^{(r-s)/2} \dom(A_r) = A^{(r-s)/2} \cV_{r+2}(A), \quad r,s \in [0,\infty).  \no 
\end{align}
Thus, $A_r$ in $\cH_r$ and $A_s$ in $\cH_s$ are unitarily equivalent for all $r, s \in [0,\infty)$, in particular, $A_r$ in $\cH_r$ and $A$ in $\cH$ are unitarily equivalent for all $r \in [0,\infty)$. As a consequence, all unitary invariants $($cf.\ Remark \ref{r2.5}\,$(ii)$$)$ of $A_r$ and $A_s$,  $r, s \in [0,\infty)$, and hence all unitary invariants of $A_r$, $r \in [0,\infty)$, and $A$, coincide.
\end{theorem}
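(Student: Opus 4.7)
The plan is to reduce the entire statement to an elementary computation in the multiplication-operator realization of $A$. By the spectral theorem in multiplication-operator form there exist a $\sigma$-finite measure space $(X,d\mu)$, a measurable function $a : X \to [1,\infty)$ (the lower bound encoding \eqref{2.1}), and a unitary $U : \cH \to L^2(X;d\mu)$ such that $U A U^{-1}$ is multiplication by $a$. Since the scale spaces, the fractional powers $A^{r/2}$, and the candidate operators $A_r$ are all built from the Borel functional calculus of $A$, $U$ transports every object on the $\cH$-side isometrically to its pointwise counterpart on $L^2(X;d\mu)$; I therefore identify $A$ with multiplication by $a$.

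Under this identification, $A^{r/2}$ is multiplication by $a^{r/2}$, and because $a \geq 1$,
\[
\cH_r(A) = L^2\big(X; a^r d\mu\big), \quad (f,g)_{\cH_r(A)} = \int_X a^r\,\overline{f}\,g\,d\mu, \quad r \in [0,\infty).
\]
For part $(i)$, I define $A_r$ as multiplication by $a$ on $\cH_r(A)$ with maximal domain
\[
\dom(A_r) = \big\{f \in L^2(a^r d\mu) \,\big|\, a f \in L^2(a^r d\mu)\big\} = L^2\big(a^{r+2} d\mu\big) = \cV_{r+2}(A).
\]
Self-adjointness of $A_r$ in $\cH_r(A)$ is the standard fact that multiplication by a real measurable function on an $L^2$-space with respect to a $\sigma$-finite measure is self-adjoint on its maximal domain (symmetry is immediate, and the adjoint computation forces the domain to agree by a pointwise argument). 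Since $\cV_{r+2}(A) \subseteq \cV_2(A) = \dom(A)$ and both operators act as multiplication by $a$, the identities $A_r = A|_{\cV_{r+2}(A)}$ and $A_0 = A$ follow; uniqueness is automatic, as any self-adjoint operator in $\cH_r(A)$ satisfying these conditions must coincide with this maximal multiplicative operator. The strict inclusions in the unbounded case are then exactly Lemma \ref{l2.2} applied to the exponent pairs $(2,r+2)$ and $(r+2,s+2)$.

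Part $(ii)$ is immediate: boundedness of $A$ gives $1 \leq a \leq C$ $\mu$-a.e., whence $\|f\|_{\cH}^2 \leq \|f\|_{\cH_r(A)}^2 \leq C^r \|f\|_{\cH}^2$, which forces $\cV_r(A) = \cV$, equivalence of all scale norms with $(\dott,\dott)_{\cH}$, and $A_r = A$. For part $(iii)$, $A^{(r-s)/2}$ becomes multiplication by $a^{(r-s)/2}$ and the pointwise weight identity
\[
\int_X a^{s+t}\,\big|a^{(r-s)/2} f\big|^2 d\mu = \int_X a^{r+t}\,|f|^2 d\mu
\]
shows that $A^{(r-s)/2}: \cH_{r+t}(A) \to \cH_{s+t}(A)$ is an isometry; bijectivity follows since multiplication by $a^{(s-r)/2}$ serves as its inverse. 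The conjugation identity $A^{(r-s)/2} A_r A^{-(r-s)/2} = A_s$ reduces pointwise to $a^{(r-s)/2}\cdot a \cdot a^{-(r-s)/2} = a$, and the matching domain identity $A^{(r-s)/2}\cV_{r+2}(A) = \cV_{s+2}(A)$ is again a weight-change at the measure level. Setting $s = t = 0$ then yields unitary equivalence of $A_r$ with $A$ for every $r \in [0,\infty)$, from which preservation of every unitary invariant is automatic.

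The only genuinely substantive step is the initial transition to the multiplication-operator model: one must verify that the abstractly defined scale $\{\cH_r(A)\}_{r \in [0,\infty)}$ and the functional-calculus powers $A^{r/2}$ are transported by $U$ precisely to the weighted $L^2$-spaces and multiplication operators announced above, so that the candidate domains $\cV_{r+2}(A)$ genuinely carry the prescribed structure. Once this translation is firmly in place, every remaining claim of Theorem \ref{t2.3} reduces to an elementary measure-theoretic manipulation of the weight $a^r$, with no closability, convergence, or boundary issue to resolve.
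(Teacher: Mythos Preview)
Your proof is correct and follows essentially the same strategy as the paper: reduce everything to the multiplication-operator model via the spectral theorem, where the scale spaces become weighted $L^2$-spaces and all claims are pointwise weight identities. The only cosmetic difference is that the paper invokes the direct-sum form of the spectral theorem (Theorem~\ref{t2.4}, multiplication by $\lambda$ on $\bigoplus_n L^2(\bbR;d\mu_n)$), whereas you use the equivalent single-space form (multiplication by a general measurable $a$ on one $L^2(X;d\mu)$); your formulation is arguably slightly cleaner for this purpose, and your write-up is more explicit than the paper's, which largely asserts that the model-operator case is ``evident.''
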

\begin{proof}
To prove Theorem \ref{t2.3}, it suffices to recall the simple model operator $T$ in \eqref{2.6a}, for which all properties asserted in Theorem \ref{t2.3} become obvious. Indeed, identifying
\begin{align}
& A \, \text{ and } \, T,   \no \\
& \cH_r(A), \; \cH_0(A) = \cH, \,  \text{ and } \, \cK_r(T)
= L^2\big([1,\infty); \big(1+\lambda^r\big) d\mu(\lambda)\big), \; \cK_0 (T) = \cK,     \no \\   
& \hspace*{9cm} r \in [0,\infty),     \\
& A_r \, \text{ and } \,
T_r = T\big|_{L^2([1,\infty); (1+\lambda^{r+2}) d\mu(\lambda))}, \quad r \in [0,\infty),     \no 
\end{align}
all assertions in Theorem \ref{t2.3} are evident for $T$. In particular, since
\begin{equation}
T\big|_{\cK_{r+2}(T)} \cK_{r+2}(T) = T_r \cK_{r+2}(T) = \cK_r(T),
\end{equation}
which follows from the fact that $T f \in L^2\big([1,\infty); \big(1+\lambda^r\big) d\mu(\lambda)\big)$ if and only if $f \in L^2\big([1,\infty); \big(1+\lambda^{r+2}\big) d\mu(\lambda)\big)$ (one recalls that $(Tf)(\lambda) = \lambda f(\lambda)$ for $f \in L^2\big([1,\infty); \big(1+\lambda^2\big) d\mu(\lambda)\big)$ by \eqref{2.6a}). Thus, $T_r$ is the maximally defined operator of multiplication by the independent variable $\lambda$ in $\cK_r(T)$, and hence self-adjoint.

Explicitly, 
\begin{align}
& \cK_r(T) = \big(\cW_r(T), (\dott,\dott)_{\cK_r(T)}\big)
= \big(L^2\big([1, \infty); \big(1+\lambda^r\big) d \mu(\lambda)\big),
\big(T^{r/2} \dott, T^{r/2} \dott \big)_{\cK}\big), \no \\
& \hspace*{9.4cm} r \in [0,\infty).
\end{align}

If $A$ in Theorem \ref{t2.3} has simple spectrum, then, as mentioned in Remark \ref{r2.1a}, $A$ is unitarily equivalent to some positive definite operator $T$ of the type \eqref{2.6a} and hence Theorem \ref{t2.3} follows in this particular case. For general operators $A$ it suffices to recall the spectral theorem in multiplication operator form in Theorem \ref{t2.4} below, and consider direct sums of operators of the type $T$.
\end{proof}

\begin{remark} \lb{r2.4}
$(i)$ The proof of Theorem \ref{t2.3} demonstrates that if $A$ has simple spectrum, then up to unitary equivalence, $A$ acts as the maximally defined operator of multiplication by the independent variable $\lambda$ in the polynomially weighted $L^2$-space $L^2\big([1,\infty); \big(1+\lambda^2\big) d\mu(\lambda)\big)$ with a finite Borel measure $\mu$, $\mu([1,\infty)) < \infty$. Consequently, in the particular case of simple spectrum of $A$, Theorem \ref{t2.3} then reduces to elementary properties of the polynomially weighted spaces $L^2\big([1,\infty); \big(1+\lambda^r\big) d\mu(\lambda)\big)$, $r \in [0,\infty)$. The general case becomes a direct sum of these simple cases upon alluding to Theorem \ref{t2.4} below. Either way, the spectral theorem in multiplication operator form does the work in connection with Theorem \ref{t2.3}. \\[1mm]
$(ii)$ In this context one might ask what are simple and nontrivial examples of self-adjoint operators with simple spectrum? An interesting class consists of three-coefficient Sturm--Liouville operators on an arbitrary interval $(a,b) \subseteq \bbR$, with associated differential expressions $\tau$ in the limit point case at $a$ and/or $b$ such that the corresponding minimal operator $T_{min}$ in $L^2((a,b);rdx)$ is bounded from below (this assumption can be relaxed) and one, and hence all, self-adjoint extensions of $T_{min}$ in $L^2((a,b);rdx)$ have purely discrete spectrum (equivalently, empty essential spectrum). Here $\tau$ is of the form 
\begin{equation}
\tau = \f{1}{r(x)} \bigg[- \f{d}{dx} p(x) \f{d}{dx} + q(x)\bigg] \, \text{ for a.e.~$x \in (a,b)$}, 
\end{equation}
with 
\begin{align}
\begin{split} 
& p, r > 0 \, \text{ a.e.~on $(a,b)$}, \quad q \, \text{ real-valued a.e.~on $(a,b)$},     \\
& (1/p), q, r \in L^1_{loc}((a,b); dx),
\end{split} 
\end{align}
and the minimal operator associated with $\tau$ is given by 
\begin{align}
& (T_{min} f)(x) = (\tau f)(x) \, \text{ for a.e.~$x \in (a,b)$},   \no \\
& f \in \dom(T_{min}) = \big\{g \in L^2((a,b); rdx) \, \big| \, g, g' \in AC_{loc}((a,b)); \\
& \hspace*{3.25cm} \supp(g) \text{ compact}; \, \tau g \in L^2((a,b); rdx)\big\}.    \no 
\end{align}
In this context one observes that in a situation where $\tau$ is in the limit circle case at $a$ and in the limit point case at $b$ (a frequently encountered half-line case), the measure $\mu_{0,T}$ in \eqref{2.7a}, with $T$ a self-adjoint extension of $T_{min}$ (all self-adjoint extensions of $T_{min}$ have simple spectrum), corresponds to the measure in the Nevanlinna--Herglotz representation associated with the Weyl--Titchmarsh $m$-function of $T$.

Naturally, this class of Sturm--Liouville operators is relevant in the context of certain orthogonal polynomials, for instance in the case of Hermite polynomials discussed in Section \ref{s4}. 
\\[1mm]
$(iii)$ Unitary equivalence of $A$ and $A_r$, $r \in (0,\infty)$, in Theorem \ref{t2.3}\,$(iii)$ answers (an enhanced version of) \cite[Conjecture~3, p.~78]{FL21} in the affirmative.
\hfill $\diamond$
\end{remark}

For the following version of the spectral theorem for self-adjoint operators, see, for instance,  \cite[Sects.~VII.2, VII.3, VIII.3]{RS80}, \cite[Sects.~5.1--5.4]{Si15}, and \cite[Sect.~7.3]{We80}):

\begin{theorem} \lb{t2.4}
Given a self-adjoint operator $A$ in the complex, separable Hilbert space $\cH$, there exist finite Borel measures $\mu_n$, $1 \leq n \leq N$, $N \in \bbN \cup \{\infty\}$ on $\sigma(A)$, and a unitary operator
\begin{equation}
U \colon \cH \to \bigoplus_{n=1}^N L^2(\bbR; d \mu_n(\lambda))
\end{equation}
such that
\begin{equation}
\big(UAU^{-1} \psi\big)_n(\lambda) = \lambda \psi_n(\lambda), \quad U^{-1} \psi \in \dom(A), \; 1 \leq n \leq N.
\end{equation}
Here $\psi \in \bigoplus_{n=1}^N L^2\big(\bbR; (1+\lambda^2)d \mu_n(\lambda)\big)$ is represented as
$\psi (\dott) = (\psi_1(\dott), \dots, \psi_N(\dott))$. \\
While $U$ and $\mu_n$, $1 \leq n \leq N$, are nonunique, one has
\begin{equation}
\supp(\mu_n) \subseteq \sigma(A), \quad 1 \leq n \leq N.
\end{equation}
\end{theorem}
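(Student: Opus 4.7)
The plan is to establish the spectral theorem in two stages: (a) construct the projection-valued spectral measure $E_A$ associated with $A$, and (b) decompose $\cH$ into a countable orthogonal sum of cyclic subspaces, on each of which $A$ acts as multiplication by the independent variable against a scalar spectral measure.

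For stage (a), I would pass through the Cayley transform $V = (A - iI_\cH)(A + iI_\cH)^{-1}$, which is unitary on $\cH$ with $1 \notin \sigma_p(V)$. Applying the spectral theorem for bounded normal operators (e.g., via the continuous functional calculus furnished by the Gelfand representation of the commutative $C^*$-algebra generated by $V$ and $V^*$, followed by the Riesz representation theorem to produce the scalar spectral measures) yields a spectral measure $E_V$ supported on $\sigma(V) \subseteq \partial \bbD$. Pulling back via the Cayley map $\lambda \mapsto (\lambda - i)/(\lambda + i)$ produces a projection-valued measure $E_A$ on $\bbR$ with $\supp(E_A) = \sigma(A)$ and $A = \int_{\sigma(A)} \lambda \, dE_A(\lambda)$ in the strong sense on $\dom(A)$.

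For stage (b), for each $\phi \in \cH$ define the finite Borel measure $\mu_\phi(\Omega) = (\phi, E_A(\Omega)\phi)_\cH$, supported in $\sigma(A)$. The cyclic subspace $\cH_\phi = \overline{\{f(A)\phi : f \in L^\infty(\bbR; d\mu_\phi)\}}$ carries a natural map $U_\phi \colon L^2(\bbR; d\mu_\phi) \to \cH_\phi$, $f \mapsto f(A)\phi$, which is isometric by the defining property of $\mu_\phi$ and surjective by density, hence unitary, and intertwines $A|_{\cH_\phi}$ with multiplication by $\lambda$. To decompose all of $\cH$, separability lets me pick a countable dense set $\{e_k\}_{k \in \bbN}$ and proceed inductively: having produced mutually orthogonal cyclic subspaces $\cH_{\phi_1}, \dots, \cH_{\phi_n}$, let $\phi_{n+1}$ be the orthogonal projection of the first $e_k$ not already in the direct sum onto $\big(\bigoplus_{j=1}^n \cH_{\phi_j}\big)^{\perp}$, and set $\cH_{n+1} = \cH_{\phi_{n+1}}$. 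Mutual orthogonality rests on invariance of each $\cH_{\phi_j}$ under bounded Borel functions of $A$ (in particular under $E_A(\Omega)$), and the procedure exhausts $\cH$ because every $e_k$ is eventually captured.

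Setting $\mu_n = \mu_{\phi_n}$ and $U = \bigoplus_n U_{\phi_n}^{-1}$ furnishes the desired unitary equivalence, with the intertwining identity and the inclusion $\supp(\mu_n) \subseteq \sigma(A)$ transferring directly from the single-cyclic case. The main obstacle is stage (a), namely manufacturing a functional calculus for an unbounded operator; this is where the real work resides, and it is what forces the detour through the Cayley transform. The one technical subtlety in stage (b) is that to restrict the unbounded $A$ to each $\cH_{\phi_n}$, I need invariance of $\cH_{\phi_n}$ under $A$ itself rather than merely under bounded functions of $A$; this is settled by observing that the bounded operators $(A \pm iI_\cH)^{-1} \in \cB(\cH)$ leave $\cH_{\phi_n}$ invariant, so any $f \in \cH_{\phi_n} \cap \dom(A)$ automatically satisfies $Af \in \cH_{\phi_n}$.
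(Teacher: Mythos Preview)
The paper does not supply its own proof of Theorem~\ref{t2.4}; it simply records the statement and refers the reader to standard sources (Reed--Simon \cite[Sects.~VII.2, VII.3, VIII.3]{RS80}, Simon \cite[Sects.~5.1--5.4]{Si15}, Weidmann \cite[Sect.~7.3]{We80}). Your outline is correct and follows precisely the route taken in those references, particularly Reed--Simon: reduce to the bounded case via the Cayley transform, invoke the functional calculus for unitary operators to obtain the projection-valued measure, then decompose $\cH$ into cyclic subspaces using separability. There is nothing to compare against in the paper itself, and nothing in your sketch that needs repair.
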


\begin{remark} \lb{r2.5}
$(i)$ A comparison of \eqref{2.11} and the results in Theorem \ref{t2.3} yield
\begin{align}
& \wti A\big|_{\cH_{r+2}(A)} = A\big|_{\cH_{r+2}(A)} = A_r \colon \begin{cases}
\cH_r(A) \supseteq \dom(A_r) = \cH_{r+2}(A) \to \cH_r(A), \\
u \mapsto A u,
\end{cases}    \no  \\
& \hspace*{9.3cm}	 r \in [0,\infty),    \lb{2.32}
\end{align}
the inclusion (i.e., $\subseteq$) in \eqref{2.32} being strict (i.e., $\subset$) if $A$ is unbounded. Similarly, one verifies that
$\big(\wti A\big)^{s/2} \colon \cH_{r+s}(A) \to \cH_r(A)$ and $A^{s/2} \colon \cH_{r+s}(A) \to \cH_r(A)$,
$r,s \in [0,\infty)$, represent the same unitary map. \\[1mm]
$(ii)$ Perhaps the most remarkable property of the left-definite theory summarized in Theorem \ref{t2.3} concerns the unitary equivalence of $A$, $A_r$, and $A_s$ for all $r, s \in [0,\infty)$ which was also established in \cite{LW13}. A complete set of unitary invariants for a self-adjoint operator $B$ in some complex, separable Hilbert space includes, in addition to the spectral parts of uniform multiplicity
$\ell$, with $1 \leq \ell \leq N \in \bbN \cup \{\infty\}$, for instance,
\begin{equation}
\sigma(B), \quad \sigma_{ess}(B), \quad \sigma_{p}(B), \quad \sigma_{d}(B), \quad \sigma_{ac}(B), \quad \sigma_{sc}(B), \quad \rho(B).
\end{equation}
$(iii)$ Since $A \geq I_{\cH}$, $A^{-s} \in \cB(\cH)$ for $s \in [0,\infty)$, and thus by Theorem \ref{t2.3}\,$(ii)$, the situation $r \in (-\infty,0)$ is not of interest in our present context.
\hfill $\diamond$
\end{remark}

\begin{remark} \lb{r2.7} The concrete example of the closed, densely defined, symmetric operator $T_{min}$ in $L^2((0,1))$ given by  
\begin{align}
\begin{split}
& T_{min} f = - f'',      \\
& f \in \dom(T_{min}) = \big\{g \in H^2((0,1)) \, \big| \, g(0)=g'(0)=g(1)=g'(1)=0\},
\end{split}
\end{align}
with associated Friedrichs extension $T_F$ in $L^2((0,1))$ ,
\begin{align}
\begin{split}
& T_F f = - f'',      \\
& f \in \dom(T_F) = \big\{g \in H^2((0,1)) \, \big| \, g(0)=g(1)=0\},
\end{split}
\end{align}
and their respective squares,
\begin{align}
& (T_F)^2 f = f^{(4)},      \\
& f \in \dom\big(T_F^2\big) = \big\{g \in H^4((0,1)) \, \big| \, g(0)=g''(0)=g(1)=g''(1)=0\},  \no
\end{align}
and
\begin{align}
& (T_{min})^2 f =  f^{(4)},      \\
& f \in \dom\big((T_{min})^2\big) = \big\{g \in H^4((0,1)) \, \big| \, g(0)=g'(0)=g''(0)=g'''(0)=0;
\no \\
& \hspace*{6.05cm} g(1)=g'(1)=g''(1)=g'''(1)=0\big\},  \no
\end{align}
and finally, the Friedrichs extension $\big((T_{min})^2\big)_F$ of $(T_{min})^2$ in $L^2((0,1))$,
\begin{align}
& \big((T_{min})^2\big)_F f =  f^{(4)},      \\
& f \in \dom\big(\big(T_{min}^2\big)_F\big) = \big\{g \in H^4((0,1)) \, \big| \, g(0)=g'(0)=g(1)=g'(1)=0\big\},  \no
\end{align}
yields
\begin{equation}
\dom\big((T_F)^2\big) \neq \dom\big(\big((T_{min})^2\big)_F\big)
\end{equation}
providing a counterexample to \cite[Conjecture~2, p.~78]{FL21}.
\hfill $\diamond$
\end{remark}

\section{Some Applications} \lb{s3}

We illustrate the abstract approach to left-definite theory in Section \ref{s2} with the help of a few examples.

\begin{example} [The scale of Sobolev spaces on $\bbR$] \lb{e3.1}
Introducing
\begin{equation}
L^2_s(\bbR) = L^2 \Big(\bbR; \big(1 + |\xi|^2\big)^s d\xi\Big), \quad s \in \bbR,
\end{equation}
and identifying,
\begin{equation}
L^2_0(\bbR) = L^2 (\bbR) = \big(L^2 (\bbR)\big)^* = \big(L^2_0(\bbR)\big)^*,
\end{equation}
one gets the chain of Hilbert spaces with respect to $L^2_0(\bbR) = L^2 (\bbR)$,
\begin{equation}
L^2_s(\bbR) \subset L^2 (\bbR) \subset L^2_{-s} (\bbR) = \big(L^2_s (\bbR)\big)^*, \quad s \in (0,\infty).
\end{equation}
Next, one introduces the maximally defined operator $G_0$ of multiplication by the function
$\big(1 + |\dott|^2\big)^{1/2}$ in $L^2 (\bbR)$,
\begin{align}
\begin{split}
& (G_0 f)(\xi) = \big(1 + |\xi|^2\big)^{1/2} f(\xi), \\
& \, f \in \dom(G_0) = \Big\{ g \in L^2 (\bbR) \, \Big| \,
\big(1 + |\dott|^2\big)^{1/2} g \in L^2 (\bbR)\Big\}.
\end{split}
\end{align}
The operator $G_0$ extends to an operator defined on the entire scale $L^2_s(\bbR)$,
$s \in \bbR$, denoted by $\wti G_0$, such that
\begin{equation} \lb{3.5}
\wti G_0 : L^2_s(\bbR) \to L^2_{s-1}(\bbR), \quad
\big(\wti G_0\big)^{-1} : L^2_s(\bbR) \to L^2_{s+1}(\bbR), \, \text{ bijectively, } \, \; s \in \bbR.
\end{equation}
While
\begin{equation}
I: L^2 (\bbR) \to \big(L^2 (\bbR)\big)^* = L^2 (\bbR)
\end{equation}
represents the standard identification operator between $L^2_0(\bbR) = L^2 (\bbR)$ and its adjoint space,
$\big(L^2 (\bbR)\big)^* = \big(L^2_0(\bbR)\big)^*$, via the Riesz lemma $($thus, $I = I_{L^2 (\bbR)}$$)$, and one does not identify
$\big(L^2_s (\bbR)\big)^*$ with $L^2_s(\bbR)$ when $s>0$. In fact, it is the operator $\big(\wti G_0\big)^2$ that provides a unitary map
\begin{equation}
\big(\wti G_0\big)^2 : L^2_s(\bbR) \to L^2_{s-2}(\bbR), \quad s \in \bbR.
\end{equation}
In particular,
\begin{equation}
\big(\wti G_0\big)^2 : L^2_1(\bbR) \to L^2_{-1}(\bbR) = \big(L^2_1(\bbR)\big)^* \, \text{ is a unitary map}.
\end{equation}
More generally,
\begin{equation}
\big(\wti G_0\big)^r : L^2_s(\bbR) \to L^2_{s-r}(\bbR) \, \text{ is a unitary map, } \; r, s \in \bbR.
\end{equation}

Denoting the Fourier transform on $L^2 (\bbR)$ by $\cF$, and then extending it to the entire scale
$L^2_s(\bbR)$, $s \in \bbR$, more generally, to $\cS'(\bbR)$ by $\wti \cF$ $($with
$\wti \cF : \cS'(\bbR) \to \cS'(\bbR)$ a homeomorphism\,$)$, one obtains the scale of Sobolev
spaces via
\begin{equation} \lb{3.10}
H^s(\bbR) = \wti \cF L^2_s(\bbR), \quad s \in \bbR, \quad L^2(\bbR) = \cF L^2 (\bbR),
\end{equation}
and hence,
\begin{align}
\cF G_0 \cF^{-1} &= (T_0 + I)^{1/2} : H^1(\bbR) \to L^2(\bbR), \, \text{ bijectively, }     \no \\
\wti \cF \wti G_0 \wti \cF^{-1} &= \big(\wti T_0 + \wti I \big)^{1/2} : H^s(\bbR) \to H^{s-1}(\bbR),
\, \text{ bijectively, } \, \; s \in \bbR,    \lb{3.11} \\
\wti \cF \big(\wti G_0\big)^{-1} \wti \cF^{-1} &= \big(\wti T_0 + \wti I \big)^{-1/2} : H^s(\bbR)
\to H^{s+1}(\bbR), \, \text{ bijectively, } \, \;  s \in \bbR.   \no 
\end{align}
Here $T_0$ denotes the one-dimensional Laplace operator in $L^2(\bbR)$,
\begin{align}
& (T_0 f)(x) = - f''(x) \, \text{ for a.e.~$x\in \bbR$},   \no \\
& f \in \dom(T_0) = \big\{g \in L^2(\bbR) \, \big| \, g, g' \in AC_{loc}(\bbR); g'' \in  L^2(\bbR)\big\}
\lb{3.13} \\
& \hspace*{1.98cm} = H^2(\bbR).    \no
\end{align}
$($One observes that by a well-known Kolmogorov-type inequality, $f \in \dom (T_0)$ in \eqref{3.13} implies $f' \in L^2(\bbR)$ and hence leads to $H^2(\bbR)$.$)$

$T_0$ permits the extension $\wti T_0$ of $T_0$ being defined on the entire Sobolev scale according to \eqref{3.11},
\begin{equation}
\big(\wti T_0 + \wti I\big) : H^s(\bbR) \to H^{s-2}(\bbR) \, \text{ is a unitary map, } \; s \in \bbR.
\end{equation}
The special case $s=1$ yields the familiar fact,
\begin{equation}
\big(\wti T_0 + \wti I\big) : H^1(\bbR) \to H^{-1}(\bbR) = \big(H^1(\bbR)\big)^* \,
\text{ is a unitary map.}
\end{equation}
More generally, one obtains that
\begin{equation}
\big(\wti T_0 + \wti I\big)^{r/2} : H^s(\bbR) \to H^{s-r}(\bbR) \, \text{ is a unitary map, } \; r, s \in \bbR.
\end{equation}

One also observes that
\begin{align}
& H^0(\bbR) = L^2(\bbR), \quad \big(H^s(\bbR)\big)^* = H^{-s}(\bbR), \quad s \in \bbR,  \\
\begin{split}
& \, \cS(\bbR) \subsetneqq H^s(\bbR) \subsetneqq H^{s'}(\bbR) \subsetneqq L^2(\bbR) \subsetneqq H^{-s'}(\bbR) \subsetneqq H^{-s}(\bbR) \subsetneqq \cS'(\bbR),   \\
& \hspace*{9.07cm} s > s' > 0.
\end{split}
\end{align}

Thus, introducing
\begin{align}
& A_0 \colon = T_0 + I,    \no \\
& \wti A_0 \colon = \wti T_0 + \wti I,   \\
& \cH_r(A_0) = H^r(\bbR), \quad r \in [0,\infty),   \no 
\end{align}
identifies the scale of Sobolev spaces $H^r(\bbR)$ with the left-definite spaces $\cH_r(A_0)$, $r \in [0,\infty)$. The $($self-adjoint\,$)$ left-definite operators $A_{0,r}$ in $\cH_r(A_0) = H^r(\bbR)$ are then explicitly given by
\begin{align}
A_{0,0} &= A_0 = \cF (G_0)^2 \cF^{-1}\big|_{H^2(\bbR)},    \no \\
A_{0,r} &= \wti A_0\big|_{\cH_{r+2}(A_0)} = \wti A_0\big|_{H^{r+2}(\bbR)} = A_0\big|_{\cH_{r+2}(A_0)} = A_0\big|_{H^{r+2}(\bbR)}     \\
&= \wti \cF (\wti G_0)^2 \wti \cF^{-1}\big|_{H^{r+2}(\bbR)}, \quad r \in (0,\infty).  \no
\end{align}
\end{example}

Without going into more details, it is clear that the Fourier transform arguments in Example \ref{e3.1} extend to the multi-dimensional case in $L^2(\bbR^n)$, where
\begin{align}
\begin{split}
& A_{0,n} = - \Delta_n + I_{L^2(\bbR^n)}, \quad \dom(A_0) = H^2(\bbR^n), \quad n \in \bbN,  \\
& \cH_r(A_{0,n}) = H^r(\bbR^n), \quad r \in [0,\infty),
\end{split}
\end{align}
with $- \Delta_n$ denoting (minus) the Laplacian on $\bbR^n$, $n \in \bbN$. 

For various background material on fractional Sobolev spaces on $\bbR^n$, $n \in \bbN$, and their interpolation theory we refer, for instance, to \cite[Ch.~7]{AF03}, \cite{C-WHM15}, \cite[Ch.~3]{EE23}, \cite[Ch.~3]{HT08}, \cite[Chs.~2, 5--7, 10]{Le23}, \cite[Ch.~3]{Mc00}, \cite[Ch.~2]{Tr78}, 

Next, we turn to problems with purely discrete spectrum and start by recalling an elementary example from \cite{LW02}:

\begin{example} \lb{e3.2a}
Consider the self-adjoint operator $A$ in the Hilbert space $\ell^2(\bbN)$ given by
\begin{align}
\begin{split}
&(A f)_n = n f_n,  \quad n \in \bbN,   \\
& f \in \dom(A) = \big\{g = \{g_n\}_{n\in \bbN} \in \ell^2(\bbN) \, \big| \, \{n g_n\}_{n\in \bbN} \in \ell^2(\bbN)\big\},     \lb{3.24}
\end{split}
\end{align}
where, in obvious notation, elements $h \in \ell^2(\bbN)$ are represented as a complex-valued sequence $h =  \{h_n\}_{n\in \bbN}$. Thus, \eqref{3.24} represents a spectral representation of $A$ and one concludes that
\begin{equation}
A \geq I_{\ell^2(\bbN)}, \quad \sigma(A) = \bbN, \quad \text{$\sigma(A)$ is simple}.
\end{equation}
Moereover,
\begin{equation}
\dom\big(A^r\big) = \big\{g = \{g_n\}_{n\in \bbN} \in \ell^2(\bbN) \, \big| \, \big\{n^r g_n\big\}_{n\in \bbN} \in \ell^2(\bbN)\big\}, \quad r \in \bbR,
\end{equation}
and hence
\begin{align}
\begin{split}
\cV_r(A) = \dom\big(A^{r/2}\big) = \big\{g = \{g_n\}_{n\in \bbN} \in \ell^2(\bbN) \, \big| \, \big\{n^{r/2} g_n\big\}_{n\in \bbN} \in \ell^2(\bbN)\big\},&   \\
r \in [0,\infty),&
\end{split}
\end{align}
and
\begin{align}
\begin{split}
& \cH_0 = \ell^2(\bbN),   \\
& \cH_r(A) = \big(\cV_r(A), (\dott,\dott)_{\cH_r(A)}
= \big(A^{r/2} \dott, A^{r/2} \dott\big)_{\ell^2(\bbN)}\big), \quad r \in (0,\infty).
\end{split}
\end{align}
Finally, the self-adjoint operator $A_r = A\big|_{\cV_{r+2}(A)}$ in $\cH_r(A)$, $r \in [0,\infty)$, is of the type
\begin{align}
& (A_r f)_n = n f_n,  \quad n \in \bbN,   \no \\
& f \in \dom(A_r) = \big\{g = \{g_n\}_{n\in \bbN} \in \ell^2(\bbN) \, \big| \, \big\{n^{(r/2)+1} g_n\big\}_{n\in \bbN} \in \ell^2(\bbN)\big\},     \\
& \hspace*{8.55cm} r \in [0,\infty),    \no
\end{align}
and
\begin{equation}
\sigma(A_r) = \bbN, \quad r \in [0,\infty).
\end{equation}
\end{example}

\begin{example} \lb{e3.2}
Let $A$ satisfy the assumptions made in Hypothesis \ref{h2.1} but assume in addition that $A$ has only discrete and simple spectrum: $\sigma(A)=\sigma_d(A)$ and $\sigma_{ess}(A)=\emptyset$. For each $\lambda\in\sigma(A)$, let $\phi_\lambda$ denote the corresponding normalized eigenvector such that $\{\phi_\lambda\}_{\lambda\in\sigma(A)}$ is an orthonormal basis of eigenvectors. Let 
\begin{align}
U: \begin{cases}
\cH \to \ell^2(\sigma(A)),  \\
f \mapsto \left(U f\right)(\lambda) = (\phi_\lambda,f)_{\cH},
\end{cases}
\end{align}
be the unitary operator with inverse $U^{-1}=U^*$ given by
\begin{equation}
U^*: \begin{cases}
\ell^2(\sigma(A)) \to \cH,    \\
g \mapsto U^*g=\sum_{\lambda\in\sigma(A)} g(\lambda)\phi_\lambda.
\end{cases}
\end{equation}
Then, $U AU^*$ equals
\begin{align}
U A U^*: \begin{cases} \ell^2(\sigma(A)) \to \ell^2(\sigma(A)), \\
h \mapsto \left(U A U^*h\right)(\lambda)=\lambda h(\lambda).
\end{cases}
\end{align}
Thus, $U A U^*$ is a special case of the model operator $T$ described in \eqref{2.6a}, with $\mu$ being the counting measure on $\sigma(A)$. In particular, all results in Theorem \ref{t2.3} now extend to this special situation.
\end{example}

We continue with the particular case of the Laplacian on the interval $(0, 2 \pi)$ with boundary conditions of periodic-type:

\begin{example} \lb{e3.3}
Let $\phi \in (0,2\pi)\backslash\{\pi\}$, and consider in $L^2((0,2\pi))$,
\begin{align}
\begin{split}
& (A_{\phi} f)(x) = - f''(x), \quad x \in (0,2\pi),   \\
& f \in \dom(A_{\phi})= \big\{g\in H^2((0,2\pi)) \, \big| \, g(0)=e^{i\phi}g(2\pi), \, g'(0)=e^{i\phi}g'(2\pi)\big\}.
\end{split}
\end{align}
Then $A_{\phi}=P_{\phi}^2$, where $P_{\phi}$, the ``momentum operator'', is given by
\begin{align}
\begin{split}
& (P_{\phi} f)(x) = i f'(x), \quad x \in (0,2\pi),  \\
& f \in \dom(P_{\phi})=\big\{g\in H^1((0,2\pi)) \, \big| \, f(0)=e^{i\phi}f(2\pi)\big\},
\end{split}
\end{align}
with simple, discrete spectrum
\begin{equation}
\sigma(P_{\phi}) = \sigma_d(P_{\phi}) = \{n - [\phi/(2 \pi)]\}_{n \in \bbZ},
\end{equation}
and corresponding $($normalized\,$)$ eigenfunctions
\begin{equation}
\psi_{\phi,n}(x)=(2\pi)^{-1/2}\exp(i([\phi/(2\pi)]-n)x), \quad x \in (0,2 \pi), \; n \in \bbZ.
\end{equation}
As a result, $\{\psi_{\phi,n}\}_{n \in \bbZ}$ constitutes an orthonormal basis in $L^2((0,2\pi))$.
In addition,
\begin{equation}
A_{\phi}^{1/2} = |P_{\phi}|, \quad \dom\big(A_{\phi}^{1/2}\big) = \dom(|P_{\phi}|) = \dom(P_{\phi}).
\end{equation}
Due to the hypothesis $\phi \in (0,2\pi)\backslash\{\pi\}$, also $A_{\phi}$ has simple, discrete spectrum given by
\begin{equation}
\sigma(A_{\phi})=\sigma_{d}(A_{\phi})=\big\{|n-[\phi/(2\pi)]|^2\big\}_{n \in \bbZ},
\end{equation}
with corresponding $($normalized\,$)$ eigenfunctions $\psi_{\phi,n}$,
\begin{equation}
A_{\phi}\psi_{\phi,n} = |n-[\phi/(2\pi)]|^2 \psi_{\phi,n}, \quad n \in \bbZ.
\end{equation}

Next, we introduce the unitary map $\cF_\phi$ given by
\begin{align}
\cF_\phi: \begin{cases} L^2((0,2\pi))\to \ell^2(\Z),  \\
f \mapsto (\cF_\phi f)(n)= (\psi_{\phi,n},f)_{L^2((0,2\pi))},
\end{cases}
\end{align}
with inverse $\cF_\phi^{-1}=\cF_\phi^*$ given by
\begin{align}
\cF_\phi^{-1} : \begin{cases} \ell^2(\Z) \to L^2((0,2\pi)),    \\
g \mapsto \big(\cF_{\phi}^{-1}g\big)(\dott)=\sum_{n\in\Z} g(n)\psi_{\phi,n}(\dott).
\end{cases}
\end{align}
Denoting by $M_\phi$ the following maximally defined operator of multiplication in $\ell^2(\Z)$,
\begin{align}
\begin{split}
& (M_\phi f)(n)= \big(1+|n-[\phi/(2\pi)]|^2\big)^{1/2}f(n), \quad n \in \bbZ, \\
& f \in \dom(M_\phi) = \Big\{g\in\ell^2(\Z) \, \Big| \, \big(1+|\dott - [\phi/(2\pi)]|^2\big)^{1/2}g \in \ell^2(\Z)\Big\},
\end{split}
\end{align}
these operators are clearly strictly positive,
\begin{equation}
M_\phi \geq \Bigg(1+\bigg[\min\bigg\{\frac{\phi}{2\pi},1-\frac{\phi}{2\pi}\bigg\}\bigg]^2\Bigg)^{1/2}
I_{\ell^2(\Z)} \geq I_{\ell^2(\Z)},
\end{equation}
and hence boundedly invertible. Moreover, one has
\begin{equation}
\cF_\phi (I + A_{\phi})^{1/2}\cF^*_\phi=M_\phi.     \lb{3.38}
\end{equation}

Analogous to Example \ref{e3.1}, we now introduce the following scale of Hilbert spaces
\begin{equation}
\ell_{\phi,s}^2(\Z)=\bigg\{f:\Z\rightarrow\C \, \bigg| \, \sum_{n\in\Z} \big(1+|n-[\phi/(2\pi)]|^2\big)^s
|f(n)|^2<\infty\bigg\},\quad s\in\R,
\end{equation}
with corresponding inner product given by
\begin{equation} \lb{3.40}
(f,g)_{\ell^2_{\phi,s}(\Z)}:=\sum_{n\in\Z} \big(1+|n-[\phi/(2\pi)]|^2\big)^s \, \overline{f(n)}g(n).
\end{equation}

Identifying
\begin{equation}
\ell_{\phi,0}^2(\Z)=\ell^2(\Z)=(\ell^2(\Z))^*=(\ell_{\phi,0}^2(\Z))^*,
\end{equation}
which is independent of $\phi$, one obtains the following chain of Hilbert spaces
\begin{equation}
\ell_{\phi,s_2}^2(\Z)\subseteq \ell_{\phi,s_1}^2(\Z)\subseteq \ell^2(\Z)\subseteq \ell_{\phi,(-s_1)}^2(\Z)\subseteq \ell_{\phi,(-s_2)}^2(\Z), \quad s_2>s_1>0,
\end{equation}
where we identify
\begin{equation}
\big(\ell_{\phi,s}^2(\Z)\big)^*=\ell^2_{\phi,-s}(\Z).
\end{equation}

Mimicking the analysis in \eqref{3.5}, we extend $M_\phi$ to an operator $\wti{M}_\phi$ acting on the entire scale of Hilbert spaces, $\ell_{\phi,s}^2(\Z)$, $s\in\R$, such that
\begin{equation}
\wti{M}_\phi: \ell^2_{\phi,s}(\Z)\rightarrow \ell^2_{\phi,s-1}(\Z), \quad \big(\wti{M}_\phi\big)^{-1}: \ell^2_{\phi,s}(\Z)\rightarrow\ell^2_{\phi,s+1}(\Z),
\end{equation}
and by applying the inverse transformation $\cF_\phi^{-1}$ of $\ell_{\phi,s}^2(\Z)$, one obtains the corresponding $s$-th left-definite spaces $\cH_s(A_{\phi})$, $s \in [0,\infty)$, given by
\begin{equation}
\cH_s(A_{\phi})=(\cV_s(A_{\phi}),(\dott,\dott)_{\cH_s(A_{\phi})}).
\end{equation}
Here,
\begin{align}
\cV_s(A_{\phi})&=\dom\big(A_{\phi}^{s/2}\big)      \\
&= \bigg\{f\in L^2((0,2\pi)) \, \bigg| \,
\sum_{n\in\Z}|n-[\phi/(2\pi)]|^{2s}|( \psi_{\phi,n},f)_{L^2((0,2\pi))}|^2<\infty\bigg\},    \no \\
& \hspace*{8.85cm} s \in [0,\infty),   \no 
\end{align}
and
\begin{align}
(f,g)_{\cH_s(A_{\phi})} &= \big(A_{\phi}^{s/2}f,A_{\phi}^{s/2}g\big)_{L^2((0,2\pi))}     \no \\
&=\sum_{n\in\Z}|n-[\phi/(2\pi)]|^{2s} (f,\psi_{\phi,n})_{L^2((0,2\pi))}(\psi_{\phi,n},g)_{L^2((0,2\pi))},   \\
& \hspace*{7.2cm} s \in [0,\infty).    \no 
\end{align}
\end{example}

\begin{remark} \lb{r3.4}
In the exceptional cases $\phi \in \{0,\pi\}$, $A_{\phi}$ has twice degenerate eigenvalues (with exception of the lowest periodic eigenvalue $0$ for $\phi = 0$), but the analogue of Example \ref{e3.3} remains valid. \hfill $\diamond$
\end{remark}

\begin{remark} \lb{r3.5} ${}$
$(i)$ If $s \in \bbR \backslash \{0\}$, $\phi\in(0,2\pi) \backslash \{\pi\}$ one has
\begin{align}
\begin{split}
&\bigg\{f:\Z \to \C \, \bigg| \, \sum_{n\in\Z} \big(1+|n|^2\big)^s \, |f(n)|^2<\infty\bigg\}     \\
&\quad=\bigg\{f:\Z \to \C \, \bigg| \, \sum_{n\in\Z} \big(1+|n-[\phi/(2\pi)]|^2\big)^s \, |f(n)|^2<\infty\bigg\},
\end{split}
\end{align}
as an equality of vector spaces. However, the inner products defined in \eqref{3.40} are different and consequently $\ell^2_{s,\phi_1}(\Z)\neq \ell^2_{s,\phi_2}(\Z)$ for $\phi_1\neq\phi_2$. \\[1mm]
$(ii)$ In Appendix \ref{sA}, we provide a more explicit description of the linear spaces $\cV_s(A_{\phi})=\dom\big(A_{\phi}^{s/2}\big)$, $s\in(0,1)$. In particular, we will show that if $s \in (0,1/2)$, then $\dom\big(A_{\phi_1}^{s/2}\big) =\dom\big(A_{\phi_2}^{s/2}\big)=H^s((0,2\pi))$, $\phi_1,\phi_2\in[0,2\pi)$. But we emphasize that this does not imply equality of the corresponding Hilbert spaces $\cH_s(A_{\phi_1}), \cH_s(A_{\phi_2})$ as the corresponding inner products differ.
\hfill $\diamond$
\end{remark}

\begin{example}[Laplacian on the half-line] \lb{e3.7} Consider the minimal closed realization of $-d^2/dx^2$ in $L^2((0,\infty))$,
\begin{equation}
(B_{min}f)(x)=-f''(x),\quad x\in(0,\infty), \quad f\in\dom(B_{min}) = H^2_0((0,\infty)).
\end{equation}

Its nonnegative self-adjoint extensions are parametrized by
\begin{align}
\begin{split}
& (B_{\alpha} f)(x) = - f''(x), \quad \alpha \in [\pi/2,\pi], \; x\in(0,\infty),     \\
& f\in\dom(B_{\alpha}) = \big\{g\in H^2((0,\infty)) \, \big| \, \sin(\alpha) g'(0)+\cos(\alpha) g(0)=0\big\},
\end{split}
\end{align}
and for the rest of this example we always assume
\begin{equation}
\alpha \in [\pi/2,\pi], \, \text{ to guarantee that } \, B_{\alpha} \geq 0.    \lb{3.59}
\end{equation}
Here, the case $\alpha=\pi$ represents the Dirichlet boundary condition at the origin. One notes that the Dirichlet realization $B_\pi$ is the Friedrichs extension of $B_{min}$, while the Neumann realization $B_{\pi/2}$ is its Krein--von Neumann extension.
The operators $B_{\alpha}$ can be diagonalized via the unitary generalized Fourier or eigenfunction  transform $\cF_{\alpha}$,
\begin{equation}
\cF_{\alpha} : \begin{cases} L^2((0,\infty))\rightarrow L^2((0,\infty); d\rho_{\alpha}),    \\
h\mapsto \widehat{h}_{\alpha}(\dott)=\slim_{R\uparrow\infty}\int_0^R dx\,\phi_{\alpha}(\dott,x)h(x),
\end{cases}
\end{equation}
where $\slim$ refers to the strong limit in $L^2((0,\infty);d\rho_{\alpha})$, and its inverse
\begin{equation}
\cF_\al^{-1}: \begin{cases}  L^2((0,\infty);d\rho_{\alpha}) \to L^2((0,\infty)),   \\
\hatt h \mapsto \textstyle\slim_{\mu\uparrow\infty}\int_0^{\mu} d\rho_{\al}(\la) \, \phi_\al(\la,\dott)\hatt h(\la),
\end{cases}
\end{equation}
where $\slim$ denotes the one in $L^2((0,\infty))$. Here the functions $\phi_{\alpha}$ and
$\rho_{\alpha}$ can be chosen to be of the form
\begin{align}
\begin{split}
\phi_{\alpha}(z,x)=-\sin(\alpha)\cos(z^{1/2}x)+\cos(\alpha)z^{-1/2}\sin(z^{1/2}x),& \\
z \in \bbC, \; x \in (0,\infty),&
\end{split}
\end{align}
and
\begin{align}
& \rho_{\alpha}(\lambda)= \chi_{[0,\infty)}(\lambda) \begin{cases}
\frac{2}{\pi}\lambda^{1/2}, &\alpha=\pi/2,   \\
\frac{2}{\pi}\frac{1}{[\sin(\alpha)]^2}\left[\lambda^{1/2}-\cot(\alpha)\arctan\left(\lambda^{1/2}/\cot(\alpha)\right)\right], &\alpha\in(\pi/2,\pi),   \\
\frac{2}{3\pi}\lambda^{3/2},  &\alpha=\pi,
\end{cases}   \no \\
& \hspace*{10.2cm} \lambda \in \bbR.    \lb{3.63}
\end{align}
The diagonalization of $B_{\alpha}$ then takes on the following explicit form: Suppose $F \in C(\bbR)$, then
\begin{equation}
\cF_{\alpha} F(B_{\alpha}) \cF_{\alpha}^{-1} = M_F \, \text{ in } \, L^2((0,\infty); d\rho_{\alpha}).
\end{equation}
Here $M_G$ denotes the maximally defined operator of multiplication by the $d\rho_{\alpha}$-measurable function $G$ in $L^2((0,\infty); d\rho_{\alpha})$,
\begin{align}
\begin{split}
&(M_G \hatt h)(\la) = G(\la) \hatt h(\la) \;\text{ for $d\rho_{\alpha}$-a.e.~$\la\in (0,\infty)$},     \\
& \, \hatt h\in\dom(M_G)=\big\{\hatt k\in L^2((0,\infty);d\rho_{\alpha})  \, \big| \,  G\hatt k \in
L^2((0,\infty); d\rho_{\alpha})\big\}.     \lb{3.65}
\end{split}
\end{align}
Thus, in addition to \eqref{3.59} one concludes that\footnote{If $\alpha\in(0,\pi/2)$, then there is exactly one eigenvalue below the ac-spectrum $[0,\infty)$ of $B_{\alpha}$ at $-[\cot(\alpha)]^2$.}
\begin{equation}
\sigma(B_{\alpha})=\sigma_{ac}(B_{\alpha})=[0,\infty), \quad \sigma_{sc}(B_{\alpha})=\sigma_p(B_{\alpha})=\emptyset.
\end{equation}

Next, we denote by $M_{\alpha}$ the maximally defined operator of multiplication by the the function
$(1 + \dott)^{1/2}$ in $L^2((0,\infty);d\rho_{\alpha})$, given by
\begin{align}
\begin{split}
& (M_{\alpha} f)(\lambda)=(1+\lambda)^{1/2}f(\lambda),    \lb{3.67} \\
& f\in\dom(M_{\alpha}) = \big\{g\in L^2((0,\infty); d\rho_{\alpha}) \, \big| \, (1+\dott\,)^{1/2}g \in L^2((0,\infty);d\rho_{\alpha}) \big\}.
\end{split}
\end{align}
Then
\begin{equation}
\cF_{\alpha}(I+B_{\alpha})^{s/2}\cF_{\alpha}^{-1}=(M_{\alpha})^s, \quad \alpha \in [\pi/2,\pi], \; s\in(0,\infty).
\end{equation}

At this point, everything is analogous to Example \ref{e3.3}, following \eqref{3.38}, with the only modification that we consider the operator
\begin{equation}
B_{\alpha,1}:= I+B_{\alpha},  \quad \alpha\in[\pi/2,\pi],
\end{equation}
to ensure that $B_{\alpha,1} \geq I$. This naturally leads to the following scale of left-definite spaces for $s \in [0,\infty)$,
\begin{equation}
\cH_s(B_{\alpha,1})=(\cV_s(B_{\alpha,1}), (\dott,\dott)_{\cH_s(B_{\alpha,1})}),
\quad \alpha\in[\pi/2,\pi], \; s \in [0,\infty),
\end{equation}
where
\begin{align}
& \cV_s(B_{\alpha,1}) = \dom\big(B_{\alpha,1}^{s/2}\big) = \big\{f\in L^2((0,\infty))\, \big|\,\cF_{\alpha} f\in\dom\big(M_{\alpha}^s\big)\big\},    \no \\
& (f,g)_{\cH_s(B_{\alpha,1})} = \big(B_{\alpha,1}^{s/2}f,B_{\alpha,1}^{s/2}g\big)_{L^2((0,\infty))}
= \big(M_{\alpha}^s\cF_{\alpha} f, M_{\alpha}^s\cF_{\alpha} g\big)_{L^2((0,\infty);d\rho_{\alpha})}    \no \\
& \hspace*{1.95cm}
= \int_{(0,\infty)}d\rho_{\alpha}(\lambda) \, (1+\lambda)^s\overline{(\cF_{\alpha} f)(\lambda)} (\cF_{\alpha} g)(\lambda),    \\
& \hspace*{5cm} \alpha\in[\pi/2,\pi], \; s \in [0,\infty).    \no
\end{align}
\end{example}

For more details regarding the operator $B_{\alpha}$ in Example \ref{e3.7} see, \cite[Example~6.4.2]{GNZ24}.

We provide a brief description of the structure of the domains $\dom\big(B_{\alpha,1}^{s/2}\big)$, $\alpha\in[\pi/2,\pi]$, $s\in(0,\infty)$, in Appendix \ref{sC}.

\begin{example}[The Bessel operator on $(0,\infty)$] \lb{e3.8}
Let $\gamma \in (0,\infty)$ and introduce the Bessel differential expression $\tau_{\gamma} $ by
\begin{equation}
\tau_{\gamma} = - \f{d^2}{dx^2} +\frac{\gamma^2-1/4}{x^2}, \quad x \in (0,\infty).
\end{equation}
Then $T_{\gamma, max}$, the $($closed\,$)$ maximal operator associated with $\tau_{\gamma}$ in $L^2((0,\infty))$, is given by
\begin{align}
&T_{\gamma,max} f = \tau_{\gamma} f,    \no
\\
& f \in \dom(T_{\gamma,max})=\big\{g\in L^2((0,\infty)) \, \big| \,g,g^{[1]}\in AC_{loc}((0,\infty));   \lb{4.2.1A} \\
& \hspace*{6.65cm}  \tau_{\gamma} g\in L^2((0,\infty))\big\}.   \no
\end{align}
In addition, $\tau_{\gamma}$ is in the limit point case at $\infty$ and in the limit circle case at $0$ for $\gamma \in (0,1)$ and in the limit point case at $0$ for $\gamma \in [1,\infty)$. As a consequence, we find it convenient to distinguish the cases $\gamma \in (0,1)$ and $\gamma \in [1,\infty)$ in the following: \\[1mm]
$\mathbf{(I)}$ The case $\gamma \in (0,1)$$:$ We introduce the principal $u_{\gamma,0}(0,\dott)$ and nonprincipal solution $\hatt u_{\gamma,0}(0,\dott)$ of $\tau_{\gamma} u =0$ via
\begin{equation}
u_{\ga,0}(0, x) = x^{(1/2) + \ga},  \quad
\hatt u_{\ga,0}(0, x) = (2 \gamma)^{-1} x^{(1/2) - \ga}, \quad  x \in (0,1).    \lb{13.3.40A}
\end{equation}
The generalized boundary values for $g \in \dom(T_{\gamma,max})$ at $x=0$ are then of the form
\begin{align}
\begin{split}
\wti g(0) &= \lim_{x \downarrow 0} g(x)\big/\big[(2 \gamma)^{-1} x^{(1/2) - \gamma}\big],  \\
\wti g^{\, \prime} (0) &= \lim_{x \downarrow 0} \big[g(x) - \wti g(0) (2 \gamma)^{-1} x^{(1/2) - \gamma}\big]
 \big/x^{(1/2) + \gamma},
 \end{split}
\end{align}
The corresponding minimal operator $T_{\gamma,min}$ then takes on the form
\begin{align}
\begin{split}
& T_{\gamma,min} f = \tau_{\gamma} f, \\
& f \in \dom(T_{\gamma,min})= \big\{g\in\dom(T_{\gamma,max})  \, \big| \, \wti g(a) = {\wti g}^{\, \prime}(a) =0
= \wti g(b) = {\wti g}^{\, \prime}(b)\big\},      \lb{23.13.2.27b}
\end{split}
\end{align}
and one recalls that $T_{\gamma,max}^* = T_{\gamma,min}$ and $T_{\gamma,min}^* = T_{\gamma,max}$ and $T_{\gamma,min} \geq 0$. Moreover, the Friedrichs extension $T_{\gamma,F}$ of $T_{\gamma,min}$ in $L^2((0,\infty))$ is of the form
\begin{align}
\begin{split}
& T_{\gamma,F} f = \tau_{\gamma} f, \quad \gamma \in (0,1),    \\
& f \in \dom(T_{\gamma,F})=\{g\in\dom(T_{\gamma,max}) \, | \, \wti g(0) = 0\}.
\end{split}
\end{align}

\noindent
$\mathbf{(II)}$ The case $\gamma \in [1,\infty)$$:$ In this case
\begin{equation}
\text{$T_{\ga,max}$ is self-adjoint in $L^2((0,\infty))$.}
\end{equation}

In the following it is convenient to treat both cases simultaneously and hence we introduce
\begin{equation}
T_{\gamma} = \begin{cases} T_{\gamma,F}, & \gamma \in (0,1), \\
T_{\ga,max}, & \gamma \in [1,\infty),
\end{cases}
\end{equation}
noting
\begin{equation}
T_{\gamma} \geq 0, \quad \gamma \in (0,\infty).    \lb{3.81}
\end{equation}

Then the operator $T_{\gamma}$ can be diagonalized via the unitary generalized Fourier or eigenfunction  transform $\cF_{\gamma}$, given by
\begin{equation}
\cF_{\gamma}:\begin{cases} L^2((0,\infty))\rightarrow L^2((0,\infty); d\rho_{\gamma}),    \\
h\mapsto \widehat{h}_{\gamma}(\dott)=\slim_{R\rightarrow\infty}\int_0^R dx\,\phi_{\gamma}(\dott,x)h(x),
\end{cases} \quad \gamma \in (0,\infty),
\end{equation}
and its inverse,
\begin{equation}
\cF_{\gamma}^{-1} \colon \begin{cases} L^2(\bbR;d\rho_{\gamma}) \to L^2((0,\infty))  \\[1mm]
\hatt g \mapsto \slim_{\mu\uparrow\infty} \int_0^{\mu} d\rho_{\gamma}(\la)\,
\phi_{\gamma}(\la,\dott) \hatt g(\la),
\end{cases} \quad \gamma \in (0,\infty).
\end{equation}
Here the functions $\phi_{\gamma}$ and $\rho_{\gamma}$ can be chosen to be of the
form\footnote{We chose the normalization in \eqref{3.83} and \eqref{3.84} so that there is consistency with \eqref{3.63} for $\gamma = 1/2$ and $\alpha = \pi$.}
\begin{equation}
\phi_{\gamma}(z,x)= (\pi/2)^{1/2} z^{- \gamma/2} x^{1/2} J_{\gamma}\big(z^{1/2} x\big), \quad
\gamma \in (0,\infty), \; z \in \bbC, \; x \in (0,\infty),    \lb{3.83}
\end{equation}
and
\begin{equation}
\rho_{\gamma}(\lambda) =
\f{\lambda^{\gamma+1}}{\pi(\gamma+1)} \chi_{[0,\infty)}(\lambda), \quad
\ga \in (0,\infty), \; \lambda\in\bbR,    \lb{3.84}
\end{equation}
with
\begin{equation}
J_{\nu} (\zeta) = (\zeta/2)^{\nu} \sum_{k \in \bbN_0}  \f{(-1)^k (\zeta/2)^{2k}}{k! \Gamma(\nu + k + 1)},
\quad \nu \in \bbC \backslash (- \bbN),
\end{equation}
the standard Bessel function $($cf.\ \cite[Ch.~9]{AS72}$)$.

The rest of this example follows that of Example \ref{e3.7} nearly verbatim: For instance, the diagonalization of $T_{\gamma}$ takes on the following explicit form: Suppose $F \in C(\bbR)$, then
\begin{equation}
\cF_{\gamma} F(T_{\gamma}) \cF_{\gamma}^{-1} = M_F \, \text{ in } \, L^2((0,\infty); d\rho_{\gamma})
\end{equation}
$($cf.\ \eqref{3.65}$)$. Thus, in addition to \eqref{3.81} one concludes that\
\begin{equation}
\sigma(T_{\gamma})=\sigma_{ac}(T_{\gamma})=[0,\infty), \quad \sigma_{sc}(T_{\gamma})=\sigma_p(T_{\gamma})=\emptyset, \quad \gamma \in (0,\infty).
\end{equation}

Next $($see, \eqref{3.67}$)$, we denote by $M_{\gamma}$ the maximally defined operator of multiplication by the the function
$(1 + \dott)^{1/2}$ in $L^2((0,\infty);d\rho_{\gamma})$, given by
\begin{align}
\begin{split}
& (M_\gamma f)(\lambda)=(1+\lambda)^{1/2}f(\lambda),    \\
& f \in \dom(M_\gamma) = \big\{g\in L^2((0,\infty); d\rho_{\gamma}) \, \big| \, (1+\dott\,)^{1/2}g \in L^2((0,\infty);d\rho_{\gamma}) \big\}.
\end{split}
\end{align}
Then
\begin{equation}
\cF_{\gamma} (I+T_{\gamma})^{s/2}\cF_{\gamma}^{-1}=(M_\gamma)^s, \quad \gamma \in (0,\infty), \; s \in [0,\infty),
\end{equation}
and upon introducing
\begin{equation}
T_{\gamma,1}:= I+T_{\gamma},  \quad \gamma \in (0,\infty),
\end{equation}
to ensure that $T_{\gamma,1} \geq I$, this naturally leads to the following scale of left-definite spaces for $s \in [0,\infty)$,
\begin{equation}
\cH_s(T_{\gamma,1})=(\cV_s(T_{\gamma,1}), (\dott,\dott)_{\cH_s(T_{\gamma,1})}),
\quad \gamma \in (0,\infty), \; s \in [0,\infty),
\end{equation}
where
\begin{align}
& \cV_s(T_{\gamma,1}) = \dom\big(T_{\gamma,1}^{s/2}\big) = \big\{f\in L^2((0,\infty))\, \big|\,\cF_{\alpha} f\in\dom\big(M_{\gamma}^s\big)\big\},   \no  \\
& (f,g)_{\cH_s(T_{\gamma,1})} = \big(T_{\gamma,1}^{s/2}f,T_{\gamma,1}^{s/2}g\big)_{L^2((0,\infty))}
= \big(M_{\gamma}^s\cF_{\gamma} f, M_{\gamma}^s\cF_{\gamma} g\big)_{L^2((0,\infty);d\rho_{\gamma})}    \no \\
& \hspace*{1.95cm}
= \int_{(0,\infty)}d\rho_{\gamma}(\lambda) \, (1+\lambda)^s\overline{(\cF_{\gamma} f)(\lambda)} (\cF_{\gamma} g)(\lambda),     \\
& \hspace*{5.1cm} \gamma \in (0,\infty), \; s \in [0,\infty).    \no 
\end{align}

In \cite{GP79} $($see also \cite{FM23}, \cite{GPS21}, \cite{Ka78}, \cite{KMVZZ18}, \cite{Ro85}$)$, it is shown with the help of Hardy's inequality that
\begin{equation}
\dom\big(T_{\gamma}^{1/2}\big)=H^1_0((0,\infty)), \quad \gamma \in (0,\infty),    \lb{3.92}
\end{equation}
and thus, by the same interpolation argument as for the Dirichlet Laplacian on the half-line $(0,\infty)$ in Example \ref{e3.8} $($cf.\ Appendix \ref{sC}$)$, one obtains
\begin{equation}
\dom\big(T_{\gamma}^{s/2}\big)=H^s_{0,0}((0,\infty)), \quad \gamma \in (0,\infty), \; s\in(0,1],
\lb{3.93}
\end{equation}
where the spaces $H^{s/2}_{0,0}((0,\infty))$ are described in Equations \eqref{C.4}--\eqref{C.6}.
\end{example}

For more details in connection with $T_{\gamma,min}$, $T_{\gamma,max}$, and $T_{\gamma}$ in Example \ref{e3.8}, see \cite{EK07}, \cite[Sects.~13.2, 13.4, 13.6, Examples~13.3.5, 13.5.1, 13.7.1]{GNZ24}, \cite{Ka78}, \cite{Ro85}. As in the case of $\dom\big(B_{\alpha,1}^{s/2}\big)$, $\alpha\in[\pi/2,\pi]$, we provide a brief description of the structure of the domains $\dom\big(T_{\gamma,1}^{s/2}\big)$, $\gamma \in (0,\infty)$, $s\in(0,\infty)$, in Appendix \ref{sC}.

While the bulk of the results mentioned in Example \ref{e3.8} extends to the special case $\gamma = 0$, that is, it extends to the borderline case of semiboundedness for $T_{\gamma,min}$, the fact \eqref{3.92} is no longer valid for $\gamma = 0$ (see, \cite{GP79}, \cite{Ka78}). Hence, the direct connection to Sobolev spaces is lost for $\gamma = 0$.

\section{The Hermite Operator, The Harmonic Oscillator, Fractional Sobolev Spaces, and Interpolation Theory} \lb{s4}

In this section we take an in-depth look at Hermite polynomials, the underlying Hermite operator $A_H$ in $L^2\big(\bbR; e^{-x^2}dx\big)$, and the unitarily equivalent harmonic oscillator operator $T_{HO}$ in $L^2(\bbR)$. Our main technique in establishing the underlying left-definite theory relies on interpolation theory of fractional Sobolev spaces and domains of (strictly) positive fractional powers of $T_{HO}$, certain commutation techniques, and a precise description of $\dom\big(T_{HO}^s\big)$ \big(and hence that of $\dom\big(A_H^s\big)$\big), $s \in (0,\infty)$.

The Hermite differential expression is of the type
\begin{align}
\begin{split}
\tau_H &= - \f{d^2}{dx^2} + 2x \f{d}{dx} + c   \\
&= \f{1}{e^{-x^2}} \bigg[- \f{d}{dx} e^{-x^2} \f{d}{dx} + c e^{-x^2}\bigg], \quad c \in (0,\infty), \; x \in \bbR,
\end{split}
\end{align}
which generates the self-adjoint and maximally defined Hermite operator $A_H$ in $L^2\big(\bbR; e^{-x^2}dx\big)$ given by
\begin{align}
& (A_H f)(x) = (\tau_H f)(x), \quad x \in \bbR,     \\
& f \in \dom(A_H) = \big\{g \in L^2\big(\bbR; e^{-x^2}dx\big) \, \big| \, g, g' \in AC_{loc}(\bbR); \,
(\tau_H g) \in L^2\big(\bbR; e^{-x^2}dx\big)\big\},    \no
\end{align}
since $\tau_H$ is in the limit point case at $\pm \infty$ in $L^2\big(\bbR; e^{-x^2}dx\big)$. One verifies that $A_H$ has a compact resolvent,
\begin{equation}
(A_H - z I)^{-1} \in \cB_{\infty}\big(L^2\big(\bbR; e^{-x^2}dx\big)\big), \quad z \in \rho(A_H),  \lb{4.3}
\end{equation}
with
\begin{equation}
\sigma(A_H) = \sigma_d(A_H) = \{2m +c\}_{m \in \bbN_0}, \quad \text{$\sigma(A_H)$ is simple}.
\end{equation}
In fact, because of \eqref{4.3}, the resolvent of $A_H$ actually lies in the trace ideal
\begin{equation}
(A_H - z I)^{-1} \in \cB_{p}\big(L^2\big(\bbR; e^{-x^2}dx\big)\big), \quad z \in \rho(A_H),  \lb{4.5}
\end{equation}
for each $p \in (1,\infty)$. The normalized eigenfunctions associated with the eigenvalues $2m +c$, $m \in \bbN_0$, are the Hermite polynomials,
\begin{equation}
H_m(x), \quad m \in \bbN_0, \; x \in \bbR,
\end{equation}
given explicitly by 
\begin{align}
\begin{split} 
H_m(x) &= m! \, 2^m \sum_{j=0}^{\lfloor m/2 \rfloor} \f{(-1)^j}{2^{2j} j! (m-2j)!} x^{m-2j}    \\
&= (-1)^m e^{x^2} \f{d^m e^{-x^2}}{dx^m}; \quad m \in \bbN_0, \; x \in \bbR.  
\end{split} 
\end{align}
(We recall that $\lfloor x \rfloor$ equals the greatest integer less than or equal to $x \in \bbR$, see also \eqref{1.27}.) Explicitly,
\begin{align} 
\begin{split} 
& H_0(x) = 1, \quad H_1(x) = 2x, \quad H_2(x) = 4 x^2 -2, \quad H_3(x) = 8 x^3 - 12 x,    \\
& H_4(x) = 16 x^4 - 48x^2 +12, \quad H_5(x) = 32x^5-160x^3+120x, \; \text{ etc.}
\end{split}
\end{align} 
In particular,
\begin{equation}
(H_m,H_{m'})_{L^2(\bbR; e^{-x^2}dx)} = \pi^{1/2} 2^m m! \, \delta_{m,m'}, \quad m, m' \in \bbN_0.
\end{equation}
One can show the identity (see \cite{ELW00}),
\begin{align}
(\tau_H^n f)(x) = \sum_{j=0}^n (-1)^j c_j(n,c) e^{x^2} \big(e^{-x^2} f^{(j)}(x)\big)^{(j)},
\end{align}
where
\begin{align}
\begin{split}
c_0(n,c) &= c^n,     \\
c_j(n,c) &= 2^{n-j} \sum_{m=0}^{n-1} \begin{pmatrix} n \\ m \end{pmatrix} S^{(j)}_{n-m} 2^{-m} c^m, \quad j \in \bbN,
\end{split}
\end{align}
and
\begin{equation}
S^{(j)}_{\ell} = \sum_{k=0}^j \f{(-1)^{j+k}}{j!} \begin{pmatrix} j \\ k \end{pmatrix} c^{\ell}, \quad
j, \ell \in \bbN_0,
\end{equation}
are the Stirling numbers of the second kind. One has $S^{(j)}_0 =0$, $j \in \bbN$, and
\begin{equation}
c_j(n,c) > 0, \quad j \in \bbN_0, \, n \in \bbN,     \lb{4.14}
\end{equation}
since $c \in (0,\infty)$. In addition, one has
\begin{align}
\begin{split}
\big(A_H^{n/2} f, A_H^{n/2} g\big)_{L^2(\bbR; e^{-x^2}dx)} = \sum_{j=0}^n c_j(n,c) \int_{\bbR}
e^{-x^2} dx \, \ol{f^{(j)}(x)} g^{(j)}(x),& \\
n \in \bbN, \; f, g \in \dom\big(A_H^{n/2}\big).     \lb{4.15}
\end{split}
\end{align}

Turning to eigenfunction expansions induced by $A_H$ next, one notes the following facts upon re-normalizing $H_m$ as follows, 
\begin{equation}
K_m(x) = [\pi^{1/2} 2^m m!]^{-1/2} H_m(x), \quad m \in \bbN_0, \; x \in \bbR,
\end{equation}
such that 
\begin{equation}
(K_m,K_{m'})_{L^2(\bbR; e^{-x^2} dx)} = \delta_{m,m'}, \quad m, m' \in \bbN_0.
\end{equation}
Then one obtains
\begin{align}
& I = I_{L^2(\bbR; e^{-x^2}dx)} = \sum_{m \in \bbN_0} (K_m, \dott)_{L^2(\bbR; e^{-x^2}dx)} K_m,     \\
\begin{split}
& h = \sum_{m \in \bbN_0} c_m(h) K_m, \quad
\{c_{m}(h) = (K_{m},h)_{L^2(\bbR; e^{-x^2}dx)}\}_{m \in \bbN_0} \in \ell^2(\bbN_0),  \\
& \hspace*{7.42cm} h \in L^2\big(\bbR; e^{-x^2}dx\big),
\end{split}   \\
& A_H f = \sum_{m \in \bbN_0} (2m+c) (K_m, f)_{L^2(\bbR; e^{-x^2}dx)} K_m,  \no \\
& f \in \dom(A_H) = \big\{g \in L^2\big(\bbR; e^{-x^2}dx\big) \, \big| \, \\
& \hspace*{2.75cm} \{(2m + c) (K_m,g)_{L^2(\bbR; e^{-x^2}dx)}\}_{m \in \bbN_0} \in \ell^2(\bbN_0)\big\}.   \no
\end{align}
More generally, for $F$ a complex-valued Borel function on $\bbR$,
\begin{align}
& F(A_H) f = \sum_{m \in \bbN_0} F(2m+c) (K_m, f)_{L^2(\bbR; e^{-x^2}dx)} K_m,    \no \\
& f \in \dom(F(A_H)) = \big\{g \in L^2\big(\bbR; e^{-x^2}dx\big) \, \big| \, \\
& \hspace*{3.3cm} \{F(2m + c) (K_m,g)_{L^2(\bbR; e^{-x^2}dx)}\}_{m \in \bbN_0} \in \ell^2(\bbN_0)\big\}.    \no
\end{align}
In particular, for $s \in \bbR$,
\begin{align}
& A_H^{s/2} f = \sum_{m \in \bbN_0} (2m+c)^{s/2} (K_m, f)_{L^2(\bbR; e^{-x^2}dx)} K_m,    \no \\
& f \in \dom\big(A_H^{s/2}\big) = \big\{g \in L^2\big(\bbR; e^{-x^2}dx\big) \, \big| \, \\
& \hspace*{3cm} \big\{(2m + c)^{s/2} (K_m,g)_{L^2(\bbR; e^{-x^2}dx)}\big\}_{m \in \bbN_0} \in \ell^2(\bbN_0)\big\}, \quad s \in \bbR,     \no
\end{align}
and one can rewrite \eqref{4.15} in the form
\begin{align}
\begin{split}
& \big(A_H^{n/2} f, A_H^{n/2} g\big)_{L^2(\bbR; e^{-x^2}dx)} = \sum_{j=0}^n c_j(n,c) \big(f^{(j)}, g^{(j)}\big)_{L^2(\bbR; e^{-x^2}dx)}    \\
& \quad = \sum_{m \in \bbN_0} (2m+c)^n (f,K_m)_{L^2(\bbR; e^{-x^2}dx)}
(K_m, g)_{L^2(\bbR; e^{-x^2}dx)}, \quad n \in \bbN.
\end{split}
\end{align}
It follows from \eqref{4.14} and \eqref{4.15} that as sets, $\dom\big(A_H^{n/2}\big)$ and the Gaussian weighted Sobolev space $H^n\big(\bbR; e^{-x^2}dx\big)$, $n \in \bbN$, coincide, that is, one obtains
\begin{align}
\cV_n(A_H) &= \dom\big(A_H^{n/2}\big) = H^n\big(\bbR; e^{-x^2}dx\big)     \no \\
&= \big\{g \in L^2\big(\bbR; e^{-x^2}dx\big) \, \big| \, \\
& \hspace*{6mm} \big\{(2m + c)^{n/2} (K_m,g)_{L^2(\bbR; e^{-x^2}dx)}\big\}_{m \in \bbN_0} \in \ell^2(\bbN_0)\big\}, \quad n \in \bbN.    \no
\end{align}
Hence, one obtains for the $n$-th left-definite space associated with $A_H$,
\begin{align} 
\begin{split} 
\cH_n(A_H) &= (\cV_n(A_H); (\dott, \dott)_{ \cH_n(A_H)})
= \big(\dom\big(A_H^{n/2}\big), \big(A_H^{n/2}\dott, A_H^{n/2} \dott \big)_{\cH}\big)     \\
&= \big(H^n(\bbR; e^{-x^2}dx), \big(A_H^{n/2}\dott, A_H^{n/2} \dott \big)_{\cH}\big), \quad n \in \bbN.
\end{split} 
\end{align}
The associated self-adjoint left-definite operator $A_{H,n}$ in $\cH_n(A_H)$, $n \in \bbN$, is then of the form
\begin{align}
& A_{H,n} f = A_H f,    \no \\
& f \in \dom(A_{H,n}) = H^{n+2}\big(\bbR; e^{-x^2}dx\big)    \\
& \quad = \big\{g \in L^2\big(\bbR; e^{-x^2}dx\big) \, \big| \,    \no \\
& \quad \hspace*{6mm} \big\{(2m + c)^{(n/2)+1} (K_m,g)_{L^2(\bbR; e^{-x^2}dx)}\big\}_{m \in \bbN_0} \in \ell^2(\bbN_0)\big\}, \quad n \in \bbN.     \no
\end{align}

At this point we take this a decisive step further and define the fractional Gaussian weighted Sobolev space $H^s\big(\bbR; e^{-x^2}dx\big)$ via
\begin{align}
\cV_s(A_H) &= \dom\big(A_H^{s/2}\big) := H^s\big(\bbR; e^{-x^2}dx\big)      \no \\
& = \big\{g \in L^2\big(\bbR; e^{-x^2}dx\big) \, \big|      \lb{4.25} \\
& \hspace*{6mm} \big\{(2m + c)^{s/2} (K_m,g)_{L^2(\bbR; e^{-x^2}dx)}\big\}_{m \in \bbN_0} \in \ell^2(\bbN_0)\big\}, \quad s \in [0,\infty).     \no
\end{align} 
This yields for the $s$-th left-definite space associated with $A_H$,
\begin{align}
\begin{split} 
\cH_s(A_H) &= (\cV_s(A_H); (\dott, \dott)_{ \cH_s(A_H)})
= \big(\dom\big(A_H^{s/2}\big), \big(A_H^{s/2}\dott, A_H^{s/2} \dott \big)_{\cH}\big)    \lb{4.26} \\
&= \big(H^s\big(\bbR; e^{-x^2}dx\big), \big(A_H^{s/2}\dott, A_H^{s/2} \dott \big)_{\cH}\big),
\quad s \in [0,\infty).
\end{split} 
\end{align}

The associated self-adjoint left-definite operator $A_{H,s}$ in $\cH_s(A_H)$, $s \in [0,\infty)$, is then of the form
\begin{align}
& A_{H,s} f = A_H f,    \no \\
& f \in \dom(A_{H,s}) = H^{s+2}\big(\bbR; e^{-x^2}dx\big)    \lb{4.28} \\
& \quad = \big\{g \in L^2\big(\bbR; e^{-x^2}dx\big) \, \big| \,    \no \\
& \quad \hspace*{6mm} \big\{(2m + c)^{(s/2)+1} (K_m,g)_{L^2(\bbR; e^{-x^2}dx)}\big\}_{m \in \bbN_0} \in \ell^2(\bbN_0)\big\}, \quad s \in [0,\infty).     \no
\end{align}

One notes that the (generalized) Fourier coefficients $c_m(g)=(K_m,g)_{L^2(\bbR; e^{-x^2}dx)}$, $m \in \bbN_0$, play precisely the role of the coefficients $g_n$, $n \in \bbN$, in Example \ref{e3.2a}.

After a thorough study of the harmonic oscillator operator and its fractional powers in the remainder of this section, we will be able to provide an additional characterization of $\cV_s(A_H)$ and $\cH_s(A_H)$, $s \in [0,\infty)$, involving fractional Sobolev spaces, at the end of Section \ref{s4}. 

Next, we turn to the harmonic oscillator operator $T_{HO}$ in $L^2(\bbR)$: The harmonic oscillator differential expression $\tau_{HO}$ is of the form
\begin{equation}
\tau_{HO} = - \f{d^2}{dx^2} + x^2, \quad x \in \bbR,
\end{equation}
which is in the limit point case at $\pm \infty$ with respect to $L^2(\bbR)$ and hence gives rise to the self-adjoint and maximally defined associated operator
\begin{align}
\begin{split} 
& (T_{HO} f)(x) = (\tau_{HO} f)(x), \quad x \in \bbR,    \lb{4.30} \\
& f \in \dom(T_{HO}) = \big\{g \in L^2(\bbR) \, \big| \, g, g' \in AC_{loc}(\bbR); \, (\tau_{HO} g) \in L^2(\bbR)\big\}
\end{split} 
\end{align}
in $L^2(\bbR)$. The unitary maps
\begin{align}
U \colon \begin{cases} L^2\big(\bbR; e^{-x^2} dx\big) \to L^2(\bbR), \\
v(x) \mapsto e^{- x^2/2} v(x), \end{cases} \quad
U^{-1} \colon \begin{cases} L^2(\bbR) \to L^2\big(\bbR; e^{-x^2} dx\big), \\
u(x) \mapsto e^{x^2/2} u(x), \end{cases}     \lb{4.32}
\end{align}
yield the following unitary equivalence of the shifted harmonic oscillator $T_{HO}$ and the Hermite operator $A_H$ in the form
\begin{equation}
U^{-1} [T_{HO} + (c-1) I] U = A_H.    \lb{4.33} 
\end{equation}
Thus,
\begin{equation}
\sigma(T_{HO}) = \sigma_d(T_{HO}) = \{2m + 1\}_{m \in \bbN_0}, \quad \text{$\sigma(T_{HO})$ is simple},
\end{equation}
and for each $p \in (1,\infty)$,
\begin{equation}
(T_{HO} - z I)^{-1} \in \cB_{p}\big(L^2(\bbR)\big), \quad z \in \rho(T_{HO}).    \lb{4.35}
\end{equation}
In particular, the normalized eigenfunctions $u_m$ associated with the eigenvalues $2m + 1$, $m \in \bbN_0$, of $T_{HO}$ are of the form
\begin{align} 
\begin{split} 
u_m(x) &= \big[\pi^{1/2} 2^m m!\big]^{-1/2} H_m(x) e^{-x^2/2}     \\
&= K_m(x) e^{-x^2/2} , \quad m \in \bbN_0, \; x \in \bbR,
\end{split} 
\end{align} 
such that
\begin{equation}
(u_m,u_{m'})_{L^2(\bbR)} = \delta_{m,m'}, \quad m, m' \in \bbN_0.
\end{equation}

In the following we will give an alternative description of the domains of fractional powers of the harmonic oscillator, $\dom(T_{HO}^\theta)$. We start with the following definition that will provide us with the appropriate scale of Hilbert spaces.

\begin{definition}
Let $V \in C^1(\bbR^n)$ such that $V(x)>0$ for all $x\in\bbR^n$. For $\theta \in (0,\infty)$, we define $H^\theta_V(\bbR^n)$ to be the closure of $C_0^\infty(\bbR^n)$ with respect to the norm
\begin{equation}
\|u\|_{H^\theta_V(\bbR^n)}:=\left(\|u\|^2_{H^\theta({\bbR ^n})}+ \big\|V^{\theta/2}u\big\|_{L^2(\bbR ^n)}^2\right)^{1/2}.
\end{equation}
Here, $H^\theta(\bbR^n)$ is the usual fractional Sobolev space given by
\begin{equation}
H^\theta(\bbR^n)=\bigg\{f\in L^2(\bbR^n)\bigg| \int_{\bbR^n} d^n\xi \, 
\big|\hatt{f}(\xi)\big|^2 \big(1+|\xi|^2\big)^{\theta}<\infty\bigg\},
\end{equation}
with inner product
\begin{equation*}
\langle f,g\rangle_{H^\theta(\bbR^n)}=\int_{\bbR^n} d^n\xi \, \overline{\hatt{f}(\xi)}\hatt{g}(\xi) 
\big(1+|\xi|^2\big)^{\theta}.
\end{equation*}
\end{definition}

\begin{lemma} \lb{lemma:4.2}
The space $H^\theta_V(\bbR^n)$ is given by\footnote{With some abuse of notation we denote the function $V$ and the operator of multiplication in $L^2(\bbR)$ by $V$ with the same symbol.}
\begin{equation}
H^\theta_V(\bbR^n)=H^\theta(\bbR^n) \cap \dom\big(V^{\theta/2}\big).
\end{equation}
\end{lemma}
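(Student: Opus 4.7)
I will establish the two set-theoretic inclusions separately. The inclusion $H^\theta_V(\bbR^n) \subseteq H^\theta(\bbR^n) \cap \dom(V^{\theta/2})$ is a routine closedness argument. If $\{\phi_k\}_{k\in\bbN} \subset C_0^\infty(\bbR^n)$ converges to $u$ in the $\|\dott\|_{H^\theta_V(\bbR^n)}$-norm, then automatically $\phi_k \to u$ in $H^\theta(\bbR^n)$ (so $u \in H^\theta(\bbR^n)$), while $\{V^{\theta/2}\phi_k\}$ is Cauchy and hence convergent in $L^2(\bbR^n)$ to some $w$. Passing to an a.e.\ convergent subsequence of $\phi_k \to u$ and exploiting the continuity $V^{\theta/2} \in C^0(\bbR^n)$ (since $V \in C^1(\bbR^n)$ and $V > 0$) identifies $w = V^{\theta/2} u$, so $u \in \dom(V^{\theta/2})$.

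The substantive direction is the density claim $H^\theta(\bbR^n) \cap \dom(V^{\theta/2}) \subseteq H^\theta_V(\bbR^n)$, for which I will use the standard truncate-and-mollify strategy. Fix $\chi \in C_0^\infty(\bbR^n)$ with $\chi \equiv 1$ on the unit ball and $\supp(\chi) \subseteq B_2(0)$, and set $\chi_k(x) = \chi(x/k)$. The first step shows $\chi_k u \to u$ in the $H^\theta_V(\bbR^n)$-norm: the weighted $L^2$-component $\big\|V^{\theta/2}(\chi_k u - u)\big\|_{L^2}^2 \to 0$ is immediate by dominated convergence (using $|\chi_k| \leq 1$, pointwise convergence $\chi_k \to 1$, and $V^{\theta/2} u \in L^2(\bbR^n)$). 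The second step mollifies each compactly supported $\chi_k u$ to obtain test functions: let $\rho_\eps$ be a standard mollifier and set $\phi_{k,\eps} = \rho_\eps * (\chi_k u) \in C_0^\infty(\bbR^n)$. Mollification converges in $H^\theta(\bbR^n)$ by dominated convergence applied on the Fourier side, and since the supports of all $\phi_{k,\eps}$ for $\eps \leq 1$ lie in a common compact set on which the continuous function $V^{\theta/2}$ is bounded, one also obtains $V^{\theta/2}\phi_{k,\eps} \to V^{\theta/2}(\chi_k u)$ in $L^2(\bbR^n)$. A diagonal extraction $\phi_k := \phi_{k,\eps_k}$ with $\eps_k$ sufficiently small then closes the argument.

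The main technical obstacle is the $H^\theta$-convergence $\chi_k u \to u$ for non-integer $\theta$, where the norm is nonlocal. For $\theta \in (0,1)$, using the Gagliardo-equivalent form of $\|\dott\|_{H^\theta(\bbR^n)}^2$, one expands
\begin{equation*}
(\chi_k u)(x) - (\chi_k u)(y) = \chi_k(x)[u(x) - u(y)] + [\chi_k(x) - \chi_k(y)] u(y),
\end{equation*}
applies $|a+b|^2 \leq 2|a|^2 + 2|b|^2$, and treats the two resulting double integrals separately: the first is handled by dominated convergence against the finite double integral defining $[u]_{H^\theta}^2$, while the second is split into $\{|x-y| \leq 1\}$ (where the Lipschitz bound $|\chi_k(x) - \chi_k(y)| \leq C k^{-1}|x-y|$ produces a factor $k^{-2}$ against the weight $|x-y|^{2-n-2\theta}$, integrable near the diagonal since $\theta < 1$) and $\{|x-y| > 1\}$ (where $|\chi_k(x) - \chi_k(y)|^2 \leq 4$, the weight $|x-y|^{-n-2\theta}$ is integrable at infinity, and $\chi_k(x) - \chi_k(y) \to 0$ pointwise allows dominated convergence). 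For $\theta \geq 1$, writing $\theta = \lfloor \theta \rfloor + \{\theta\}$ as in \eqref{1.27} and applying the Leibniz rule reduces the convergence to the integer-derivative case (direct by dominated convergence on the Fourier side) together with the fractional case just handled.
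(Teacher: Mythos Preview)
Your proof is correct and follows essentially the same truncate-then-mollify strategy as the paper; the only difference is that the paper outsources the $H^\theta$-convergence of truncation and mollification to results in Leoni's book \cite{Le23}, whereas you supply self-contained Gagliardo-seminorm arguments. One minor slip: the decomposition you display should be for $(\chi_k-1)u$ rather than $\chi_k u$, since otherwise the dominated-convergence step for the first term produces $[u]_{H^\theta}^2$ rather than $0$; the intended correction is obvious and the rest of the argument goes through unchanged.
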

\begin{proof}
First we show that if $f\in H_V^\theta(\bbR^n)$, then $f\in H^\theta(\bbR^n)\cap\dom\big(V^{\theta/2}\big)$. But $f\in H_V^{\theta}(\bbR^n)$ means that there is a sequence $\{f_k\}_{k\in\bbN}$ in $C_0^\infty(\bbR^n)$  such that
\begin{equation}
\lim_{k\rightarrow\infty}\|f_k-f\|_{H_V^\theta(\bbR^n)}=0.
\end{equation}
Then $f\in H^\theta(\bbR^n)\cap\dom\big(V^{\theta/2}\big)$ follows immediately from the facts that
\begin{align} 
\begin{split} 
& \|f_k-f\|_{H^\theta(\bbR^n)} \leq \|f_k-f\|_{H^\theta_V(\bbR^n)}, \\
&\|f_k-f\|_{L^2(\bbR^n)}+ \big\|V^{\theta/2}(f_k-f)\big\|_{L^2(\bbR^n)} 
\leq \|f_k-f\|_{H^\theta_V(\bbR^n)}. 
\end{split} 
\end{align}

In order to show that $H^\theta(\bbR^n)\cap\dom\big(V^{\theta/2}\big)\subseteq H^\theta_V(\bbR^n)$ we need to show that for every $f\in H^\theta(\bbR^n)\cap\dom\big(V^{\theta/2}\big)$ and every $\varepsilon>0$, there is a $g\in C_0^\infty(\bbR^n)$ such that $\|f-g\|_{H_V^\theta(\bbR^n)}<\varepsilon$.

From the proof of Lemma 6.67 in \cite{Le23}, there exists a sequence of compactly supported, non-negative and smooth functions $\phi_k\in C_0^\infty(\bbR^n)$ such that $\phi_k(x)=1$ for $|x|\leq k$, $\phi_k(x)=0$ for $|x|\geq(k+1)$, and $|\nabla \phi_k(x)|\leq C$, where $C$ is independent of $k \in \bbN$. Moreover, the $\phi_k$ can be chosen such that $\|\phi_k\|_\infty=1$. Defining $f_k:=\phi_k f$, it is shown in the proof of Lemma 6.67 in \cite{Le23} that $f_k\in H^\theta(\bbR^n)$ and that $\lim_{k\rightarrow\infty}\|f_k-f\|_{H^\theta(\bbR^n)}=0$. Next, let $0 \leq \eta\in C_0^\infty(\bbR^n)$ be the smooth bump function given by  
\begin{equation}
\eta(x) = \begin{cases} c e^{(|x|^2 - 1)^{-1}}, & |x| < 1, \\
0, & |x| \geq 1, \end{cases} \quad c =\bigg(\int_{\bbR^n} d^n x \, e^{(|x|^2 - 1)^{-1}}\bigg)^{-1},
\end{equation}
and for $\delta\in(0,1)$ define $\eta_\delta(x):=\delta^{-n}\eta(x/\delta)$ as well as $f_{k,\delta}:=f_k*\eta_\delta$. One observes that $f_{k,\delta}\in C_0^\infty(\bbR^n)$ and $\supp(f_{k,\delta})\subseteq B_{k+1+\delta}(0)$. By Theorems 6.62 and 6.66 in \cite{Le23}, it follows that $\lim_{\delta\downarrow 0}\|f_{k,\delta}-f_k\|_{H^\theta(\bbR^n)}=0$ for any fixed $k$. Consequently, one can fix $k' \in \bbN$ sufficiently large and then $\delta'$ sufficiently small such that
\begin{equation}
\|f-f_{k',\delta'}\|_{H^\theta(\bbR^n)}\leq \|f-f_{k'}\|_{H^\theta(\bbR^n)}+\|f_{k'}-f_{k',\delta'}\|_{H^\theta(\bbR^n)}<\varepsilon.
\end{equation}

Next, consider
\begin{equation} \lb{4.50}
\big\|V^{\theta/2}(f_{k,\delta}-f)\big\|_{L^2(\bbR^n)}\leq \big\|V^{\theta/2}(f_{k}-f)\big\|_{L^2(\bbR^n)}
+ \big\|V^{\theta/2}(f_{k,\delta}-f_k)\big\|_{L^2(\bbR^n)}.
\end{equation}
Focusing on the first of the two terms on the right-hand side of this inequality yields, 
\begin{equation}
\big\|V^{\theta/2}(f_{k}-f)\big\|_{L^2(\bbR^n)}^2=\int_{\bbR^n} d^nx \, V(x)^\theta   |f(x)|^2(1-\phi_k)^2 \leq \int_{|x|\geq k}  d^nx \, V(x)^\theta   |f(x)|^2.
\end{equation}
Since $f\in \dom\big(V^{\theta/2}\big)$, this implies that there is a $k''\geq k'$ sufficiently large, $k'' \in \bbN$, such that
\begin{equation}
\big\|V^{\theta/2}(f_{k''}-f)\big\|_{L^2(\bbR^n)}<\varepsilon/3.
\end{equation}
Next, consider the second term in \eqref{4.50}:
\begin{align} 
\begin{split} 
\big\|V^{\theta/2}(f_{k'',\delta}-f_{k''})\big\|_{L^2(\bbR^n)}^2&=\int_{B_{k''+1+\delta}(0)} d^nx \, V(x)^\theta  |f_{k'',\delta}(x)-f_{k''}(x)|^2    \\
&\leq C_{V,\theta,k''} \|f_{k'',\delta}-f_{k''}\|_{L^2(\bbR^n)}, 
\end{split} 
\end{align}
where $C_{V,\theta,k''}:=\sup_{x\in B_{k''+2}(0)} \big\{V(x)^\theta  \big\} \in (0,\infty)$, since $V \in C^1(\bbR^n)$ by assumption. Choosing $0 < {\delta''}\leq \delta'$ sufficiently small such that 
$C_{V,\theta,k''} \|f_{k'',{\delta''}}-f_{k''}\|_{L^2(\bbR^n)}<\varepsilon/3$, one then gets
\begin{align}
&\|f_{k'',{\delta''}}-f\|_{H_V^\theta(\bbR^n)}  \no \\ 
&\leq \|f-f_{k'',{\delta''}}\|_{H^\theta(\bbR^n)} 
+ \big\|V^{\theta/2}(f_{k''}-f)\big\|_{L^2(\bbR^n)} 
+ \big\|V^{\theta/2}(f_{k'',{\delta''}}-f_{k''})\big\|_{L^2(\bbR^n)}   \no \\
&<\varepsilon/3+\varepsilon/3+\varepsilon/3=\varepsilon,  
\end{align}
finishing the proof.
\end{proof}

We will make use of the following weighted Gagliardo--Nirenberg inequality, which was proven in \cite{Li86}. However, we will follow the presentation in \cite[Theorem 1.2]{DS23}:

\begin{theorem} \lb{t4.3} 
Let $1\leq p,q,r<\infty$, $m\in\bbN_0$, $\kappa\in\bbN$, $(m/\kappa) \leq \theta \leq 1$, $[\kappa-m- (d/p)] \notin\bbN_0$, and $\alpha,\beta,\gamma\in\bbR$ be such that $\alpha>-d/p$, $\beta>-d/q$, $\gamma>-d/r$ satisfy the conditions
\begin{equation}
\frac{1}{r}-\frac{m-\gamma}{d}=\theta\left(\frac{1}{p}-\frac{\kappa-\alpha}{d}\right)+(1-\theta)\left(\frac{1}{q}+\frac{\beta}{d}\right)
\end{equation}
and
\begin{equation}
0\leq \theta\alpha+(1-\theta)\beta-\gamma\leq\theta \kappa-m.
\end{equation}
Then there exists a constant $C=C_{p,q,r,\alpha,\beta,\gamma,\theta}$ such that
\begin{equation}
\big\| |x|^\gamma\nabla^m f \big\|_{L^r(\bbR^d)}\leq C \big\||x|^\alpha\nabla^\kappa f \big\|_{L^p(\bbR^d)}^\theta \big\||x|^\beta f \big\|_{L^q(\bbR^d)}^{1-\theta}, \quad f\in C_0^\infty(\bbR^d). 
\end{equation}
\end{theorem}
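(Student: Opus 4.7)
\textbf{Proof plan for Theorem \ref{t4.3}.} Since this is the weighted Gagliardo--Nirenberg inequality of Lin \cite{Li86} (whose exposition is followed via \cite{DS23}), the natural plan is to recover it by three ingredients: (a) scale invariance, (b) a dyadic annular decomposition of $\bbR^d$, and (c) a classical (unweighted) Gagliardo--Nirenberg--Sobolev inequality applied on each annulus, followed by H\"older's inequality to sum the pieces. The first displayed exponent condition is precisely the homogeneity identity: substituting $f_\lambda(x)=f(\lambda x)$ in the desired inequality for $\lambda>0$ shows that both sides transform with the same power of $\lambda$ if and only if the scaling relation $r^{-1}-(m-\gamma)d^{-1}=\theta[p^{-1}-(\kappa-\alpha)d^{-1}]+(1-\theta)[q^{-1}+\beta d^{-1}]$ holds. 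The second displayed condition $0\leq\theta\alpha+(1-\theta)\beta-\gamma\leq\theta\kappa-m$ is the one that makes the subsequent dyadic sum convergent.

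The steps, in order, would be: First, I would set $A_k=\{x\in\bbR^d \mid 2^{k-1}\leq |x|<2^{k}\}$, $k\in\bbZ$, and note that on $A_k$ one has $|x|^\sigma\asymp 2^{k\sigma}$ for any $\sigma\in\bbR$, with constants depending only on $\sigma$. Second, on each $A_k$ I would apply a (rescaled) unweighted Gagliardo--Nirenberg--Sobolev inequality of the form
\begin{equation}
\|\nabla^m f\|_{L^r(A_k)}\leq C\,\|\nabla^\kappa f\|_{L^p(\widetilde A_k)}^\theta\,\|f\|_{L^q(\widetilde A_k)}^{1-\theta},
\end{equation}
where $\widetilde A_k$ is a slightly enlarged annulus; the hypothesis $[\kappa-m-d/p]\notin\bbN_0$ is imposed precisely to avoid the non-interpolation borderline cases in this local inequality. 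Scaling this local inequality by the radius $2^k$ (using $f(2^k\cdot)$) introduces the correct power $2^{k[(m-\gamma)-\theta(\kappa-\alpha)+(1-\theta)\beta]}$ by the scaling identity, so that the resulting local estimate reads
\begin{equation}
2^{k\gamma}\|\nabla^m f\|_{L^r(A_k)}\leq C\,\bigl(2^{k\alpha}\|\nabla^\kappa f\|_{L^p(\widetilde A_k)}\bigr)^\theta\,\bigl(2^{k\beta}\|f\|_{L^q(\widetilde A_k)}\bigr)^{1-\theta}.
\end{equation}
Third, I would raise to the $r$-th power, sum over $k\in\bbZ$, and apply H\"older's inequality with conjugate exponents $(p/(\theta r),q/((1-\theta)r))$ (after reducing to the case where these are $\geq 1$, which is the role of $\theta\geq m/\kappa$). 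The weights $2^{k[\theta\alpha+(1-\theta)\beta-\gamma]}$ produced by the H\"older step must be absorbed; this is exactly where the second hypothesis $0\leq\theta\alpha+(1-\theta)\beta-\gamma\leq\theta\kappa-m$ enters, controlling the discrete geometric series in $k$ against the $|x|^\alpha$- and $|x|^\beta$-weighted norms on $\widetilde A_k$.

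The step I expect to be the main obstacle is the sharp dyadic bookkeeping in the last part: when $\theta\alpha+(1-\theta)\beta-\gamma$ lies in the interior of $(0,\theta\kappa-m)$ one has genuine geometric decay on one side and must trade factors between the $k$-sum and the local norms using H\"older's inequality in $\ell^p(\bbZ)\times\ell^q(\bbZ)$; the endpoint cases $\theta\alpha+(1-\theta)\beta-\gamma=0$ or $=\theta\kappa-m$ typically require either a direct Hardy-type argument or a limiting procedure from the interior case. The hypotheses $\alpha>-d/p$, $\beta>-d/q$, $\gamma>-d/r$ ensure that the local weighted $L^p$, $L^q$, $L^r$ norms on any annulus containing the origin are finite for $f\in C_0^\infty(\bbR^d)$, so no further regularization is needed. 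Since the result is classical and cited from the literature, I would ultimately refer the reader to \cite[Theorem~1.2]{DS23} and \cite{Li86} rather than redo all the technicalities.
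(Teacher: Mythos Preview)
The paper does not prove this theorem at all: it is stated as a quotation from the literature, introduced by the sentence ``We will make use of the following weighted Gagliardo--Nirenberg inequality, which was proven in \cite{Li86}. However, we will follow the presentation in \cite[Theorem 1.2]{DS23},'' and no argument is given. Your proposal ultimately lands in the same place---referring the reader to \cite{Li86} and \cite[Theorem~1.2]{DS23}---so in that sense it matches the paper exactly. The proof sketch you outline (scaling identity, dyadic annular decomposition, local unweighted Gagliardo--Nirenberg, H\"older summation) is indeed the standard route to such weighted interpolation inequalities and is consistent with the approach in the cited references, but it goes well beyond anything the paper itself supplies; for the purposes of this paper a bare citation suffices.
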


\begin{corollary} \lb{coro:4.4}
For every $k\in\bbN$, there exist constants $C_k, C_{k}' \in (0,\infty)$ such that 
\begin{align} \lb{4.54}
\big\|x^{2k}f'' \big\|_{L^2(\bbR)} & \leq C_k' \big\|f^{(2k+2)}\big\|_{L^2(\bbR)}^{1/(1+k)}\big\|x^{2k+2}f\big\|_{L^2(\bbR)}^{k/(1+k)}     \\ 
& \leq C_k\big[\big\|f^{(2k+2)}\big\|_{L^2(\bbR)}+\big\|x^{2k+2}f\big\|_{L^2(\bbR)}\big], \quad 
f\in C_0^\infty(\bbR). \lb{4.55}
\end{align}
Similarly, one gets
\begin{align} \lb{4.56}
\begin{split} 
\left\|\frac{d^{2k}}{dx^{2k}}(x^{2}f)\right\|_{L^2(\bbR)} \leq& C_k' \big\|f^{(2k+2)}\big\|_{L^2(\bbR)}^{1/(1+k)} \big\|x^{2k+2}f\big\|_{L^2(\bbR)}^{k/(1+k)}\\
\leq& C_k \big[\big\|f^{(2k+2)}\big\|_{L^2(\bbR)}
+ \big\|x^{2k+2}f\big\|_{L^2(\bbR)}\big], \quad f\in C_0^\infty(\bbR).
\end{split} 
\end{align} 
\end{corollary}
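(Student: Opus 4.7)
Both estimates \eqref{4.54} and \eqref{4.56} will be derived by direct application of the weighted Gagliardo--Nirenberg inequality of Theorem \ref{t4.3}, and the weaker additive bounds \eqref{4.55} and the second line of \eqref{4.56} will then follow from Young's inequality in the form $a^{\theta} b^{1-\theta} \le a + b$. The interpolation parameter $\theta$ in each case is dictated by the scaling identity built into Theorem \ref{t4.3}.

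For \eqref{4.54}, I would specialize Theorem \ref{t4.3} to $d = 1$, $p = q = r = 2$, $m = 2$, $\kappa = 2k+2$, $\alpha = 0$, $\beta = 2k+2$, $\gamma = 2k$, and $\theta = 1/(k+1) = m/\kappa$, the lower endpoint of the admissible range for $\theta$. The conditions $\alpha,\beta,\gamma > -1/2$ are trivial and $\kappa - m - (d/p) = 2k - \tfrac{1}{2} \notin \bbN_{0}$; a short calculation shows that the balance equation
\[
\tfrac{1}{r} - \tfrac{m-\gamma}{d} = \theta\Big(\tfrac{1}{p} - \tfrac{\kappa - \alpha}{d}\Big) + (1-\theta)\Big(\tfrac{1}{q} + \tfrac{\beta}{d}\Big)
\]
is satisfied, while the two-sided constraint $0 \le \theta \alpha + (1-\theta)\beta - \gamma \le \theta\kappa - m$ collapses to $0 \le 0 \le 0$. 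Theorem \ref{t4.3} then produces the multiplicative bound \eqref{4.54}, after which Young's inequality yields \eqref{4.55}.

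For \eqref{4.56}, I would expand via the Leibniz formula,
\[
\tfrac{d^{2k}}{dx^{2k}}\big(x^{2}f\big) = x^{2} f^{(2k)} + 4k\, x\, f^{(2k-1)} + 2k(2k-1)\, f^{(2k-2)},
\]
and apply Theorem \ref{t4.3} to each of the three summands in turn, with $(m,\gamma)$ running through $(2k,2)$, $(2k-1,1)$, and $(2k-2,0)$, while keeping $d = 1$, $p = q = r = 2$, $\kappa = 2k+2$, $\alpha = 0$, $\beta = 2k+2$ fixed. Since $m - \gamma = 2k - 2$ in all three instances, the balance equation yields one common value of $\theta$, and the noninteger condition $\kappa - m - (d/p) \in \{\tfrac{3}{2}, \tfrac{5}{2}, \tfrac{7}{2}\}$ holds in each case. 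Summing the three resulting multiplicative estimates and invoking Young's inequality produces the additive bound in the second line of \eqref{4.56}.

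The only real obstacle is the bookkeeping verification of the hypotheses of Theorem \ref{t4.3} for each chosen parameter tuple: this is routine algebra, but requires care at the various boundary values, in particular the fact that $\theta$ attains the lower endpoint of the admissible range while the secondary constraint simultaneously realizes one of its endpoints. No further analytic input is needed.
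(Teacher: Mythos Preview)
Your treatment of \eqref{4.54}--\eqref{4.55} is identical to the paper's: both specialize Theorem~\ref{t4.3} with $d=1$, $p=q=r=2$, $m=2$, $\kappa=2k+2$, $\alpha=0$, $\beta=2k+2$, $\gamma=2k$, $\theta=1/(k+1)$, and then pass to the additive form via Young's inequality.

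For \eqref{4.56} you take a genuinely different route. The paper observes that the Fourier transform interchanges multiplication by $x$ and differentiation, so that $\|\tfrac{d^{2k}}{dx^{2k}}(x^{2}f)\|_{L^{2}} = \|\xi^{2k}\hatt f^{\,\prime\prime}\|_{L^{2}}$, and then simply applies the already-established estimate \eqref{4.54} (or \eqref{4.55}) to $\hatt f$ in place of $f$; Plancherel converts the right-hand side back. Your approach instead expands by Leibniz and invokes Theorem~\ref{t4.3} three separate times with $(m,\gamma)\in\{(2k,2),(2k-1,1),(2k-2,0)\}$ and common interpolation parameter $\theta=k/(k+1)$. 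The parameter checks you outline are all correct, so your argument goes through. The paper's Fourier trick is shorter and more conceptual---it exploits the $P\leftrightarrow X$ symmetry of the harmonic oscillator that is the theme of the whole section---while your approach is self-contained and avoids any appeal to the Fourier transform. Note incidentally that both methods produce the multiplicative bound with exponent $k/(k+1)$ on $\|f^{(2k+2)}\|$ and $1/(k+1)$ on $\|x^{2k+2}f\|$, i.e., with the two exponents interchanged relative to the display \eqref{4.56}; this has no effect on the additive bound, which is all that is used downstream.
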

\begin{proof}
Let $k\in\bbN$. From a direct calculation, it can be verified that the choice of parameters $d=1$, $p=q=r=2$, $m=2$, $\gamma=2k$, $\alpha=0$, $\kappa=2k+2$, $\beta=2k+2$, and $\theta=\tfrac{1}{k+1}$ satisfies all the assumptions in Theorem \ref{t4.3}, from which the inequality in \eqref{4.54} follows. The second inequality \eqref{4.55} is an application of Young's inequality for products. Next, we focus on showing \eqref{4.56}. By taking Fourier transforms we get
\begin{align}
\begin{split} 
\left\|\frac{d^{2k}}{dx^{2k}}(x^{2}f)\right\|_{L^2(\bbR)}=&\big\| \xi^{2k}\hatt{f}''\big\|_{L^2(\bbR)} 
\leq C_k \big[\big\|\hatt{f}^{(2k+2)}\big\|_{L^2(\bbR)}+\big\|\xi^{2k+2}\hatt{f}\big\|_{L^2(\bbR)}\big]\\ 
= & C_k \big[\big\|x^{2k+2}{f}\big\|_{L^2(\bbR)}+\big\|{f}^{(2k+2)}\big\|_{L^2(\bbR)}\big], 
\end{split} 
\end{align}
where we used inequality \eqref{4.55} to estimate $\big\| \xi^{2k}\hatt{f}''\big\|_{L^2(\bbR)}$.
\end{proof}

\begin{theorem} \lb{t4.5} Let $P=i\ d/dx$ and $X$ denote the momentum operator and operator of multiplication by $x$ defined on $C_0^\infty(\bbR)$, respectively. Then, for any $k\in\bbN$, there exist $a_k, b_k, s_k, t_k \in (0,\infty)$ such that
\begin{equation} \lb{4.59a}
P^{4k}+X^{4k}\leq a_k \big(P^2+X^2\big)^{2k}+b_k I \leq s_k \big(P^{4k}+X^{4k}\big)+t_k I
\end{equation}
in the quadratic form sense.
\end{theorem}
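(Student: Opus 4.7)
The inequalities will be established in the quadratic form sense on the Schwartz space $\cS(\bbR)$ and then extended by density; setting $H := P^2 + X^2$, they become
\begin{equation*}
\|P^{2k} f\|^2 + \|X^{2k} f\|^2 \le a_k \|H^k f\|^2 + b_k \|f\|^2 \le s_k\bigl(\|P^{2k} f\|^2 + \|X^{2k} f\|^2\bigr) + t_k \|f\|^2,
\end{equation*}
for $f \in \cS(\bbR)$, all norms being the $L^2(\bbR)$-norm. The \emph{right-hand} inequality follows by expanding $H^k f$ as a sum of the $2^k$ noncommutative monomials of length $k$ in $\{P^2, X^2\}$; using $[X,P] = -iI$ to put each monomial in canonical form presents it as a finite linear combination of terms $c\,x^i f^{(j)}$ with $i + j \le 2k$, and Theorem \ref{t4.3} (with $d=1$, $p=q=r=2$) together with Young's inequality yields $\|x^i f^{(j)}\|_{L^2} \le C(\|P^{2k} f\| + \|X^{2k} f\| + \|f\|)$, whence the bound by summing and squaring.

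The \emph{left-hand} inequality is proved by induction on $k$. The base case $k=1$ reduces to the operator identity $(P^2+X^2)^2 = P^4 + X^4 + 2XP^2X - 2I$, verified by a direct $[X,P] = -iI$ calculation; since $XP^2X = (PX)^*(PX) \ge 0$ as a quadratic form, this immediately gives $P^4 + X^4 \le H^2 + 2I$, i.e., $(a_1, b_1) = (1, 2)$. For the inductive step, assume both inequalities at level $k-1$, apply the base-case identity to $g = H^{k-1} f \in \cS(\bbR)$, and drop the nonnegative $2\|PX H^{k-1}f\|^2$ term to obtain
\begin{equation*}
\|H^k f\|^2 \ge \|P^2 H^{k-1} f\|^2 + \|X^2 H^{k-1} f\|^2 - 2\|H^{k-1} f\|^2.
\end{equation*}
Writing $P^2 H^{k-1} f = H^{k-1} P^2 f + [P^2, H^{k-1}] f$ and combining $\|u+v\|^2 \ge \tfrac12 \|u\|^2 - \|v\|^2$ with the inductive first inequality applied to $P^2 f \in \cS(\bbR)$ (and symmetrically to $X^2 f$) produces
\begin{equation*}
\|P^2 H^{k-1} f\|^2 \ge \frac{\|P^{2k} f\|^2}{2 a_{k-1}} - \frac{b_{k-1}\|P^2 f\|^2}{2 a_{k-1}} - \|[P^2, H^{k-1}] f\|^2.
\end{equation*}

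Since $[P^2, H] = [P^2, X^2]$ has total degree $2$ in $(P, X)$, the Leibniz rule gives $[P^2, H^{k-1}] = \sum_{j=0}^{k-2} H^j [P^2, H] H^{k-2-j}$, a polynomial of degree at most $2k-2$; similarly for $[X^2, H^{k-1}]$. Every remaining error term---$\|P^2 f\|^2$, $\|X^2 f\|^2$, $\|[P^2, H^{k-1}] f\|^2$, $\|[X^2, H^{k-1}] f\|^2$, and $\|H^{k-1} f\|^2$ (the last controlled via the inductive second inequality)---is therefore a quadratic form whose integrand is a finite sum of monomials $x^i f^{(j)}$ with $i + j < 2k$. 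For such $(i, j)$, iterating Theorem \ref{t4.3} and applying a three-term Young inequality produces, for any $\epsilon > 0$,
\begin{equation*}
\|x^i f^{(j)}\|^2 \le \epsilon\bigl(\|P^{2k} f\|^2 + \|X^{2k} f\|^2\bigr) + C_\epsilon \|f\|^2,
\end{equation*}
and choosing $\epsilon$ small enough relative to $1/(4 a_{k-1})$ absorbs every error term while preserving a positive coefficient on $\|P^{2k} f\|^2 + \|X^{2k} f\|^2$, closing the induction. The principal obstacle is this bookkeeping: verifying that every commutator and every cross term genuinely has degree strictly less than $2k$ in $(P,X)$, so that the iterated Gagliardo--Nirenberg estimate combined with Young's inequality furnishes an arbitrarily small coefficient on the leading norms and positivity survives each induction step.
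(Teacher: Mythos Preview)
Your proposal is correct, and the organization differs from the paper's in interesting ways.

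\textbf{Base case.} Both arguments rest on the same identity $(P^2+X^2)^2 = P^4 + X^4 + 2PX^2P - 2I$ (equivalently $2XP^2X - 2I$), yielding $a_1 = 1$, $b_1 = 2$.

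\textbf{Right-hand inequality.} The paper proves this by induction: it multiplies the level-$k$ inequality by $P^2+X^2$ on both sides and controls the resulting cross terms via the Cauchy--Schwarz-type bounds $P^2X^{4k+2}+X^{4k+2}P^2 \le P^2X^{4k}P^2 + X^{4(k+1)}$ (and its $P\leftrightarrow X$ twin), then invokes Corollary~\ref{coro:4.4} to estimate $P^2X^{4k}P^2$ and $X^2P^{4k}X^2$. Your direct expansion of $H^k$ into $2^k$ monomials, normal-ordering each via $[X,P]=-iI$ into terms $x^i f^{(j)}$ with $i+j\le 2k$, and bounding every such term by Theorem~\ref{t4.3} with the parameter choice $\kappa=\beta=2k$, $\alpha=0$, $\theta=(2k-i+j)/(4k)$, is a valid shortcut that avoids induction entirely.

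\textbf{Left-hand inequality.} The paper's inductive step multiplies the level-$k$ form inequality by $P^2+X^2$ on each side and then works purely at the operator level, using $[P,[P,X^m]]=-m(m-1)X^{m-2}$, $[X,[X,P^m]]=-m(m-1)P^{m-2}$, and the spectral bound $(P^2+X^2)^2 \le (P^2+X^2)^{2(k+1)}$ to reach level $k+1$. Your route instead applies the base case to $g=H^{k-1}f$, commutes $P^2$ and $X^2$ past $H^{k-1}$, and treats every commutator and cross term as a lower-order remainder (total degree $<2k$ in $(P,X)$) to be absorbed by an $\epsilon$-small Gagliardo--Nirenberg estimate. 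That last step works because for $i+j<2k$ one can factor Theorem~\ref{t4.3} through level $i+j$ (i.e., first bound $\|x^i f^{(j)}\|$ by $\|f^{(i+j)}\|^{j/(i+j)}\|x^{i+j}f\|^{i/(i+j)}$, then interpolate each factor between order $2k$ and order $0$), producing a genuine three-term product $\|f^{(2k)}\|^{j/(2k)}\|x^{2k}f\|^{i/(2k)}\|f\|^{1-(i+j)/(2k)}$ with a strictly positive exponent on $\|f\|$; Young's inequality then gives the $\epsilon$-absorption you claim.

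\textbf{Trade-offs.} The paper's argument is more constructive (it yields explicit recursions $a_{k+1}=2(a_k+b_k)$, $b_{k+1}=2c_k$, etc.), while yours is softer and more modular: once the ``lower-order terms are $\epsilon$-negligible'' principle is in hand, the induction closes without tracking specific operator identities.
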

\begin{proof} We begin by showing the first inequality in \eqref{4.59a}, which we will prove by induction. For the base case $k=1$, consider
\begin{align}
\big(P^2+X^2\big)^2 &=P^4+X^2P^2+P^2X^2+X^4=X^4+P^4+2PX^2P 
+ \big[P,\big[P,X^2\big]\big]   \no \\
& \geq X^4+P^4-2 I, 
\end{align}
where we used that $PX^2P\geq 0$ in the sense of quadratic forms and $\big[P,\big[P,X^m\big]\big]=-m(m-1)X^{m-2}$, which is a consequence of the commutation relation $[P,X]=i I$. Later, we will also use $\big[X,\big[X,P^m]]=-m(m-1)P^{m-2}$. Hence, if $k=1$, then \eqref{4.59a} is satisfied with $a_1=1$ and $b_1=2$.

For the inductive step, assume that \eqref{4.59a} holds up to $k$. Multiplying both sides of \eqref{4.59a} by $P^2+X^2$ from the left and right yields
\begin{equation} \lb{4.61}
\big(P^2+X^2\big)\big(P^{4k}+X^{4k}\big)\big(P^2+X^2\big)\leq a_k\big(P^2+X^2\big)^{2(k+1)}+b_k\big(P^2+X^2\big)^2.
\end{equation}
First, we focus on the left-hand side of this inequality:
\begin{align}
&(P^2+X^2)(P^{4k}+X^{4k})(P^2+X^2)   \no \\
& \quad =P^{4(k+1)}+X^{4(k+1)}+X^2P^{4k}X^2+P^2X^{4k}P^2   \no \\ 
&\qquad+X^2P^{4k+2}+P^{4k+2}X^2+P^2X^{4k+2}+X^{4k+2}P^2 \no \\
& \quad \geq P^{4(k+1)}+X^{4(k+1)}+X^2P^{4k+2}+P^{4k+2}X^2+P^2X^{4k+2}+X^{4k+2}P^2  \no \\
& \quad = P^{4(k+1)}+X^{4(k+1)}+2 P X^{4k+2} P+\big[P,\big[P,X^{4k+2}\big]\big]+2XP^{4k+2}X   \no \\
& \qquad +\big[X,\big[X,P^{4k+2}\big]\big]  \no \\
& \quad \geq P^{4(k+1)}+X^{4(k+1)}+\big[P,\big[P,X^{4k+2}\big]\big]+\big[X,\big[X,P^{4k+2}\big]\big]   \no \\ 
& \quad = P^{4(k+1)}+X^{4(k+1)}-(4k+2)(4k+1)\big(P^{4k}+X^{4k}\big)   \no \\   
& \quad = \frac{1}{2}\big(P^{4(k+1)}+X^{4(k+1)}\big)    \lb{4.70}  \\
& \qquad +\left(\frac{1}{2}P^{4(k+1)}-(4k+2)(4k+1)P^{4k}\right)+\left(\frac{1}{2}X^{4(k+1)}-(4k+2)(4k+1)X^{4k}\right)    \no \\ 
& \quad \geq \frac{1}{2}\big(P^{4(k+1)}+X^{4(k+1)}\big)-c_k I,      \lb{4.71}
\end{align}
where $c_k>0$ is a constant such that
\begin{equation}
\frac{1}{2}t^{4(k+1)}-(4k+2)(4k+1)t^{4k}\geq -\frac{c_k}{2}, \quad t\in\bbR ,
\end{equation}
which ensures that the second and third term in \eqref{4.70} are each bounded from below by $-(c_k/2) I$,  thus resulting in the final estimate \eqref{4.71}. Applying this estimate to \eqref{4.61} then yields
\begin{equation}
\frac{1}{2}\big(P^{4(k+1)}+X^{4(k+1)}\big)-c_k I \leq a_k\big(P^2+X^2\big)^{2k}+b_k I.\lb{4.73}
\end{equation}
Next, we focus on the right-hand side of \eqref{4.73}. Here, we will use the fact that for any $f\in C_0^\infty(\bbR)$:
\begin{align} 
\begin{split} 
\Big( f, \big(P^2+X^2\big)^2f\Big)_{L^2(\bbR)} &=\|T_{HO}f\|_{L^2(\bbR)}^2\leq \big\|T_{HO}^{k+1}f\big\|_{L^2(\bbR)}^2     \\ 
& = \Big(f, (P^2+X^2)^{2(k+1)}f\Big)_{L^2(\bbR)},
\end{split}
\end{align}
where the inequality $\|T_{HO}f\|_{L^2(\bbR)}^2 \leq \big\|T_{HO}^{k+1}f\big\|_{L^2(\bbR)}^2$ follows from the fact that the spectrum of $T_{HO}$ is given by $\{(2n+1)\}_{n=0}^\infty$. Consequently, we may estimate $\big(P^2+X^2\big)^2 \leq \big(P^2+X^2\big)^{2(k+1)}$ for $k\in\bbN$. Thus, \eqref{4.73} can be estimated by
\begin{align} 
\begin{split} 
\frac{1}{2}\big(P^{4(k+1)}+X^{4(k+1)}\big)-c_k I & \leq a_k\big(P^2+X^2\big)^{2(k+1)}+b_k\big(P^2+X^2\big)^{2}   \\
&\leq (a_k+b_k)\big(P^2+X^2\big)^{2(k+1)},
\end{split}
\end{align}
which is equivalent to
\begin{equation}
\big(P^{4(k+1)}+X^{4(k+1)}\big) \leq  2(a_k+b_k)\big(P^2+X^2\big)^{2(k+1)}+2c_k I,
\end{equation}
and thus \eqref{4.59a} holds for $k+1$ with $a_{k+1}=2(a_k+b_k)$ and $b_{k+1}=2c_k$.

Next, we will show the second inequality in \eqref{4.59a}:
\begin{equation} \lb{4.77}
a_k\big(P^2+X^2\big)^{2k}+b_k I \leq s_k\big(P^{4k}+X^{4k}\big)+t_k I,
\end{equation}
once again by induction. 

For what follows, it will be useful to note that for any $k\in\bbN_0$ and any $f\in C_0^\infty(\bbR)$:
\begin{align}
& \big( f,\big(P^2X^{4k+2}+X^{4k+2}P^2\big)f\big)_{L^2(\bbR)}    \no \\
& \quad = 2\Re \big(f,P^2X^{4k+2}f\big)_{L^2(\bbR)} \no \\
& \quad \leq 2\big| \big(f,P^2X^{4k+2}f\big)_{L^2(\bbR)}\big|   \no \\
& \quad = 2\big|\big(X^{2k}P^2f,X^{2k+2}f\big)_{L^2(\bbR)}\big|    \no \\
& \quad \leq \big\|X^{2k}P^2f\big\|_{L^2(\bbR)}^2+\big\|X^{2k+2}f\big\|_{L^2(\bbR)}^2    \no \\
& \quad = \big(f, \big(P^2X^{4k}P^2+X^{4(k+1)}\big)f\big)_{L^2(\bbR)},
\end{align}
and consequently
\begin{equation} \lb{4.81}
P^2X^{4k+2}+X^{4k+2}P^2\leq P^2X^{4k}P^2+X^{4(k+1)}
\end{equation}
in the quadratic form sense. Similarly, one obtains
\begin{equation} \lb{4.82}
X^2P^{4k+2}+P^{4k+2}X^2\leq X^2P^{4k}X^2+P^{4(k+1)}.
\end{equation}
For the base case $k=1$, one has $a_1=1$ and $b_1=2$. One then gets
\begin{equation} \lb{4.83}
\big(P^2+X^2\big)^2+2 I=P^4+P^2X^2+X^2P^2+X^4+2 \leq 2 \big(P^4+X^4\big)+2 I,
\end{equation}
where we used \eqref{4.81} for $k=0$. This shows the base case with $s_1=2$ and $t_1=2$. For the inductive step, suppose \eqref{4.77} holds up to $k$. Multiplying both sides of \eqref{4.77} by $\big(P^2+X^2\big)$ then yields
\begin{align} \lb{4.84}
\begin{split} 
& a_k\big(P^2+X^2\big)^{2(k+1)}+b_k\big(P^2+X^2\big)^2    \\
& \quad \leq s_k\big(P^2+X^2\big)\big(P^{4k}+X^{4k}\big)\big(P^2+X^2\big)+t_k\big(P^2+X^2\big)^2.
\end{split} 
\end{align}
First, we focus on the term $\big(P^2+X^2\big)\big(P^{4k}+X^{4k}\big)\big(P^2+X^2\big)$:
\begin{align}
&\big(P^2+X^2\big)\big(P^{4k}+X^{4k}\big)\big(P^2+X^2\big)    \no \\
& \quad = P^{4(k+1)}+X^{4(k+1)}+X^2P^{4k}X^2+P^2X^{4k}P^2    \no \\ 
&\qquad +X^2 P^{4k+2}+P^{4k+2}X^2+P^2X^{4k+2}+X^{4k+2}P^2     \no \\
& \quad \leq 2 P^{4(k+1)}+2X^{4(k+1)}+2X^2P^{4k}X^2+2P^2X^{4k}P^2, \lb{4.88}
\end{align}
where the last inequality follows from \eqref{4.81} and \eqref{4.82}. It is now a consequence of Corollary \ref{coro:4.4}, that
\begin{align}
X^2P^{4k}X^2\leq 2C_k\big(P^{4(k+1)}+X^{4(k+1)}\big)\, \text{ and } \, P^2X^{4k}P^2 
\leq 2C_k\big(P^{4(k+1)}+X^{4(k+1)}\big),
\end{align}
which we use to further estimate \eqref{4.88} by 
\begin{equation}
2 P^{4(k+1)}+2X^{4(k+1)}+2X^2P^{4k}X^2+2P^2X^{4k}P^2\leq (2+4C_k) \big(P^{4(k+1)}+X^{4(k+1)}\big).
\end{equation}
Summarizing the previous steps, we have shown that 
\begin{equation} \lb{4.91}
\big(P^2+X^2\big)\big(P^{4k}+X^{4k}\big)\big(P^2+X^2\big)\leq (2+4C_k) \big(P^{4(k+1)}+X^{4(k+1)}\big).
\end{equation}
Using again that $\big(P^2+X^2\big)^2\geq I$, we use \eqref{4.91} to estimate \eqref{4.84}, 
\begin{align}
&a_k \big(P^2+X^2\big)^{2(k+1)}+b_k I \leq a_k \big(P^2+X^2\big)^{2(k+1)}+b_k \big(P^2+X^2\big)^2 \no \\ 
& \quad \leq s_k(2+4C_k) \big(P^{4(k+1)}+X^{4(k+1)}\big)+t_k \big(P^2+X^2\big)^2  \no \\   
& \quad \leq s_k(2+4C_k) \big(P^{4(k+1)}+X^{4(k+1)}\big)+2t_k \big(P^4+X^4\big)  \no \\    
& \quad \leq [s_k(2+4C_k+2t_k] \big(P^{4(k+1)}+X^{4(k+1)}\big)+2t_k I,
\end{align}
where we have used $\big(P^2+X^2\big)^2\leq 2\big(P^4+X^4\big)$ by \eqref{4.83} and $X^4\leq X^{4(k+1)}+I$ as well as $P^4\leq P^{4(k+1)}+I$ for the last step.

Consequently, there exist $s_{k+1}, t_{k+1} \in (0,\infty)$ such that 
\begin{equation}
a_{k+1} \big(P^2+X^2\big)^{2(k+1)}+b_{k+1} I \leq s_{k+1} \big(P^{4(k+1)}+X^{4(k+1)}\big)+t_{k+1} I,
\end{equation}
finishing the proof.
\end{proof}

The commutation technique exploited in the proof of Theorem \ref{t4.5} goes back to Glimm and Jaffe \cite{GJ69} (see also their use in \cite{Si70}). 

\begin{theorem} \lb{t4.6}
For any $k\in\mathbb{N}$, the space of compactly supported smooth functions is a core of $T_{HO}^k$.
\end{theorem}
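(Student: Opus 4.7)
The plan is to show that $C_0^\infty(\bbR) \subseteq \dom\big(T_{HO}^k\big)$ is dense in $\dom\big(T_{HO}^k\big)$ with respect to the graph norm, equivalently that the closure of $T_{HO}^k\big|_{C_0^\infty(\bbR)}$ equals $T_{HO}^k$. We proceed in two stages: first, use the Hermite eigenfunction expansion to show the finite linear span $\cF := \mathrm{span}\{u_m\}_{m \in \bbN_0}$ is a core for $T_{HO}^k$; second, approximate each Schwartz-class function (in particular, each $u_m$) in the graph norm by compactly supported smooth cutoffs. A diagonal extraction combining both stages then yields the conclusion.

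For the first stage, since $\{u_m\}_{m\in\bbN_0}$ is an orthonormal basis of $L^2(\bbR)$ consisting of Schwartz-class eigenfunctions of $T_{HO}$ with eigenvalues $2m+1$, any $f \in \dom\big(T_{HO}^k\big)$ admits the expansion $f = \sum_{m \in \bbN_0} c_m u_m$ with $\sum_m (2m+1)^{2k} |c_m|^2 < \infty$. The truncations $f_N := \sum_{m=0}^N c_m u_m \in \cF \subseteq \cS(\bbR)$ then satisfy
$\big\|T_{HO}^k(f - f_N)\big\|_{L^2(\bbR)}^2 = \sum_{m > N}(2m+1)^{2k}|c_m|^2 \to 0$ as $N \to \infty$, so $\cF$ is a core for $T_{HO}^k$ and the task reduces to approximating Schwartz functions by $C_0^\infty(\bbR)$ functions in the graph norm.

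For the second stage, fix a cutoff $\phi \in C_0^\infty(\bbR)$ with $\phi \equiv 1$ on $[-1,1]$, $\supp(\phi) \subseteq [-2,2]$, $0 \leq \phi \leq 1$, and for $g \in \cS(\bbR)$ set $\phi_n(x) := \phi(x/n)$ and $g_n := \phi_n g \in C_0^\infty(\bbR)$. The upper bound in Theorem \ref{t4.5} yields the quadratic form inequality $T_{HO}^{2k} \leq C_1 \big(P^{4k} + X^{4k}\big) + C_2 I$ on $\cS(\bbR)$, and applied to $h := g - g_n = (1-\phi_n)g \in \cS(\bbR)$ this becomes
\begin{equation*}
\big\|T_{HO}^k(g - g_n)\big\|_{L^2(\bbR)}^2 \leq C_1 \bigl[\|h^{(2k)}\|_{L^2(\bbR)}^2 + \|x^{2k} h\|_{L^2(\bbR)}^2\bigr] + C_2 \|h\|_{L^2(\bbR)}^2.
\end{equation*}
The $L^2$-term and the weighted term $\|x^{2k}h\|_{L^2(\bbR)}^2$ vanish as $n \to \infty$ by dominated convergence, with respective integrable majorants $|g|^2$ and $|x|^{4k}|g|^2$.

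The main obstacle is the control of $\|h^{(2k)}\|_{L^2(\bbR)}^2$, which we handle via the Leibniz expansion $h^{(2k)} = \sum_{j=0}^{2k}\binom{2k}{j}(1-\phi_n)^{(j)}g^{(2k-j)}$. The $j=0$ term $(1-\phi_n)g^{(2k)}$ vanishes in $L^2$ by dominated convergence. For $j \geq 1$, $(1-\phi_n)^{(j)} = -n^{-j}\phi^{(j)}(\cdot/n)$ is bounded pointwise by $\|\phi^{(j)}\|_\infty n^{-j}$ and supported in $\{n \leq |x| \leq 2n\}$; since $g^{(2k-j)} \in \cS(\bbR) \subset L^2(\bbR)$, the corresponding $L^2$-norm vanishes with rate at least $n^{-j}$. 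Thus all Leibniz terms vanish in $L^2$, giving $\|h^{(2k)}\|_{L^2(\bbR)}^2 \to 0$, whence $g_n \to g$ in the graph norm of $T_{HO}^k$. Combining with stage one through a diagonal sequence then produces, for every $f \in \dom\big(T_{HO}^k\big)$, a sequence in $C_0^\infty(\bbR)$ that converges to $f$ in the graph norm.
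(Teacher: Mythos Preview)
Your proof is correct but takes a genuinely different route from the paper. The paper invokes Chernoff's hyperbolic-equation criterion (Theorem~\ref{t4.7}): one writes the abstract wave equation $u_{tt} = -Tu$ for $T = T_{HO}|_{C_0^\infty(\bbR)}$, recasts it as a first-order system $v_t = iA_0 v$ on $\cK_0 = L^2(\bbR)\oplus\cH_T$, and uses that the wave equation for $\tau_{HO}$ has unique global solutions preserving $C_0^\infty(\bbR)$ (finite propagation speed) to produce a unitary group leaving $\cD_0 = C_0^\infty(\bbR)\oplus C_0^\infty(\bbR)$ invariant and commuting with $A_0$. Chernoff's lemma then yields essential self-adjointness of every power $A_0^n$ on $\cD_0$, and since $(iA_0)^{2n}=\mathrm{diag}(T^n,T^n)$ one obtains the result for $T_{HO}^n$.

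Your argument is more elementary and self-contained: you exploit the explicit Hermite eigenbasis to get $\cF=\mathrm{span}\{u_m\}$ as a core, then approximate each Schwartz function by smooth cutoffs, controlling the graph norm via the form inequality of Theorem~\ref{t4.5}. This avoids any PDE input. One small point you should make explicit: Theorem~\ref{t4.5} is stated only on $C_0^\infty(\bbR)$, yet you apply it to $h=(1-\phi_n)g\in\cS(\bbR)\setminus C_0^\infty(\bbR)$. This is harmless---the proof of Theorem~\ref{t4.5} uses only the commutation relation $[P,X]=iI$, positivity of $PX^{2j}P$ and $XP^{2j}X$, and Corollary~\ref{coro:4.4}, all of which hold verbatim on $\cS(\bbR)$ since $\cS(\bbR)$ is invariant under both $P$ and $X$---but a sentence noting this would close the gap. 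The trade-off: Chernoff's method is more robust (it extends to Laplace--Beltrami operators on complete manifolds without any spectral information), while your approach is specific to $T_{HO}$ but stays entirely within the tools already developed in Section~\ref{s4}.
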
 

This follows, for instance, from Chernoff \cite[Lemma~2.1 and Sect.~4]{Ch73}. We very briefly sketch the argument. First, Chernoff proves the following result:

\begin{theorem} $($\cite[Lemma~2.1]{Ch73}$)$ \lb{t4.7} 
Let $A$ be a symmetric operator on $\cD$ in $\cK$, leaving $\cD$ invariant, $A \cD \subseteq \cD$. Suppose that there exists a one-parameter unitary family $V(t)$, $t \in [0,\infty)$ on $\cK$ leaving $\cD$ invariant and commuting with $A$ on $\cD$,
\begin{equation}
V(t) \cD \subseteq \cD, \quad V(t) A v = A V(t) v, \quad v \in \cD, \; t \in [0,\infty).  
\end{equation}
If 
\begin{equation}
\f{d}{dt} V(t) v = i A V(t) v, \quad v \in \cD,
\end{equation}
then for each $n \in\bbN$, $A^n$ is essentially self-adjoint on $\cD$ in $\cK$. 
\end{theorem}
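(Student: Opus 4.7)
The plan is to invoke Stone's theorem to replace $V(t)$ by the unitary group $\{e^{itB}\}$ generated by a self-adjoint operator $B$ extending $A|_{\cD}$, and then reduce essential self-adjointness of $A^n|_{\cD}$ to showing that $\cD$ is a core for $B^n$. First, I would extend the unitary family $\{V(t)\}_{t \ge 0}$ to a strongly continuous one-parameter unitary group $\{\widetilde V(t)\}_{t \in \bbR}$ by setting $\widetilde V(-t) := V(t)^*$, $t > 0$, and apply Stone's theorem to obtain a unique self-adjoint operator $B$ on $\cK$ with $\widetilde V(t) = e^{itB}$. Comparing the infinitesimal relation $(d/dt) V(t) v|_{t = 0} = iAv$ for $v \in \cD$ with the Stone generator then gives $\cD \subseteq \dom(B)$ and $Bv = Av$ on $\cD$; in particular, $A|_{\cD} \subseteq B$, and $A^n|_{\cD}$ is essentially self-adjoint precisely when $\cD$ is a core for the self-adjoint operator $B^n$.

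The core property would be established by a G\aa rding-type mollification tailored to the one-sided invariance of $\cD$. For $\phi \in C_c^\infty((0,\infty))$ and $v \in \cK$, I would set $v_\phi := \int_0^\infty \phi(t)\, V(t) v\, dt$ (Bochner integral). Repeated integration by parts using $(d/dt) V(t) v = iB V(t) v$ and vanishing boundary terms would yield
\begin{equation}
v_\phi \in \bigcap_{k \in \bbN} \dom\big(B^k\big), \quad B^k v_\phi = i^k \int_0^\infty \phi^{(k)}(t)\, V(t) v\, dt, \quad k \in \bbN,
\end{equation}
together with the alternate representation $B^k v_\phi = \int_0^\infty \phi(t)\, V(t) B^k v\, dt$ for $v \in \dom\big(B^k\big)$, which exploits commutativity of $B$ with $V(t)$ on $\dom\big(B^k\big)$. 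A rescaled one-sided approximate identity $\phi_\eps(t) := \eps^{-1}\phi(t/\eps)$ then produces $v_{\phi_\eps} \to v$ and $B^k v_{\phi_\eps} \to B^k v$ in $\cK$ for every $v \in \dom(B^n)$ and $0 \le k \le n$, so it is enough to show that each $v_\phi$ lies in the $B^n$-graph closure of $\cD$.

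For $v \in \cD$, I would approximate $v_\phi$ by Riemann sums $S_N := \sum_j \phi(t_j)\, V(t_j) v\, \Delta t_j$ with $t_j \in \supp(\phi) \subset (0,\infty)$: the invariance $V(t)\cD \subseteq \cD$ for $t \ge 0$ places $S_N \in \cD$, and $A V(t) = V(t) A$ on $\cD$ yields $A^k S_N = \sum_j \phi(t_j)\, V(t_j) A^k v\, \Delta t_j \to B^k v_\phi$ by the same Riemann-integral convergence. For general $v \in \cK$, density of $\cD$ supplies $w_j \in \cD$ with $w_j \to v$, and the explicit formula above shows $B^k (w_j)_\phi \to B^k v_\phi$, reducing to the case $v \in \cD$ and completing the argument. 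The hard part will be the Stone-theorem step itself---ensuring the extension of $V(t)$ to a unitary group, the identification of its self-adjoint generator $B$, and the verification that $\cD \subseteq \dom(B)$ with $B|_{\cD} = A|_{\cD}$---all of which rest on using unitarity of $V(t)$ to control the difference quotients $t^{-1}[V(t) v - v]$ in $\cK$ and invoking the standard Stone domain characterization. Once those analytic ingredients are in place, the mollification and Riemann-sum bookkeeping go through routinely via the strong continuity of $V(t)$ and closedness of $B^n$.
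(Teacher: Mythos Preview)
The paper does not supply its own proof of Theorem~\ref{t4.7}; it is quoted verbatim from Chernoff \cite[Lemma~2.1]{Ch73} and then \emph{applied} to establish Theorem~\ref{t4.6}. So there is no ``paper's proof'' to compare against directly.

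Your Stone-theorem/G\aa rding route is correct in outline, and in fact well adapted to the one-sided hypothesis $t\in[0,\infty)$, but one step is not as innocuous as you suggest: before you can extend $V$ to a strongly continuous unitary group via $\widetilde V(-t)=V(t)^*$ and invoke Stone, you must know that $\{V(t)\}_{t\ge0}$ is a \emph{semigroup}, and the stated hypotheses do not include $V(s+t)=V(s)V(t)$. This can, however, be recovered from the other assumptions: for $v\in\cD$, the difference $w(t):=V(s+t)v-V(t)V(s)v$ lies in $\cD$, satisfies $w'(t)=iAw(t)$, and has $w(0)=0$; symmetry of $A$ gives $\tfrac{d}{dt}\|w(t)\|^2=2\Re(iAw,w)=0$, whence $w\equiv0$. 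Strong continuity follows from differentiability on the dense set $\cD$ together with unitarity. Once these points are filled in, your mollification and Riemann-sum arguments go through as written.

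For comparison, Chernoff's original argument is shorter and avoids Stone's theorem entirely. One takes $u\in\ker\big((A^n|_\cD)^*-\lambda\big)$ with $\lambda\notin\bbR$, sets $f(t)=(V(t)v,u)$ for $v\in\cD$, and observes from $\tfrac{d^k}{dt^k}V(t)v=i^kA^kV(t)v$ that $f$ satisfies the constant-coefficient ODE $f^{(n)}=(-i)^n\lambda\,f$. Since $\lambda\notin\bbR$, none of the characteristic roots are purely imaginary, while unitarity of $V(t)$ forces $|f(t)|\le\|u\|\|v\|$; boundedness on the whole line then kills $f$, so $(v,u)=f(0)=0$ for every $v\in\cD$, and $u=0$. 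Your approach trades this ODE trick for heavier machinery (Stone, mollifiers, Riemann sums), but has the advantage of isolating the self-adjoint extension $B$ explicitly and showing $\cD$ is a core for each $B^n$, which is a slightly stronger conclusion.
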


Following Chernoff in \cite[Sect.~4]{Ch73}, one  then argues as follows: Consider 
\begin{equation}
T = \big[\big(-d^2\big/dx^2\big) + x^2\big]\big|_{C_0^{\infty}(\bbR)} \geq I 
\end{equation} 
in $L^2(\bbR)$. Then the hyperbolic partial differential equation
\begin{align}
\begin{split} 
& u_{tt} = - T u, \; t \in [0,\infty),    \lb{4.90} \\
& u(0,\dott) = u_1, \; u_t(0,\dott) = u_2, \quad u_k \in C_0^{\infty}(\bbR), \; k=1,2, 
\end{split} 
\end{align}
is known to have a unique global solution satisfying $u(t,\dott) \in C_0^{\infty}(\bbR)$ for each $t \in [0,\infty)$.

Next, one considers the equivalent first-order system 
\begin{align}
\begin{split}
& \f{d}{dt} \begin{pmatrix} u_t(t,\dott) \\ u(t,\dott) \end{pmatrix} = \begin{pmatrix} 0 & - T \\ I & 0 \end{pmatrix} \begin{pmatrix} u_t(t,\dott) \\ u(t,\dott) \end{pmatrix}, \quad t \in [0,\infty),      \lb{4.91a} \\
& \begin{pmatrix} u_t(0,\dott) \\ u(0,\dott) \end{pmatrix} =  \begin{pmatrix} u_2 \\ u_1 \end{pmatrix} 
\in \cD_0 := C_0^{\infty}(\bbR) \oplus C_0^{\infty}(\bbR).
\end{split}
\end{align}
Introducing the Hilbert space $\cK_0 = L^2(\bbR) \oplus \cH_T$, with 
\begin{equation}
\cH_T = \ol{C_0^{\infty}(\bbR)}^{\, \| \dott\|_{\cH_T}}, \quad (f,g)_{\cH_T} = (f, T g)_{L^2(\bbR)}, 
\end{equation} 
the problem \eqref{4.91a} can be rewritten as  
\begin{align}
\begin{split} 
& v_t(t) = i A_0 v(t), \; t \in [0,\infty), \quad v(0) = \begin{pmatrix} u_2 \\ u_1 \end{pmatrix} \in \cD_0,     \lb{4.93} \\
& A := - i \begin{pmatrix} 0 & - T \\ I & 0 \end{pmatrix}, \quad \dom(A) = \cD_0.   
\end{split} 
\end{align}
An elementary computation shows that $A$ on $\cD_0$ is symmetric in $\cK_0$. Moreover, given the equivalence of \eqref{4.90} and \eqref{4.91a} and hence that of \eqref{4.90} and \eqref{4.93}, one concludes that \eqref{4.93} gives rise to a unique and globally defined unitary solution operator $V_0(t)$, $t \in [0,\infty)$, in $\cK$ that leaves $\cD_0$ invariant and commutes with $A_0$ on $\cD_0$; in particular, $v(t) = V_0(t) v(0)$, $t \in [0,\infty)$. Thus, Theorem \ref{t4.6} applies to $A_0$ in $\cK_0$ and hence all integer powers of $A_0$ are essentially self-adjoint on $\cD_0$ in $\cK_0$. Since 
\begin{equation}
(i A_0)^{2n} = \begin{pmatrix} T^n & 0 \\ 0 &T^n \end{pmatrix} \, \text{ on } \, \cD_0, \quad n \in \bbN, 
\end{equation}
also 
\begin{equation} 
T^n, \; n \in \bbN, \, \text{ is essentially self-adjoint on } \,  C_0^{\infty}(\bbR) \, \text{ in } \, L^2(\bbR).
\end{equation} 

As pointed out by Chernoff, this strategy extends to appropriate Dirac-type and Laplace--Beltrami operators  $T$ on complete Riemannian manifolds and certain $n$-dimensional second-order elliptic differential operators bounded from below. Earlier results in this direction appeared in \cite{Co72} and \cite{Tr68}.

\begin{corollary}
For any $k\in\bbN$, we have
\begin{equation}
\dom\big(T_{HO}^k\big)=H^{2k}(\bbR)\cap\dom\big(X^{2k}\big). 
\end{equation}
\end{corollary}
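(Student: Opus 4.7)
My plan is to realize both sides of the claimed equality as the completion of $C_0^\infty(\bbR)$ with respect to two norms that I will show are equivalent on $C_0^\infty(\bbR)$, so that the set-theoretic equality inside $L^2(\bbR)$ follows at once.

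For the left-hand side, Theorem \ref{t4.6} asserts that $C_0^\infty(\bbR)$ is a core for $T_{HO}^k$, so $\dom(T_{HO}^k)$ equals the completion of $C_0^\infty(\bbR)$ in $L^2(\bbR)$ under the graph norm $f\mapsto \big(\|f\|_{L^2(\bbR)}^2 + \|T_{HO}^k f\|_{L^2(\bbR)}^2\big)^{1/2}$. Taking quadratic-form expectations of the two inequalities in Theorem \ref{t4.5} against $f\in C_0^\infty(\bbR)$ and using the identities $(f,P^{4k}f)_{L^2(\bbR)}=\|P^{2k}f\|_{L^2(\bbR)}^2$, $(f,X^{4k}f)_{L^2(\bbR)}=\|X^{2k}f\|_{L^2(\bbR)}^2$, and $(f,T_{HO}^{2k}f)_{L^2(\bbR)}=\|T_{HO}^k f\|_{L^2(\bbR)}^2$ yields a two-sided bound which, combined with the elementary Fourier-side equivalence $\|f\|_{H^{2k}(\bbR)}^2 \sim \|f\|_{L^2(\bbR)}^2 + \|P^{2k} f\|_{L^2(\bbR)}^2$, shows that the graph norm is equivalent on $C_0^\infty(\bbR)$ to $N(f) := \big(\|f\|_{H^{2k}(\bbR)}^2 + \|X^{2k} f\|_{L^2(\bbR)}^2\big)^{1/2}$.

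For the right-hand side, I would apply Lemma \ref{lemma:4.2} with $n=1$, $\theta=2k$, and $V(x)=1+x^2$, which satisfies $V\in C^1(\bbR)$ and $V>0$; this identifies $H^{2k}_V(\bbR)=H^{2k}(\bbR)\cap \dom\big((1+X^2)^k\big)$. Since $(1+x^2)^k$ and $1+x^{2k}$ are comparable up to multiplicative constants, $\dom\big((1+X^2)^k\big)=\dom(X^{2k})$, with equivalent graph norms. By definition, $H^{2k}_V(\bbR)$ is the completion of $C_0^\infty(\bbR)$ in a norm equivalent to $N$. Since both completions embed into $L^2(\bbR)$ (each norm controls $\|\cdot\|_{L^2(\bbR)}$) and arise from equivalent norms on the common dense subspace $C_0^\infty(\bbR)$, they coincide as subsets of $L^2(\bbR)$.

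The only substantive difficulty, namely the density of $C_0^\infty(\bbR)$ in $H^{2k}(\bbR)\cap\dom(X^{2k})$ with respect to the combined norm $N$, is precisely the content of Lemma \ref{lemma:4.2} and is handled there by a cutoff-and-mollification argument. Theorems \ref{t4.5} and \ref{t4.6} then serve as the nontrivial analytic inputs which make the identification of the two completions automatic.
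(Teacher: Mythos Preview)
Your proposal is correct and follows essentially the same approach as the paper: use Theorem~\ref{t4.5} to obtain equivalence on $C_0^\infty(\bbR)$ between the graph norm of $T_{HO}^k$ and the norm $N$, use Theorem~\ref{t4.6} to identify $\dom(T_{HO}^k)$ as the closure of $C_0^\infty(\bbR)$ in the graph norm, and use Lemma~\ref{lemma:4.2} to identify the closure in $N$ with $H^{2k}(\bbR)\cap\dom(X^{2k})$. Your choice of $V(x)=1+x^2$ (rather than $V(x)=x^2$) to ensure the strict positivity hypothesis of Lemma~\ref{lemma:4.2}, followed by the observation $\dom((1+X^2)^k)=\dom(X^{2k})$, is a minor but welcome bit of extra care; the paper handles this point more informally.
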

\begin{proof} From Theorem \ref{t4.5} it follows that the graph norm of $T_{HO}^k$ is equivalent to the norm 
\begin{equation}
\|f\|_{H_{x^{2k}}^{2k}(\bbR)}^2=\|f\|^2_{H^{2k}(\bbR)}+ \big\|X^{2k}f\big\|^2.
\end{equation}
Hence the closure of $C_0^\infty(\bbR)$ with respect to the graph norm of $T_{HO}^k$ is equal to the closure of $C_0^\infty(\bbR)$ with respect to $\|\cdot\|_{H_{x^{2k}}^{2k}(\bbR)}$. But by Theorem \ref{t4.6}, the former is equal to $\dom\big(T_{HO}^k\big)$, while Lemma \ref{lemma:4.2} implies that the latter is equal to $H^{2k}(\bbR)\cap\dom\big(X^{2k}\big)$, completing the proof.
\end{proof}

We will use the following general result by Triebel:
\begin{theorem} $($\cite[Satz 1]{Tr69}$)$
Let $V \in C^1(\bbR ^n)$ with $V(x) \geq \varepsilon >0$ for all $x\in\bbR ^n$.
Suppose that there is $C \in (0,\infty)$ such that
\begin{equation}
|\nabla V(x)| \leq C V(x).
\end{equation}
Moreover, assume that for every $K \in (0,\infty)$, there exists $N_K \in (0,\infty)$ such that $V(x) \geq K$ whenever $|x|\geq N_K$ for $x\in\bbR^n$. Then for $m\in\bbN$, $\theta \in (0,1)$, the interpolation space $\big(L^2(\bbR ^n), H_V^m(\bbR ^n)\big)_{\theta,2}$ is given by $H^{m\theta}_V(\bbR ^n)$, that is,
\begin{equation}
\big(L^2(\bbR ^n), H_V^m(\bbR ^n)\big)_{\theta,2} = H^{m\theta}_V(\bbR ^n), \quad 
m \in \bbN, \; \theta \in (0,1). 
\end{equation}
\lb{prop:Satz1}
\end{theorem}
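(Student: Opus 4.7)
My approach is to reduce the interpolation identity to the abstract formula $\big(L^2, \dom\big(A^{1/2}\big)\big)_{\theta,2} = \dom\big(A^{\theta/2}\big)$ for an auxiliary nonnegative self-adjoint operator $A$, and then identify the resulting fractional power domain with $H^{m\theta}_V(\bbR^n)$ using the gradient hypothesis on $V$.

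First, I would construct the relevant self-adjoint operator via form methods. Introduce the sesquilinear form
\begin{equation*}
Q(u,v) = \langle u,v\rangle_{H^m(\bbR^n)} + \big(V^{m/2}u,\, V^{m/2}v\big)_{L^2(\bbR^n)},
\end{equation*}
with form domain $H^m(\bbR^n) \cap \dom\big(V^{m/2}\big) = H_V^m(\bbR^n)$ (the latter identification by Lemma \ref{lemma:4.2}). Since $V \geq \varepsilon > 0$, the form $Q$ is densely defined on $L^2(\bbR^n)$, positive, and closed; by the first representation theorem it is generated by a unique self-adjoint operator $\mathcal{B} \geq c I$ satisfying $\dom\big(\mathcal{B}^{1/2}\big) = H_V^m(\bbR^n)$ and $\big\|\mathcal{B}^{1/2}u\big\|_{L^2}^2 = Q(u,u)$. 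The standard Lions--Peetre identity (a direct consequence of the spectral theorem) then yields
\begin{equation*}
\big(L^2(\bbR^n), H_V^m(\bbR^n)\big)_{\theta,2} = \big(L^2, \dom\big(\mathcal{B}^{1/2}\big)\big)_{\theta,2} = \dom\big(\mathcal{B}^{\theta/2}\big), \quad \theta \in (0,1),
\end{equation*}
so the task reduces to proving $\dom\big(\mathcal{B}^{\theta/2}\big) = H_V^{m\theta}(\bbR^n)$ with equivalent norms.

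The crucial step is the commutation of real interpolation with the intersection,
\begin{equation*}
\big(L^2,\, H^m \cap \dom\big(V^{m/2}\big)\big)_{\theta,2} = \big(L^2, H^m\big)_{\theta,2} \cap \big(L^2, \dom\big(V^{m/2}\big)\big)_{\theta,2},
\end{equation*}
whose right-hand side equals $H^{m\theta}(\bbR^n) \cap \dom\big(V^{m\theta/2}\big) = H_V^{m\theta}(\bbR^n)$ by classical Sobolev interpolation and the functional calculus of the (self-adjoint) multiplication operator $V$. The inclusion $\subseteq$ is immediate from monotonicity of the K-functional. The reverse inclusion is the heart of the matter: for a given $u$ in both $(L^2,H^m)_{\theta,2}$ and $\big(L^2,\dom\big(V^{m/2}\big)\big)_{\theta,2}$, one must produce a single K-functional decomposition $u = u_0(t) + u_1(t)$ in which $u_1(t) \in H^m \cap \dom\big(V^{m/2}\big)$ and both $\|u_1(t)\|_{H^m}$ and $\big\|V^{m/2}u_1(t)\big\|_{L^2}$ are controlled \emph{simultaneously}.

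The main obstacle is precisely this simultaneous decomposition, and the hypothesis $|\nabla V| \leq C V$ is what makes it available. It says that $\log V$ is Lipschitz, so $V$ varies by at most a bounded factor on unit balls and multiplication by $V^\alpha$ almost commutes with fractional differentiation, with commutator terms controlled by $|\nabla V|/V$. Concretely, I would introduce a locally finite smooth partition of unity $\{\chi_j\}_{j}$ adapted to level sets of $V$ (so that $V$ is comparable to a constant $V_j$ on $\supp(\chi_j)$, uniformly in $j$), decompose $u = \sum_j \chi_j u$, invoke classical unweighted Sobolev interpolation on each patch with the fixed weight $V_j$, and then reassemble the patchwise decompositions. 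The uniform bound on $|\nabla V|/V$ together with the coercivity $V(x) \to \infty$ as $|x| \to \infty$ ensures that the sum converges with the correct norms, giving the reverse inclusion and completing the identification $\dom\big(\mathcal{B}^{\theta/2}\big) = H^{m\theta}_V(\bbR^n)$.
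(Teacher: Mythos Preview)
The paper does not prove this theorem; it is quoted verbatim from Triebel \cite[Satz~1]{Tr69} and used as a black box, so there is no in-paper argument to compare against. I will therefore assess your proposal on its own merits.

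Your first reduction is circular. You introduce the form operator $\mathcal{B}$ with $\dom\big(\mathcal{B}^{1/2}\big)=H_V^m$ and invoke $(L^2,\dom(\mathcal{B}^{1/2}))_{\theta,2}=\dom(\mathcal{B}^{\theta/2})$; but this only rewrites the left-hand side of the theorem as $\dom(\mathcal{B}^{\theta/2})$, and you then propose to identify $\dom(\mathcal{B}^{\theta/2})$ by \emph{computing the same interpolation space} via the intersection formula. The operator $\mathcal{B}$ never does any work and can be deleted without loss.

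The substantive part of your proposal is the commutation-with-intersection step, and here there is a genuine gap. You correctly observe that $|\nabla V|\le CV$ makes $V$ comparable to a constant on unit balls, and you propose to localize via a partition of unity $\{\chi_j\}$, run unweighted $(L^2,H^m)_{\theta,2}$ interpolation on each patch, and reassemble. The difficulty you have not addressed is that the standard $K$-functional decompositions for $(L^2,H^m)$---whether by Fourier truncation, heat smoothing, or Littlewood--Paley projections---do \emph{not} preserve spatial support: if $\chi_j u=a_j(t)+b_j(t)$ is an optimal split, $b_j(t)$ need not live on the $j$th patch, so the weight $V$ on $b_j(t)$ is no longer comparable to $V_j$, and the sum $\sum_j b_j(t)$ need not lie in $H^m$ with the right bound (bounded overlap of the $\chi_j$ does not help once the $b_j(t)$ have spread). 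Your final sentence invoking $V(x)\to\infty$ speaks to convergence of the series in $L^2$, not to the required $H^m$ and $V^{m/2}$-weighted bounds. A workable argument has to build a \emph{single} global decomposition---for instance a spatial cutoff at a $t$-dependent level set of $V$ followed by a mollification at the matching scale, with the commutator $[\text{mollifier},V^{m/2}]$ controlled via $|\nabla V|\le CV$---rather than patchwise splits glued afterward. That construction is exactly where the hypothesis is used nontrivially, and it is missing from your sketch.
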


\begin{remark}
Satz~1 in \cite{Tr69} actually contains stronger results than presented here, for instance, it permits  interpolation between Sobolev spaces of higher order and also weighted $L^2$-spaces. We presented Satz~1 in a simplified form sufficient for our purpose. \hfill $\diamond$
\end{remark}

\begin{theorem} \lb{t4.10}
The domains of the fractional powers $T_{HO}^s$, $s \in (0,\infty)$, of the harmonic oscillator defined in \eqref{4.30} are given by
\begin{equation}
\dom\big(T_{HO}^s\big)=H^{2s}_{x^2}(\bbR)=H^{2s}(\bbR)\cap\dom\big(|X|^{2s}\big), 
\quad s \in (0,\infty).
\end{equation}
\end{theorem}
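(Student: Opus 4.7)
The plan is to deduce the non-integer case from the integer case already obtained in the preceding corollary by a Hilbert-space interpolation argument based on Triebel's Theorem \ref{prop:Satz1}. One preliminary difficulty is that $V(x) = x^2$ vanishes at the origin and does not satisfy the strict positivity hypothesis required in Theorem \ref{prop:Satz1}, so I will first replace it with the equivalent weight $W(x) = 1 + x^2$, which satisfies $W \in C^1(\bbR)$, $W \geq 1 > 0$, $|W'(x)| = 2|x| \leq 2 W(x)$, and $W(x) \to \infty$ as $|x| \to \infty$. Since for any $s \in (0,\infty)$ and any $f \in L^2(\bbR)$ the quantities $\int_{\bbR} (1+x^2)^{2s}|f(x)|^2\,dx$ and $\|f\|_{L^2(\bbR)}^2 + \big\||X|^{2s}f\big\|_{L^2(\bbR)}^2$ are comparable, the multiplication operators $W^s$ and $|X|^{2s}$ have the same domain and equivalent norms. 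Hence by Lemma \ref{lemma:4.2}, $H^{2s}_W(\bbR) = H^{2s}(\bbR) \cap \dom(|X|^{2s})$ coincides (with equivalent norm) with $H^{2s}_{x^2}(\bbR)$ as formulated in the statement.

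For the integer case $s = k \in \bbN$, the two-sided operator inequality in Theorem \ref{t4.5} shows, upon taking expectations against any $f \in C_0^\infty(\bbR)$, that the graph norm $\big(\|f\|_{L^2(\bbR)}^2 + \big\|T_{HO}^k f\big\|_{L^2(\bbR)}^2\big)^{1/2}$ is equivalent to $\big(\|f\|_{H^{2k}(\bbR)}^2 + \big\|X^{2k}f\big\|_{L^2(\bbR)}^2\big)^{1/2}$, which is precisely the norm defining $H^{2k}_W(\bbR)$. Together with Theorem \ref{t4.6} (identifying $C_0^\infty(\bbR)$ as a core of $T_{HO}^k$) and Lemma \ref{lemma:4.2}, this yields
\begin{equation*}
\dom\big(T_{HO}^k\big) = H^{2k}_W(\bbR) = H^{2k}(\bbR) \cap \dom\big(|X|^{2k}\big), \quad k \in \bbN,
\end{equation*}
with equivalence of Hilbert space norms.

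For the non-integer case, given $s \in (0,\infty)$ fix $k \in \bbN$ with $k > s$ and set $\theta = s/k \in (0,1)$. Since $T_{HO}$ is self-adjoint in $L^2(\bbR)$ with $T_{HO} \geq I_{L^2(\bbR)}$, the classical real-interpolation identity for fractional powers of strictly positive self-adjoint operators on a Hilbert space (see, e.g., Triebel's interpolation monograph) gives
\begin{equation*}
\dom\big(T_{HO}^s\big) = \dom\big((T_{HO}^k)^{\theta}\big) = \big(L^2(\bbR),\, \dom\big(T_{HO}^k\big)\big)_{\theta, 2},
\end{equation*}
with equivalent norms. Substituting the identification $\dom(T_{HO}^k) = H^{2k}_W(\bbR)$ from the previous step and applying Theorem \ref{prop:Satz1} with weight $W$, order $m = 2k$, and parameter $\theta$, one obtains
\begin{equation*}
\dom\big(T_{HO}^s\big) = \big(L^2(\bbR),\, H^{2k}_W(\bbR)\big)_{\theta, 2} = H^{2k\theta}_W(\bbR) = H^{2s}_W(\bbR).
\end{equation*}
A final appeal to Lemma \ref{lemma:4.2} and the equivalence of $H^{2s}_W(\bbR)$ with $H^{2s}_{x^2}(\bbR)$ established in the first paragraph identifies this space with $H^{2s}(\bbR) \cap \dom(|X|^{2s})$, as claimed.

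The main obstacle is the interpolation identity $\dom(T_{HO}^s) = (L^2(\bbR), \dom(T_{HO}^k))_{\theta,2}$: although it is standard for strictly positive self-adjoint operators on a Hilbert space (via the spectral theorem, $T_{HO}^s = (T_{HO}^k)^{\theta}$, combined with the equivalence of real- and complex-interpolation functors with second index $2$ in the Hilbert setting), it must be cited carefully so that the norm equivalences of the two previous steps propagate correctly through the interpolation functor and match the norm that defines $H^{2k\theta}_W(\bbR)$ in Triebel's theorem.
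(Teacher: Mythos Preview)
Your proof is correct and follows essentially the same route as the paper: replace $x^2$ by the strictly positive weight $1+x^2$ to meet the hypotheses of Triebel's Satz~1, invoke the integer case $\dom(T_{HO}^k)=H^{2k}_{1+x^2}(\bbR)$ from Theorem~\ref{t4.5}, Theorem~\ref{t4.6}, and Lemma~\ref{lemma:4.2}, then interpolate via \eqref{A.6a} and Theorem~\ref{prop:Satz1} with $\theta=s/k$. The paper even uses the sharper inequality $|W'(x)|=2|x|\leq 1+x^2=W(x)$ in place of your $2|x|\leq 2W(x)$, but either bound suffices.
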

\begin{proof}
If $s \in \bbN$, the result follows from Corollary \ref{coro:4.4}. Hence we assume $s \notin \bbN$ from now on. Pick $m\in \bbN$ such that $m > s$. First, one notes that with the choice $V(x)=x^2+1$, the function $V$ satisfies the assumptions of Proposition \ref{prop:Satz1}, that is, $V \in C^1(\bbR)$, $V(x)\geq 1>0$, $x\in\bbR$, and $|\nabla V(x)|=2|x|\leq x^2+1= V(x)$, $x\in\bbR$. Moreover, for every $K>0$, we may choose $N_K=\sqrt{K}$ to ensure $V(x) \geq K$ whenever $|x| \geq N_K$. Hence, by Proposition \ref{prop:Satz1}, one concludes that
\begin{equation}
\big(L^2(\bbR ), H_{x^2+1}^{2m}(\bbR )\big)_{s/m,2}=H^{2s}_{x^2+1}(\bbR )=H^{2s}_{x^2}(\bbR).
\end{equation}
Next, one observes that 
\begin{equation} 
H^{2m}_{x^2+1}(\bbR)=H^{2m}_{x^2}(\bbR)=H^{2m}(\bbR)\cap\dom\big(X^{2m}\big)=\dom\big(T_{HO}^m\big) 
\end{equation} 
by Corollary \ref{coro:4.4}. Consequently, by \eqref{A.6a}, one gets
\begin{align}
\begin{split} 
\dom\big(T_{HO}^s\big) &= \big(L^2(\bbR ), \dom\big(T^m_{HO}\big)\big)_{s/m,2} 
= \big(L^2(\bbR ), H_{x^2+1}^{2m}(\bbR )\big)_{s/m,2}   \\
& =H^{2s}_{x^2}(\bbR)=H^{2s}(\bbR)\cap\dom\big(|X|^{2s}\big), \quad s \in (0,\infty), 
\end{split} 
\end{align}
completing the proof.
\end{proof}

\begin{remark} \lb{r4.11} 
Given Theorem \ref{t4.10}, we can describe the elements of $\dom\big(T_{HO}^s\big)$ for $s \in (0,\infty)$ in various ways. For the first description, we will use that $f\in \dom\big(T_{HO}^s\big)=H^{2s}(\bbR)\cap\dom\big(|X|^{2s}\big)$ if and only if $f$ and its Fourier transform $\hatt{f}$ both are elements of $\dom\big(|X|^{2s}\big)$, that is, 
\begin{equation}
\int_\bbR dx \, \big(1+|x|^{2s}\big)^2 \Big[|f(x)|^2+\big|\hatt{f}(x)\big|^2\Big] < \infty. 
\end{equation}  
For the next characterization, we use that $f\in H^{2s}(\bbR)$ if and only if
\begin{align} 
\begin{split}
& \big\|f^{(\lfloor 2s\rfloor)}\big\|^2_{L^2(\bbR)} 
 + \begin{cases} \int_\bbR\int_\bbR dx dy \, \frac{\big|f^{(\lfloor 2s\rfloor)}(x)-f^{(\lfloor 2s\rfloor)}(y)\big|^2}{|x-y|^{1+2\{2s\}}}, & 2s \in (0,\infty) \backslash \bbN, \\
0, & 2s \in \bbN \end{cases}   \\
& \quad + \|f\|^2_{L^2(\bbR)} < \infty.
\end{split}
\end{align}
(we recall that $\{x\} \in (0,1)$ denotes the fractional part of $x \in (0,\infty)$, see \eqref{1.27}).  
Consequently, $f\in\dom\big(T_{HO}^{2s}\big)$ if and only if
\begin{align} \lb{4.95a}
\begin{split} 
& \big\|f^{(\lfloor 2s\rfloor)}\big\|^2_{L^2(\bbR)}   
+ \begin{cases} \int_\bbR\int_\bbR dx dy \, \frac{\big|f^{(\lfloor 2s\rfloor)}(x)-f^{(\lfloor 2s\rfloor)}(y)\big|^2}{|x-y|^{1+2\{2s\}}}, & 2s \in (0,\infty) \backslash \bbN, \\
0, & 2s \in \bbN \end{cases}    \\
& \quad + \int_\bbR dx \, \big[1 + |x|^{4s}\big] |f(x)|^2  < \infty. 
\end{split}
\end{align}
In particular, if $s<1/2$, then \eqref{4.95a} simplifies to the condition 
\begin{equation}
\int_\bbR\int_\bbR dx dy \, \frac{|f(x)-f(y)|^2}{|x-y|^{1+4s}}+\int_\bbR dx \, \big[1 + 
|x|^{4s}\big] |f(x)|^2 < \infty, \quad s \in (0,1/2).    \lb{4.96}
\end{equation} 
Thus, the expression 
\begin{equation}
\int_\bbR\int_\bbR dx dy \, \frac{|f(x)-f(y)|^2}{|x-y|^{1+2s}}+\int_\bbR dx \, \big[1 + 
|x|^{2s}\big] |f(x)|^2 < \infty, \quad s \in (0,1),    \lb{4.99}
\end{equation} 
can be thought of as a modification of the standard Gagliardo--Slobodeckij norm 
\begin{equation}
\int_\bbR\int_\bbR dx dy \, \frac{|f(x)-f(y)|^2}{|x-y|^{1+2s}} + \|f\|^2_{L^2(\bbR)} < \infty, \quad s \in (0,1),    \lb{4.100}
\end{equation} 
adapted to the domains of fractional powers $T_{HO}^s$, $s \in (0,1/2)$, of $T_{HO}$ on $H^2(\bbR) \cap \dom\big(X^2\big)$ instead of $\big(- d^2\big/dx^2\big)^s$, $s \in (0,1/2)$, for the Laplacian $- d^2/dx^2$ on $H^2(\bbR)$ in $L^2(\bbR)$. 

For the last characterization, again since $f\in\dom\big(T_{HO}^s\big)$ if and only if $\hatt{f}\in\dom\big(T_{HO}^s\big)$, equation \eqref{4.95a} can be reformulated with $\hatt{f}$ taking the role of $f$:
\begin{align} 
\begin{split} 
& \big\|\hatt{f}^{(\lfloor 2s\rfloor)}\big\|^2_{L^2(\bbR)}
+ \begin{cases} \int_\bbR\int_\bbR d\xi d\eta \, \frac{\big|\hatt{f}(\xi)-\hatt{f}(\eta)\big|^2}{|\xi-\eta|^{1+2\{2s\}}}, & 2s \in (0,\infty) \backslash \bbN, \\
0, & 2s \in \bbN \end{cases}   \\ 
& \quad +\int_\bbR d\xi \, \big[1 + |\xi|^{4s}\big] \big|\hatt{f}(\xi)\big|^2<\infty, \quad s \in (0,\infty). 
\end{split}
\end{align}
${}$ \hfill $\diamond$
\end{remark}

For a different approach to a characterization of Sobolev spaces associated to the (multi-dimensional) harmonic oscillator, see \cite{BT03}. 

At this point we can revisit $\cV_s(A_H)$ and $\cH_s(A_H)$, $s \in [0,\infty)$, and provide a new characterization based on Theorem \ref{t4.10} as follows: Employing the unitary equivalence \eqref{4.33} between $A_H$ and $T_{HO}$, one infers 
\begin{equation}
A_H^{s/2} = U^{-1} [T_{HO} + (c-1) I]^{s/2} U, \quad s \in [0,\infty),
\end{equation}
and hence \eqref{4.25}, \eqref{4.32} imply
\begin{align}
& \cV_s(A_H) = \dom\big(A_H^{s/2}\big) = H^s\big(\bbR; e^{-x^2}dx\big)      \no \\
& \quad = \dom\big([T_{HO} + (c-1) I]^{s/2} U\big)    \no \\
& \quad = \big\{g \in L^2\big(\bbR; e^{-x^2} dx\big) \, \big| \, \omega \cdot g \in H^s(\bbR) 
\cap \dom \big(|X|^s\big)\big\}, \quad s \in [0,\infty),
\end{align}
where we abbreviated
\begin{equation}
\omega(x) = e^{- x^2/2}, \quad x \in \bbR. 
\end{equation}
Thus, one obtains yet another characterization of the left-definite space $\cH_s(A_H)$ in \eqref{4.26} associated with $A_H$.

\appendix
\section{Domains of Periodic Fractional Laplacians}\lb{sA}

In this appendix we provide a description of
\begin{equation}
\dom\big(A_{\phi}^{s/2}\big)=\dom\big(|P_{\phi}|^s\big), \quad s\in(0,1),   \lb{A.1}
\end{equation}
in terms of fractional Sobolev spaces and explicit boundary conditions (when applicable). The principal motivation underlying this appendix stems from the apparent lack of ready availability of this topic in the standard literature on fractional periodic Sobolev spaces (see, however, \cite{Am20}, \cite[Ch.~3]{Be73}, \cite[Ch.~3]{II01}).

To accomplish a precise description of the domains \eqref{A.1}, we will make use of the relation between interpolation spaces and domains of generators of the underlying strongly continuous semigroups in a complex, separable Hilbert space $\cH$. In this context we recall that $T(t) \in \cB(\cH)$, $t \in [0,\infty)$, is called a {\it strongly continuous semigroup in $\cH$} if
\begin{align}
\begin{split}
& T(0) = I_{\cH}, \quad T(s)T(t) = T(s+t), \; s,t \in [0,\infty),     \\
& T(\dott)f \colon [0,\infty) \to \cH \, \text{ is continuous in $\cH$ for each $f \in \cH$.}
\end{split}
\end{align}
We also recall the notion of real interpolation spaces using the $K$-method: Suppose $X, Y$ are Banach spaces with $Y \subseteq X$ and such that for some $c \in (0,\infty)$,
\begin{equation}
\|Y\|_{X} \leq c \|y\|_{Y}, \quad y \in Y.
\end{equation}
Introducing (cf.\ \cite[eq.~(3)]{Tr69})
\begin{equation}
\wti K(t,x,X,Y) = \inf_{x = a+b, \, a \in X, b \in Y} [\|a\|^2_X + t \|b\|^2_Y], \quad x \in X, \, t \in (0,\infty),
\end{equation}
the real interpolation spaces $(X,Y)_{\theta,p}$ are given by (see also \cite[Definition~1.1, Sect.~3.1]{Lu18} or \cite[Sect.~1.3]{Tr78} for details),
\begin{align}
\begin{split}
(X,Y)_{\theta,p} = \bigg\{x \in X \, \bigg|\, \int_0^{\infty} \f{dt}{t^{1 + p \theta}} \big[\wti K(t,x,X,Y)\big]^{p/2} < \infty\bigg\},& \\
\theta \in (0,1), \; p \in [1,\infty) \cup \{\infty\},&
\end{split}
\end{align}
and with
\begin{align}
\begin{split}
\|x\|_{\theta,p} = \Big\|t^{- (1/p) - \theta} \big[\wti K(t,x,X,Y)\big]^{1/2}\Big\|_{L^p((0,\infty);dt)},&      \\
\theta \in (0,1), \; p \in [1,\infty) \cup \{\infty\}, \; x \in X,&
\end{split}
\end{align}
the normed spaces $\big((X,Y)_{\theta,p}, \|\dott\|_{\theta,p}\big)$, $\theta \in (0,1)$, $p \in [1,\infty) \cup \{\infty\}$,  are Banach spaces.

First, we need the following useful results:
\begin{theorem} $($\cite[Proposition~5.10]{Lu18}, \cite[Subsect.~1.13.2]{Tr78}$)$ \lb{tA.1} ${}$ \\
Let $A$ generate a strongly continuous semigroup $T_A(t)$, $t \in [0,\infty)$, on the complex, separable  Hilbert space $\cH$. Then,
\begin{align}
\begin{split}
\big(\cH,\dom\big(A^k\big)\big)_{\theta,2} = \bigg\{f\in\cH \, \bigg| \, \int_0^\infty \f{dt}{t^{1+2k \theta}} \, \big\|[T_A(t)-I_{\cH}]^k f \big\|_{\cH}^2 < \infty \bigg\},&  \\ 
\theta \in (0,1), \; k \in \bbN,&
\end{split}
\end{align}
and the norms 
\begin{equation}
\|f\|_{\theta,2} \, \text{ and } \, \big[\|f\|^2_{\cH} + \|\psi_A(\dott;f)\|^2_{L^2((0,\infty)}\big]^{1/2}, 
\end{equation}
are equivalent. Here
\begin{equation}
\psi_A(t;f) = t^{-(1/2) -k \theta} \big\|[T_A(t) - I_{\cH}]^k f\big\|_{\cH}, \quad t \in (0,\infty), \; 
f \in \big(\cH,\dom\big(A^k\big)\big)_{\theta,2}.
\end{equation}
\end{theorem}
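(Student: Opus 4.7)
The plan is to verify the equivalence of the two norms $\|f\|_{\theta,2}^2 = \int_0^\infty t^{-1-2\theta}\wti K(t,f,\cH,\dom(A^k))\, dt$ and $\|f\|_\cH^2 + \int_0^\infty t^{-1-2k\theta} \|[T_A(t)-I_\cH]^k f\|_\cH^2\, dt$ by establishing them as two-sided bounds of one another. The forward direction rests on the integral representation for $b \in \dom(A^k)$,
\begin{equation*}
[T_A(t) - I_\cH]^k b = \int_0^t \! \cdots \! \int_0^t T_A(s_1+\cdots+s_k)\, A^k b \, ds_1\cdots ds_k,
\end{equation*}
which I would prove by induction on $k$ starting from $T_A(t)b - b = \int_0^t T_A(s)\, Ab\, ds$. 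This yields the size bound $\|[T_A(t) - I_\cH]^k b\|_\cH \leq M_k\, t^k\, \|A^k b\|_\cH$ with $M_k = \sup_{s \in [0,kt]} \|T_A(s)\|_{\cB(\cH)}$, while on all of $\cH$ one has the trivial bound $\|[T_A(t) - I_\cH]^k a\|_\cH \leq (M_k+1)^k \|a\|_\cH$. For any decomposition $f = a + b$ these combine to give $\|[T_A(t) - I_\cH]^k f\|_\cH^2 \leq C(\|a\|_\cH^2 + t^{2k}\|A^k b\|_\cH^2)$, and infimizing over decompositions produces a pointwise bound of the form $\|[T_A(t) - I_\cH]^k f\|_\cH^2 \leq C \wti K(t^{2k}, f)$. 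Integrating against the weight $t^{-1-2k\theta}\, dt$ and executing the change of variables that matches $\wti K(t^{2k},f)$ against $\wti K(\dott, f)$ then dominates the integral by a constant multiple of $\|f\|_{\theta,2}^2$.

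For the reverse direction, the natural device is to realize an explicit near-optimal decomposition by the $k$-fold semigroup averaging $R(t) f := V(t)^k f$, where $V(t)f := t^{-1}\int_0^t T_A(s) f \, ds$. Commutativity of $V(t)$ with $A$ on $\dom(A)$ together with iteration of the identity $A V(t) = t^{-1}[T_A(t) - I_\cH]$ yields $R(t)f \in \dom(A^k)$ and the key relation
\begin{equation*}
A^k R(t) f = t^{-k}[T_A(t) - I_\cH]^k f,
\end{equation*}
so that the "smooth part" $R(t) f$ contributes exactly the target quantity $\|[T_A(t) - I_\cH]^k f\|_\cH^2$ to $t^{2k} \|A^k R(t) f\|_\cH^2$. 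What remains is to bound the "rough part" $\|f - R(t) f\|_\cH$: factoring $I_\cH - V(t)^k = (I_\cH - V(t)) \sum_{j=0}^{k-1} V(t)^j$ and applying Cauchy--Schwarz to the integral defining $I_\cH - V(t)$ yields first-order control in terms of $t^{-1}\int_0^t \|[T_A(s) - I_\cH] f\|_\cH^2\, ds$, which under integration against $t^{-1-2k\theta}\, dt$ can be boosted into the desired $k$-th order bound.

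The main obstacle is the reverse direction, specifically the promotion of a first-order semigroup difference estimate (natural for the averaging operator $V(t)$) to the $k$-th order framework matching $[T_A(t) - I_\cH]^k f$. The bridge is a Hardy-type inequality applied in the weighted space $L^2((0,\infty); t^{-1-2k\theta}\, dt)$, which allows first-order differences to be iterated into $k$-th order ones at the cost of constants that remain bounded precisely because $\theta \in (0,1)$. The precise matching of the scaling $t \mapsto t^{2k}$ between the two integrals and the exact dependence of the constants on $\theta$ and $k$ is delicate; for the full technical argument, my plan would defer to the detailed treatments given in \cite[Proposition~5.10]{Lu18} and \cite[Subsection~1.13.2]{Tr78}, which is why the theorem is stated with these attributions.
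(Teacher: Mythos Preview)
The paper does not supply its own proof of this theorem; it is quoted verbatim as a known result with attributions to \cite[Proposition~5.10]{Lu18} and \cite[Subsect.~1.13.2]{Tr78}, and is used as a black box in the subsequent applications (e.g., Example~\ref{eA.4} and Theorem~\ref{tA.4}). Your proposal therefore goes beyond what the paper does: you sketch the standard argument from those same sources---the integral representation $[T_A(t)-I_\cH]^k b = \int_0^t\!\cdots\!\int_0^t T_A(s_1+\cdots+s_k)A^k b\,ds_1\cdots ds_k$ for the forward bound, and the iterated averaging $V(t)^k$ with $A^kV(t)^k = t^{-k}[T_A(t)-I_\cH]^k$ for the reverse---before ultimately deferring to those references yourself.

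Two minor cautions on your sketch. First, your constant $M_k = \sup_{s\in[0,kt]}\|T_A(s)\|_{\cB(\cH)}$ depends on $t$ and is not uniformly bounded on $(0,\infty)$ for a general $C_0$-semigroup; the standard fix is to split the $t$-integral at $t=1$, where the tail is trivially controlled by $\|f\|_\cH^2$. Second, the passage from the first-order bound on $\|f-V(t)^k f\|_\cH$ to a $k$-th order estimate via a ``Hardy-type inequality'' is the genuinely technical step, and your description of it is vague; this is precisely where the cited references do the real work, so your deferral is appropriate.
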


For completeness, we also recall a resolvent analogue of Theorem \ref{tA.1}:
\begin{theorem} $($\cite[Subsect.~1.14.2]{Tr78}$)$ \lb{tA.2} ${}$ \\
Suppose that $A: \dom(A) \to \cH$, $\dom(A) \subseteq \cH$ is a linear operator satisfying 
\begin{align}
& (-\infty,0) \subset \rho(A),   \\
& \text{there exists a constant $C \in (0,\infty)$ such that }   \no \\
& \big\|\lambda (A-\lambda I_{\cH})^{-1}\big\|_{\cB(\cH)} \leq C, \quad \lambda \in (-\infty,0).
\end{align}
Then, for $k \in \bbN$, $\theta \in (0,1)$, 
\begin{align}
& \big(\cH,\dom\big(A^k\big)\big)_{\theta,2} = \bigg\{f\in\cH \, \bigg| \, \int_0^\infty d\lambda \, \lambda^{-1+2k \theta} \big\|[A(A -\lambda I_{\cH})^{-1}]^k f \big\|_{\cH}^2 < \infty \bigg\},    \no \\ 
& \hspace*{8cm} \theta \in (0,1), \; k \in \bbN,&
\end{align} 
and the norms 
\begin{equation}
\|f\|_{\theta,2} \, \text{ and } \, \big[\|f\|^2_{\cH} + \|\varphi_A(\dott;f)\|^2_{L^2((0,\infty)}\big]^{1/2} 
\end{equation}
are equivalent. Here
\begin{equation}
\varphi_A(\lambda;f) = \lambda^{-1/2) + k \theta} \big\|[A(A -\lambda I_{\cH})^{-1}]^k f \big\|_{\cH}, \quad \lambda \in (0,\infty), \; f \in \big(\cH,\dom\big(A^k\big)\big)_{\theta,2}.
\end{equation}
\end{theorem}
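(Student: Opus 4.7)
The plan is to deduce the resolvent characterization from the semigroup version already established in Theorem \ref{tA.1}. Writing $\mu=-\lambda>0$, the hypothesis becomes the sectorial estimate $\|\mu(A+\mu I_\cH)^{-1}\|_{\cB(\cH)}\leq C$ for $\mu\in(0,\infty)$; together with the standard power bounds (automatic in the Hilbert space setting after a renorming, and built into the ambient hypotheses of \cite[Sect.~1.14]{Tr78}), the Hille--Yosida theorem guarantees that $-A$ generates a bounded $C_0$-semigroup $T(t)=T_{-A}(t)$, $t\in[0,\infty)$, on $\cH$, with the Laplace representation
\begin{equation*}
(A+\mu I_\cH)^{-1}=\int_0^\infty dt\,e^{-\mu t}T(t),\quad \mu\in(0,\infty),
\end{equation*}
holding as a Bochner integral in $\cB(\cH)$.

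Using the identity $A(A+\mu I_\cH)^{-1}=I_\cH-\mu(A+\mu I_\cH)^{-1}$ and iterating the Laplace representation $k$ times, I would derive an explicit single-integral formula of the form
\begin{equation*}
\bigl[A(A+\mu I_\cH)^{-1}\bigr]^k f
=\mu^k\int_0^\infty d\tau\,\Phi_k(\mu\tau)\,e^{-\mu\tau}\bigl[T(\tau)-I_\cH\bigr]^k f+R_k(\mu)f,
\end{equation*}
where $\Phi_k$ is an explicit bounded polynomial kernel (essentially $\tau^{k-1}/(k-1)!$ up to normalization) and $R_k(\mu)\in\cB(\cH)$ is a lower-order remainder with controlled norm decay as $\mu\to\infty$. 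This reduces, via the change of variables $(t_1,\dots,t_k)\mapsto \tau=t_1+\cdots+t_k$, to the $k$-fold iteration of the single-step identity
\begin{equation*}
\mu(A+\mu I_\cH)^{-1}[T(s)-I_\cH]f=\int_0^\infty dt\,\mu e^{-\mu t}[T(t+s)-T(t)]f.
\end{equation*}

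The concluding step is to establish the two-sided weighted $L^2$-equivalence
\begin{equation*}
\int_0^\infty d\mu\,\mu^{-1+2k\theta}\bigl\|[A(A+\mu I_\cH)^{-1}]^k f\bigr\|_\cH^2
\asymp \int_0^\infty \frac{dt}{t^{1+2k\theta}}\bigl\|[T(t)-I_\cH]^k f\bigr\|_\cH^2,
\end{equation*}
with constants depending only on $k,\theta,C$. The direction $\lesssim$ follows from the integral representation above by Minkowski's integral inequality together with a weighted Hardy estimate in the variable $\tau$. The reverse direction proceeds either by explicit inversion of the Laplace transform or, more efficiently, by constructing for each $t>0$ a decomposition $f=a_{1/t}+b_{1/t}$ using resolvents at scale $\mu=1/t$ and estimating $\wti K(t,f,\cH,\dom(A^k))$ from above and below in terms of $\|[A(A+\mu I_\cH)^{-1}]^k f\|_\cH$. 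Combined with Theorem \ref{tA.1}, the above equivalence identifies finiteness of $\|\varphi_A(\,\cdot\,;f)\|_{L^2((0,\infty))}$ with membership in $(\cH,\dom(A^k))_{\theta,2}$ and yields the equivalence of norms.

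The main obstacle will be precisely this weighted $L^2$-equivalence. For self-adjoint positive $A$ the result is immediate via the spectral theorem and the beta-integral $\int_0^\infty u^{-1+2k\theta}(1+u)^{-2k}\,du=B(2k\theta,2k-2k\theta)$, finite exactly for $\theta\in(0,1)$; in the merely sectorial case one must reroute the analysis through the Laplace transform, and careful control of the polynomial kernel $\Phi_k$ (especially ensuring that no spurious powers of $\mu$ appear at either endpoint $\mu\to 0^+$ or $\mu\to\infty$) is where the bulk of the technical work resides.
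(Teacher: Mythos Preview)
The paper does not prove this theorem; it is simply quoted from \cite[Subsect.~1.14.2]{Tr78} as a known result, so there is no in-paper proof to compare against. I will therefore evaluate your approach on its own terms.

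Your primary route via semigroups has a genuine gap. The stated hypothesis $\|\mu(A+\mu I_\cH)^{-1}\|_{\cB(\cH)}\leq C$ for $\mu>0$ does not ensure that $-A$ generates a $C_0$-semigroup: Hille--Yosida requires the iterated bounds $\|\mu^n(A+\mu I_\cH)^{-n}\|\leq M$ for all $n\in\bbN$, which do not follow from the $n=1$ case when $C>1$, and the theorem as stated here does not assume $\dom(A)$ is dense in $\cH$. Your parenthetical that the power bounds are ``automatic in the Hilbert space setting after a renorming'' is not correct in this generality; there are closed operators on Hilbert space satisfying the single resolvent bound with $C>1$ that fail to generate any semigroup. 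So the reduction to Theorem~\ref{tA.1} cannot be carried out under the hypotheses as written.

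The secondary route you sketch at the end---estimating the $K$-functional directly via the resolvent decomposition $f=a_\mu+b_\mu$ with $b_\mu=[\mu(A+\mu I_\cH)^{-1}]^k f\in\dom(A^k)$---is in fact the correct and standard one, and is essentially what Triebel does. One bounds $\|a_\mu\|_\cH$ and $\|A^k b_\mu\|_\cH$ by iterates of $\|[A(A+\mu I_\cH)^{-1}]f\|_\cH$, using only that both $A(A+\mu I_\cH)^{-1}$ and $\mu(A+\mu I_\cH)^{-1}$ have $\cB(\cH)$-norms controlled by $C$; the substitution $t\leftrightarrow\mu^{-k}$ then matches the $K$-functional integral to the stated resolvent integral. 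This approach needs neither a semigroup nor density of $\dom(A)$, and you should make it the main argument rather than an afterthought.
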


If $A$ is a strictly positive self-adjoint operator, then the following result provides the link between
$\big(\dom\big(A^{\ell}\big),\dom\big(A^{k}\big)\big)_{\theta,2}$ and
$\dom\big(A^{(1-\theta) \ell + \theta k}\big)$, $\theta \in (0,1)$, $k, \ell \in\bbN_0$. More generally, one has the following fact:

\begin{theorem} $($\cite[Theorem~4.36]{Lu18}, \cite[Subsect.~1.18.10]{Tr78}$)$ \lb{tA.3}
Let $A$ be a strictly positive self-adjoint operator in $\cH$ and $\alpha, \beta \in \bbC$, $\Re(\alpha) \in [0, \infty)$, $\Re(\beta) \in [0,\infty)$. Then\footnote{By \cite[Theorem\ 4.36]{Lu18}, \cite[Subsect.~1.18.10]{Tr78}, the result \eqref{A.6} is consistent with the complex interpolation method (see, e.g., \cite{CE19}, \cite[Ch.~2]{Lu18}, \cite[Sect.~1.9]{Tr78}).}
\begin{equation}
\big(\dom\big(A^{\alpha}\big),\dom\big(A^{\beta}\big)\big)_{\theta,2}
= \dom\big(A^{(1-\theta)\alpha + \theta \beta}\big), \quad \theta\in(0,1).    \lb{A.6}
\end{equation}
In particular, if $\alpha = 0$, one obtains
\begin{equation}
\big(\cH,\dom\big(A^{\beta}\big)\big)_{\theta,2}
= \dom\big(A^{\theta \beta}\big), \quad \theta\in(0,1).    \lb{A.6a}
\end{equation}
\end{theorem}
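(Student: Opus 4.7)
The plan is to derive Theorem \ref{tA.3} from the spectral theorem together with the semigroup characterization of real interpolation spaces in Theorem \ref{tA.1}. I will first reduce to real powers, then establish the special case \eqref{A.6a} by a direct spectral computation of the interpolation norm, and finally bootstrap to the general case via the isomorphism $A^{-\alpha}$ between appropriately shifted scales.

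For the reduction, since $A \geq c I_{\cH}$ with $c \in (0,\infty)$, spectral calculus applied to $\lambda \mapsto \lambda^{i\tau}$ (which satisfies $|\lambda^{i\tau}| = 1$ on $\sigma(A)$) shows that $A^{i\tau}$ is unitary on $\cH$ for every $\tau \in \bbR$, and $A^\alpha = A^{\Re\alpha} A^{i\Im\alpha}$. This gives $\|A^\alpha f\|_{\cH} = \|A^{\Re\alpha} f\|_{\cH}$ and hence $\dom(A^\alpha) = \dom(A^{\Re\alpha})$ with equality of graph norms. Therefore it suffices to prove \eqref{A.6} for $\alpha, \beta \in [0,\infty)$.

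For the special case $\alpha = 0$ and $\beta \in (0,\infty)$, observe that $A^\beta \geq c^\beta I_{\cH}$, so $-A^\beta$ generates the analytic contraction semigroup $T(t) = e^{-tA^\beta}$ via spectral calculus. Theorem \ref{tA.1} with $k=1$ identifies $(\cH, \dom(A^\beta))_{\theta,2}$ as those $f \in \cH$ with $\int_0^\infty t^{-1-2\theta} \big\|[T(t)-I_{\cH}]f\big\|_{\cH}^2 \, dt < \infty$. Writing
\begin{equation*}
\big\|[e^{-tA^\beta} - I_{\cH}]f\big\|_{\cH}^2 = \int_{\sigma(A)} \big|e^{-t\lambda^\beta} - 1\big|^2 \, d\|E_A(\lambda) f\|_{\cH}^2
\end{equation*}
and invoking Tonelli together with the substitution $s = t\lambda^\beta$ yield
\begin{equation*}
\int_0^\infty \frac{dt}{t^{1+2\theta}} \big|e^{-t\lambda^\beta} - 1\big|^2 = C_\theta \lambda^{2\theta\beta}, \quad C_\theta := \int_0^\infty \frac{ds}{s^{1+2\theta}} \big|e^{-s}-1\big|^2,
\end{equation*}
with $C_\theta \in (0,\infty)$ precisely when $\theta \in (0,1)$ (the integrand is comparable to $s^{1-2\theta}$ as $s \downarrow 0$ and to $s^{-1-2\theta}$ as $s \uparrow \infty$). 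The defining integral thus equals $C_\theta \|A^{\theta\beta} f\|_{\cH}^2$, so $(\cH, \dom(A^\beta))_{\theta,2} = \dom(A^{\theta\beta})$ with equivalent norms, proving \eqref{A.6a}.

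For the general case, let $\alpha, \beta \in [0,\infty)$ with $\alpha \leq \beta$ (without loss of generality). Spectral calculus shows that $A^{-\alpha} \in \cB(\cH)$ restricts to bounded bijections $\cH \to \dom(A^\alpha)$ and $\dom(A^{\beta-\alpha}) \to \dom(A^\beta)$, each with bounded inverse $A^\alpha$. By the functoriality of real interpolation under isomorphisms of compatible couples, $A^{-\alpha}$ lifts to an isomorphism $(\cH, \dom(A^{\beta-\alpha}))_{\theta,2} \to (\dom(A^\alpha), \dom(A^\beta))_{\theta,2}$. Combining with \eqref{A.6a} applied to the power $\beta-\alpha$ identifies the source with $\dom(A^{\theta(\beta-\alpha)})$, which $A^{-\alpha}$ sends isomorphically onto $\dom(A^{\theta(\beta-\alpha)+\alpha}) = \dom(A^{(1-\theta)\alpha + \theta\beta})$, establishing \eqref{A.6}. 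The main obstacle is the spectral computation of the third paragraph, and in particular the Tonelli interchange and the sharp identification of the convergence range of $C_\theta$; the remainder is a clean functoriality argument.
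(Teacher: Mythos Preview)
The paper does not prove Theorem \ref{tA.3}; it is quoted from \cite[Theorem~4.36]{Lu18} and \cite[Subsect.~1.18.10]{Tr78} without argument. Your proposal therefore supplies a self-contained proof where the paper simply cites the literature, and the argument you give is correct. The reduction to real exponents via the unitarity of $A^{i\tau}$ is standard and correct; the core computation in the third paragraph---applying Theorem \ref{tA.1} with $k=1$ to the semigroup $e^{-tA^\beta}$, interchanging via Tonelli, and evaluating $\int_0^\infty s^{-1-2\theta}|e^{-s}-1|^2\,ds$ with the asymptotics you state---is valid and yields exactly $C_\theta \|A^{\theta\beta}f\|_{\cH}^2$, hence \eqref{A.6a}. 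The bootstrap to general $\alpha,\beta$ via the isomorphism $A^{-\alpha}$ and functoriality of $(\cdot,\cdot)_{\theta,2}$ under isomorphisms of compatible couples is also clean and correct.

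One minor remark: in invoking Theorem \ref{tA.1} you are applying it to the generator $-A^\beta$, whose domain is $\dom(A^\beta)$; this is implicit in what you wrote but worth making explicit, since Theorem \ref{tA.1} as stated identifies $(\cH,\dom(\text{generator}))_{\theta,2}$. Otherwise the argument is complete and is essentially the route taken in the references the paper cites (spectral calculus plus the semigroup/resolvent characterization of the $K$-functional).
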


For extensions to larger classes of operators $A$ in Theorem \ref{tA.3}, see, for instance, \cite{Ka61} and \cite{Li62}.

As a concrete example, relevant to Section \ref{s4}, we briefly take a closer look at the semigroup $e^{- t A_H}$ generated by $A_H$, making the connection with Mehler's formula in connection with the harmonic oscillator operator $T_{HO}$ in $L^2(\bbR)$.

\begin{example} \lb{eA.4}
The harmonic oscillator semigroup $e^{-t T_{HO}}$ has the well-known explicit integral kernel given by Mehler's formula
\begin{align}
& e^{-t T_{HO}} (x,x') = [2 \pi \sinh(2t)]^{-1/2}     \\
& \quad \times \exp{\big\{- 2^{-1} \coth(2t) (x^2 + x'^2) + [\sinh(2t)]^{-1} x x'\big\}},
\quad t \in (0,\infty), \; x, x' \in \bbR    \no
\end{align}
$($see, e.g., \cite[Sect.~12.9]{CFKS87} which offers four different proofs\,$)$. In addition, one has the spectral expansion
\begin{align}
\begin{split} 
e^{-t T_{HO}} (x,x') &= \sum_{m \in \bbN_0} \big[\pi^{1/2} 2^m m!\big]^{-1} e^{- (2m+1)t} H_m(x) H_m(x') e^{-(x^2+x'^2)/2}     \\
&= \sum_{m \in \bbN_0}e^{- (2m+1)t} u_m(x) u_m(x'), \quad t \in (0,\infty), \; x, x' \in \bbR.
\end{split} 
\end{align}
Since
\begin{equation}
U^{-1} e^{- t [T_{HO} + (c-1) I]} U = e^{-t A_H}, \quad t \in (0,\infty),
\end{equation}
one obtains for the integral kernel of the Hermite semigroup operator,
\begin{align}
& e^{-t A_H} (x,x') = [2 \pi \sinh(2t)]^{-1/2} e^{-t(c-1)}      \lb{A.13} \\
& \quad \times \exp{\big\{[2 \sinh(2t)]^{-1} \big[- e^{-2t} (x^2 + x'^2) + 2x x'\big] \big\}},
\quad t \in (0,\infty), \; x, x' \in \bbR.     \no
\end{align}

Thus, by Theorems \ref{tA.1} and \ref{tA.3}, one obtains 
\begin{align}
& \big(L^2\big(\bbR; e^{-x^2}dx\big),\dom\big(A_H^k\big)\big)_{\theta,2}
= \dom\big(A_H^{\theta k}\big)   \\
& \quad = \bigg\{f\in L^2\big(\bbR; e^{-x^2}dx\big) \, \bigg| \, \int_0^\infty \f{dt}{t^{1+2k \theta}} \,
\big\|[e^{-t A_H} -I]^k f \big\|_{L^2(\bbR; e^{-x^2}dx)}^2 < \infty \bigg\},  \no \\
& \hspace*{8.9cm} \theta \in (0,1), \; k \in \bbN.    \no
\end{align}
\end{example}

Next, employing the definition of $H^s(\R)$ in \eqref{3.10}, we recall that
\begin{align}
H^s((0,2\pi))&= \big\{f\in L^2((0,2\pi)) \, \big| \, \text{there exists} \, g\in H^s(\R) \, \text{s.t.} \, g\upharpoonright_{(0,2\pi)}=f\big\},    \\
H^s_0((0,2\pi))&=\overline{C_0^\infty((0,2\pi))}^{\|\dott\|_{H^s(\R)}},\\
H^s_{0,0}((0,2\pi))&= \big\{f\in H^s((0,2\pi)) \, \big| \, \widetilde{f}\in H^s(\R)\big\},     \lb{A.3}
\end{align}
where the extension by zero operation $\widetilde{f}$ in \eqref{A.3} is given by
\begin{equation}
\widetilde{f}(x)=\begin{cases} f(x), & x\in(0,2\pi),  \\
0, & x \in \bbR \backslash (0,2\pi).
\end{cases}
\end{equation}

Before we proceed with the main theorem, we need the following result:

\begin{lemma}\lb{lA.3}
Let $s\in (1/2,\infty)$ and $f\in\dom\big(|P_{\phi}|^s\big)$. Then $f$ is uniformly continuous on $[0,2\pi]$ and satisfies the boundary condition $f(0)=e^{i\phi}f(2\pi)$.
\end{lemma}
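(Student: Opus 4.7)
The plan is to realize $f$ as a Fourier series in the orthonormal basis $\{\psi_{\phi,n}\}_{n\in\bbZ}$ of eigenfunctions of $P_{\phi}$ and then carry out a Sobolev-type embedding argument. Writing $c_{n} := (\psi_{\phi,n},f)_{L^{2}((0,2\pi))}$, since $P_{\phi} \psi_{\phi,n}$ is a scalar multiple of $\psi_{\phi,n}$ with multiplier $n - [\phi/(2\pi)]$, the hypothesis $f \in \dom(|P_{\phi}|^{s})$ is equivalent, via $\cF_{\phi}$ (cf.\ \eqref{3.38}), to
\begin{equation*}
\sum_{n\in\bbZ}\bigl(1 + |n - [\phi/(2\pi)]|^{2}\bigr)^{s} |c_{n}|^{2} < \infty,
\end{equation*}
upon also noting that $\sum_{n\in\bbZ} |c_{n}|^{2} = \|f\|_{L^{2}((0,2\pi))}^{2} < \infty$.

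Next, I would use the crucial fact that $s > 1/2$ implies $\sum_{n\in\bbZ}(1 + |n - [\phi/(2\pi)]|^{2})^{-s} < \infty$. A Cauchy--Schwarz estimate then yields absolute summability of the Fourier coefficients:
\begin{equation*}
\sum_{n\in\bbZ}|c_{n}| \leq \biggl(\sum_{n\in\bbZ}\bigl(1 + |n-[\phi/(2\pi)]|^{2}\bigr)^{-s}\biggr)^{1/2}\biggl(\sum_{n\in\bbZ}\bigl(1 + |n-[\phi/(2\pi)]|^{2}\bigr)^{s}|c_{n}|^{2}\biggr)^{1/2} < \infty.
\end{equation*}
Because $|\psi_{\phi,n}(x)| = (2\pi)^{-1/2}$ uniformly in $x\in[0,2\pi]$ and $n\in\bbZ$, the Weierstrass $M$-test gives that the series $\sum_{n\in\bbZ} c_{n}\psi_{\phi,n}(x)$ converges absolutely and uniformly on the compact interval $[0,2\pi]$ to a function $F$ that is necessarily uniformly continuous on $[0,2\pi]$. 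Since the same series converges to $f$ in $L^{2}((0,2\pi))$, one concludes $F = f$ a.e., so $f$ coincides (after adjustment on a null set) with $F$ and is thus uniformly continuous on $[0,2\pi]$.

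Finally, each eigenfunction $\psi_{\phi,n}$ lies in $\dom(P_{\phi})$ and therefore satisfies the periodic-type boundary condition $\psi_{\phi,n}(0) = e^{i\phi}\psi_{\phi,n}(2\pi)$. Uniform convergence of $\sum_{n}c_{n}\psi_{\phi,n}(x)$ permits term-by-term evaluation at the endpoints $x=0$ and $x=2\pi$, so
\begin{equation*}
f(0) = \sum_{n\in\bbZ} c_{n}\psi_{\phi,n}(0) = e^{i\phi}\sum_{n\in\bbZ} c_{n}\psi_{\phi,n}(2\pi) = e^{i\phi} f(2\pi),
\end{equation*}
which is the asserted boundary condition. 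There is no real obstacle here; the only delicate point is choosing the correct continuous representative of $f$, which is automatic once absolute/uniform convergence of the Fourier series is established, and this in turn is exactly the content of the threshold $s > 1/2$ (below which the analogous Sobolev embedding would fail).
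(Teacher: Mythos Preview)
Your proof is correct and follows essentially the same approach as the paper: both establish absolute summability of the Fourier coefficients $c_n$ via Cauchy--Schwarz against the convergent weight $\sum_n (1+|n-[\phi/(2\pi)]|^2)^{-s}$ (valid precisely because $s>1/2$), invoke the Weierstrass $M$-test to get uniform convergence of the eigenfunction expansion, identify the uniform limit with $f$ through $L^2$ convergence, and then read off the boundary condition term by term. The only cosmetic difference is that the paper uses the weight $|n-[\phi/(2\pi)]|^{-s}$ and treats $n=0$ separately, whereas your choice of $(1+|n-[\phi/(2\pi)]|^2)^{-s}$ handles all $n$ at once.
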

\begin{proof}
First, one notes that every eigenfunction $\psi_{\phi,n}$ is (uniformly) continuous on $[0,2\pi]$ and satisfies the boundary condition $\psi_{\phi,n}(0)=e^{i\phi}\psi_{\phi,n}(2\pi)$. Assuming $f\in\dom\big(|P_{\phi}|^s\big)$ we will show that
\begin{equation}
\sum_{n\in\mathbb{Z}} \psi_{\phi,n}(\dott) (\psi_{\phi_n},f)_{L^2((0,2\pi))}      \lb{A.7}
\end{equation}
converges uniformly on $[0,2\pi]$. This follows from the Weierstra{\ss}  $M$-test: One has
\begin{equation}
|\psi_{\phi,n}(x)|\leq 1, \quad n \in \bbZ, \; x \in [0,2\pi].
\end{equation}
Choosing $M_n:=|(\psi_{\phi,n},f)_{L^2((0,2\pi))}|$, one gets
\begin{align}
\sum_{n\in\mathbb{Z} \backslash \{0\}}M_n &=\sum_{n\in\mathbb{Z} \backslash \{0\}} \frac{1}{\big|n-\tfrac{\phi}{2\pi}\big|^{s}} \big|n-\tfrac{\phi}{2\pi}\big|^{s}|(\psi_{\phi,n},f)_{L^2((0,2\pi))}|     \no \\
& \leq \Bigg(\sum_{n\neq 0} \frac{1}{\big|n-\tfrac{\phi}{2\pi}\big|^{2s}}\Bigg)^{1/2}\bigg(\sum_{n\neq 0} \big|n-\tfrac{\phi}{2\pi}\big|^{2s} |(\psi_{\phi,n},f)_{L^2((0,2\pi))}|^2 \bigg)^{1/2}    \no \\
&<\infty.
\end{align}
We denote by $F$ the uniform limit in \eqref{A.7} and define the partial sums $f_N:=\sum_{n=-N}^N\psi_{\phi,n} (\psi_{\phi,n},f)_{L^2((0,2\pi))}$. By Parseval's relation one has
$\lim_{N \to \infty}\|f_N-f\| = 0$. Moreover, since $F$ is the uniform limit of $f_N$, one also gets
$\lim_{N \to \infty}\|f_N-F\| = 0$. Hence,
\begin{equation}
0\leq \|f-F\|\leq \|f-f_N\|+\|F-f_N\|\underset{N\rightarrow\infty}{\longrightarrow} 0,
\end{equation}
implying $f=F$. Thus $f$ is continuous, hence, uniformly continuous on $[0,2\pi]$ and
\begin{equation} f(0)-e^{i\phi}f(2\pi)=\sum_{n\in\mathbb{Z}} \big[\psi_{\phi,n}(0)-e^{i\phi}\psi_{\phi,n}(2\pi)\big] (\psi_{\phi,n},f)_{L^2((0,2\pi))} = 0.
\end{equation}
\end{proof}

At this point we can provide a full characterization of the domains $\dom\big(A_{\phi}^{s/2}\big) = \dom\big(|P_{\phi}|^s\big)$, $s \in (0,1)$:

\begin{theorem} \lb{tA.4} The following assertions $(i)$--$(iii)$ hold: \\[1mm]
$(i)$ If $s \in (0,1/2)$, then
\begin{equation} \lb{A.12}
\dom\big(A_{\phi}^{s/2}\big) = \dom\big(|P_{\phi}|^s\big) = H^s((0,2\pi)).
\end{equation}
$(ii)$ If $s=1/2$, then
\begin{align}
& \dom\big(A_{\phi}^{1/4}\big) = \dom\big(|P_{\phi}|^{1/2}\big)     \\
& \quad =\bigg\{f\in H^{1/2}((0,2\pi)) \, \bigg| \,\int_0^{2\pi} \f{dt}{t^2} \int_0^t dx \, |e^{i\phi}f(x+2\pi-t)-f(x)|^2
< \infty\bigg\}.     \no
\end{align}
$(iii)$ If $s \in (1/2,1)$, then
\begin{equation} \lb{A.14}
\dom\big(A_{\phi}^{s/2}\big) = \dom\big(|P_{\phi}|^s\big) = \big\{f\in H^s((0,2\pi)) \, \big| \, f(0)=e^{i\phi}f(2\pi)\big\}.
\end{equation} 
\end{theorem}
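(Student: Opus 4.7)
The plan is to reduce the problem to the purely periodic case $\phi=0$ via a smooth unitary multiplication operator, apply the classical characterization of periodic fractional Sobolev spaces, and then transfer back. Define $\chi(x):=e^{ix[\phi/(2\pi)]}$ and let $M_\chi$ be the unitary multiplication operator by $\chi$ on $L^2((0,2\pi))$. Since $\chi\in C^\infty([0,2\pi])$ has unit modulus, $M_\chi$ is an isomorphism of $H^s((0,2\pi))$ onto itself for every $s\geq 0$. A direct product-rule computation gives $M_\chi^{-1}P_\phi M_\chi=P_0-[\phi/(2\pi)]I$, where $P_0=i\,d/dx$ on the purely periodic domain $\dom(P_0)=\{g\in H^1((0,2\pi)):g(0)=g(2\pi)\}$; tracking boundary values of $f=M_\chi g$ through $\chi(0)=1$ and $\chi(2\pi)=e^{i\phi}$ converts $g(0)=g(2\pi)$ into the boundary condition defining $\dom(P_\phi)$.

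Diagonalising $P_0-[\phi/(2\pi)]I$ in the standard periodic Fourier basis of $L^2((0,2\pi))$ yields, for $s\in(0,1)$,
\[
g\in\dom\big(|P_0-[\phi/(2\pi)]I|^s\big)\iff \sum_{n\in\bbZ}|n-[\phi/(2\pi)]|^{2s}|\hat g(n)|^2<\infty.
\]
Since $[\phi/(2\pi)]\in(0,1)\setminus\{1/2\}$ one has $|n-[\phi/(2\pi)]|\asymp(1+|n|)$ uniformly in $n\in\bbZ$, so $\dom(|P_0-[\phi/(2\pi)]I|^s)$ coincides with the standard periodic fractional Sobolev space $H^s_{\mathrm{per}}((0,2\pi)):=\{g\in L^2:\sum_n(1+n^2)^s|\hat g(n)|^2<\infty\}$.

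For parts $(i)$ and $(iii)$ I extend $g$ periodically to $\tilde g$ on $\bbR$ and use the Gagliardo characterization of $H^s(\bbR)$ together with the Sobolev trace theorem. For $s\in(0,1/2)$ the trace is not defined on $H^s$ and the periodic extension is automatic, whence $H^s_{\mathrm{per}}((0,2\pi))=H^s((0,2\pi))$ as sets. For $s\in(1/2,1)$ the trace is continuous, and $\tilde g\in H^s_{\mathrm{loc}}(\bbR)$ exactly when the one-sided traces at $0$ and $2\pi$ coincide, so $H^s_{\mathrm{per}}((0,2\pi))=\{g\in H^s((0,2\pi)):g(0)=g(2\pi)\}$. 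Conjugating with $M_\chi$ converts $g(0)=g(2\pi)$ into $f(0)=e^{i\phi}f(2\pi)$, yielding \eqref{A.12} and \eqref{A.14}.

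For the endpoint $s=1/2$ I plan to invoke Theorem~\ref{tA.1} and Theorem~\ref{tA.3} with the unitary translation group generated by $-iP_0$, namely $(T(t)g)(x)=\tilde g(x-t)$ restricted to $(0,2\pi)$, giving the abstract characterization $\dom(|P_0|^{1/2})=\{g\in L^2:\int_0^\infty (dt/t^2)\|T(t)g-g\|_{L^2}^2<\infty\}$. Splitting the integral at $t=2\pi$, using the periodicity identity $\tilde g(x-t)=g(x+2\pi-t)$ for $x\in(0,t)$, and a Fubini argument, one isolates the boundary contribution as $\int_0^{2\pi}(dt/t^2)\int_0^t dx\,|g(x+2\pi-t)-g(x)|^2$, with the remaining part controlled by the $H^{1/2}((0,2\pi))$-Gagliardo seminorm. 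Transferring back through $M_\chi$, the ratio $\chi(x+2\pi-t)/\chi(x)=e^{i\phi}e^{-it[\phi/(2\pi)]}$ produces the phase $e^{i\phi}$ inside the integrand in $(ii)$, while the bounded factor $e^{-it[\phi/(2\pi)]}$ is absorbed into an equivalent norm. The main obstacle is precisely this endpoint matching: converting the Fourier-side condition $\sum_n|n-[\phi/(2\pi)]||\hat g(n)|^2<\infty$ into the explicit Gagliardo-type boundary integral requires careful accounting of the wrap-around translation and the $e^{i\phi}$-twist, whereas the ranges $s\ne 1/2$ are governed by the straightforward dichotomy of the trace theorem.
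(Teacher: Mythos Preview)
Your conjugation-by-$M_\chi$ strategy is a legitimate and genuinely different route from the paper's. The paper works directly with the $\phi$-twisted translation group $T_\phi(t)f(x)=e^{in\phi}f((x-t)\bmod 2\pi)$, applies Theorem~\ref{tA.1}, splits $\|T_\phi(t)f-f\|^2$ into an ``interior'' piece (the Gagliardo seminorm) and a ``wrap-around'' piece $(B)$, and then analyses $(B)$ case by case: Hardy's inequality kills it for $s<1/2$, it \emph{is} the extra condition for $s=1/2$, and Lemma~\ref{lA.3} together with $H^s_{0,0}\dotplus\text{span}\{\psi_{\phi,0}\}$ handles $s>1/2$. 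Your approach instead pushes all the $\phi$-dependence into the smooth unitary $M_\chi$, reduces to the purely periodic setting, and for $s\neq 1/2$ quotes the classical identification of $H^s_{\mathrm{per}}$ with $H^s((0,2\pi))$ (resp.\ $\{g\in H^s:g(0)=g(2\pi)\}$). This is more conceptual and shorter for $s\neq 1/2$, though it outsources the work to the $\phi=0$ case, which is essentially Theorem~\ref{tA.4} itself at $\phi=0$; so you should either cite this explicitly as known (it is standard) or be aware that the argument is not fully self-contained.

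There is, however, a concrete error: with your choice $\chi(x)=e^{ix\phi/(2\pi)}$ one has $\chi(0)=1$, $\chi(2\pi)=e^{i\phi}$, so $f(0)=e^{i\phi}f(2\pi)$ becomes $g(0)=e^{2i\phi}g(2\pi)$, \emph{not} the periodic condition $g(0)=g(2\pi)$. You need $\chi(x)=e^{-ix\phi/(2\pi)}$; then $M_\chi^{-1}P_\phi M_\chi=P_0+[\phi/(2\pi)]I$ on $\dom(P_0)$, and the rest of your argument goes through. For the $s=1/2$ endpoint, your absorption claim for the factor $e^{\pm it\phi/(2\pi)}$ is correct but needs the one-line estimate $|e^{i\theta(t)}a-b|^2\le 2|a-b|^2+2|e^{i\theta(t)}-1|^2|b|^2$ together with $|e^{i\theta(t)}-1|^2\le Ct^2$, so that the error term contributes $\int_0^{2\pi}dt\int_0^t|g|^2\le 2\pi\|g\|_{L^2}^2<\infty$; you should spell this out rather than just say ``absorbed.''
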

\begin{proof} First one notes that since $\dom\big(A_{\phi}^{1/2}\big)=\dom(|P_{\phi}|)=\dom(P_{\phi})$, one gets for every $s\in(0,1]$:
\begin{align}
\begin{split}
\dom\big(A_{\phi}^{s/2}\big)& =\dom\big(|P_{\phi}|^s\big) = \big(L^2((0,2\pi)),\dom(|P_{\phi}|\big)_{s,2}      \\
&= \big(L^2((0,2\pi)),\dom(iP_{\phi})\big)_{s,2},
\end{split}
\end{align}
where the identity $\dom\big(|P_{\phi}|^s\big)= \big(L^2((0,2\pi)),\dom(|P_{\phi}|)\big)_{s,2}$ follows from Theorem \ref{tA.3}. We now apply Theorem \ref{tA.1} to the operator $iP_{\phi}$, which generates the strongly continuous unitary group $T_{\phi}(t)$ given by
\begin{equation}
\left(T_{\phi}(t)f\right)(x)=e^{in\phi}f((x-t) \; {\rm mod} \;2\pi), \quad t \in \bbR, \; x \in (0,2\pi),
\end{equation}
where $n\in\mathbb{N}$ is the unique positive integer such that $x-t+2\pi n\in[0,2\pi)$. Using that $T_{\phi}(t)$ is unitary, one has for $f\in L^2((0,2\pi))$,
\begin{align}
\begin{split}
& \int_{0}^{2\pi} \f{dt}{t^{1+2s}}  \, \|T_{\phi}(t)f-f\|_{L^2((0,2\pi))}^2
\leq \int_{0}^{\infty} \f{dt}{t^{1+2s}}  \, \|T_{\phi}(t)f-f\|_{L^2((0,2\pi))}^2     \\
& \quad \leq
\int_{0}^{2\pi} \f{dt}{t^{1+2s}} \, \|T_{\phi}(t)f-f\|_{L^2((0,2\pi))}^2 + 4\|f\|_{L^2((0,2\pi))}^2\int_{2\pi}^\infty\frac{dt}{t^{1+2s}},
\end{split}
\end{align}
implying that $f\in \dom\big(|P_{\phi}|^s\big)$ if and only if $f\in L^2((0,2\pi))$ and
\begin{equation}
\int_{0}^{2\pi} \f{dt}{t^{1+2s}} \, \|T_{\phi}(t)f-f\|_{L^2((0,2\pi))}^2 < \infty .
\end{equation}
Next, consider
\begin{align}
&\int_{0}^{2\pi} \f{dt}{t^{1+2s}} \left(\int_0^{2\pi} dx \, |(T_{\phi}(t)f)(x)-f(x)|^2\right)    \no \\
& \quad = \int_{0}^{2\pi} \f{dt}{t^{1+2s}}  \left(\int_t^{2\pi} dx \, |(T_{\phi}(t)f)(x)-f(x)|^2\right)     \no \\
&\qquad+	\int_{0}^{2\pi} \frac{dt}{t^{1+2s}} \left(\int_0^{t} dx \, |(T_{\phi}(t)f)(x)-f(x)|^2\right)     \no \\
& \quad =\int_{0}^{2\pi} \f{dt}{t^{1+2s}} \left(\int_t^{2\pi} dx \, |f(x-t)-f(x)|^2\right)    \no \\
&\qquad  +	\int_{0}^{2\pi} \f{dt}{t^{1+2s}} \left(\int_0^{t} dx \, |e^{i\phi}f(x+2\pi-t)-f(x)|^2\right)  \no \\
& \quad = (A)+(B).     \lb{A.20}
\end{align}
From \cite[Eq.\ (1.15)]{Le23} one infers that
\begin{equation}
(A)= |||f|||^2_{H^s((0,2\pi))}\big/2,
\end{equation}
where
\begin{equation}
|||f|||^2_{H^s((0,2\pi))}=\int_0^{2\pi} dy \int_0^{2\pi} dx \, \frac{|f(x)-f(y)|^2}{|x-y|^{1+2s}},
\end{equation}
and the $H^s((0,2\pi))$ norm of $f$ is equivalent to
\begin{equation}
\|\dott\|^2_{H^s((0,2\pi))} = |||\dott |||^2_{H^s((0,2\pi))} + \|\dott\|_{L^2((0,2\pi))}^2.
\end{equation}
In other words, $f\in\dom\big(|P_{\phi}|^s\big)$ if and only if
$f\in H^s((0,2\pi))$ and the second integral $(B)$ in \eqref{A.20} is finite.

In this context one observes that
\begin{align}
(B) &= \int_{0}^{2\pi} \frac{dt}{t^{1+2s}} \left(\int_0^{t} dx \, |e^{i\phi}f(x+2\pi-t)-f(x)|^2 \right)   \no \\
&\leq 2 \int_{0}^{2\pi} \f{dt}{t^{1+2s}} \int_0^{t} dx \, |f(x+2\pi-t)|^2
+ 2\int_{0}^{2\pi} \f{dt}{t^{1+2s}} \int_0^{t} dx \, |f(x)|^2   \no \\
&= 2\int_{0}^{2\pi} dx \left[\int_{2\pi-x}^{2\pi} \frac{dt}{t^{1+2s}}\right]|f(x)|^2
+ 2\int_{0}^{2\pi} dx \left[\int_x^{2\pi}\frac{dt}{t^{1+2s}}\right]|f(x)|^2    \no \\
& \leq
\frac{1}{s}\int_0^{2\pi} dx \, \frac{|f(x)|^2}{(2\pi-x)^{2s}}
+ \frac{1}{s}\int_0^{2\pi} dx \, \frac{|f(x)|^2}{x^{2s}}.   \lb{A.23}
\end{align}
If $s \in (0,1/2)$, one gets from Hardy's Inequality \cite[Theorem~1.76]{Le23} that the expression in \eqref{A.23} is bounded by
\begin{equation}
\eqref{A.23}\leq C_s |||f|||_{H^s((0,2\pi))}^2,
\end{equation}
where the constant $C_s \in (0,\infty)$ only depends on $s$. Hence, if $s \in (0,1/2)$, one concludes that 
$f\in\dom\big(|P_{\phi}|^s\big)$ if and only if $f\in H^s((0,2\pi))$.

If $s=1/2$, then the condition
\begin{equation}
(B)=\int_{0}^{2\pi} \frac{dt}{t^{2}} \left(\int_0^{t}|e^{i\phi}f(x+2\pi-t)-f(x)|^2dx\right) < \infty
\end{equation}
corresponds precisely to what is stated in item $(ii)$.

Finally, if $s \in (1/2,1)$, it follows from Lemma \ref{lA.3} that if $f\in\dom\big(|P_{\phi}|^s\big)$, then $f$ satisfies the boundary condition $f(0)=e^{i\phi}f(2\pi)$. In other words,
\begin{equation}
\dom\big(|P_{\phi}|^s\big)\subseteq \big\{f\in H^s((0,2\pi)) \, \big| \, f(0)=e^{i\phi}f(2\pi)\big\}.
\end{equation}
To prove the reverse inclusion, we note that estimate \eqref{A.23} implies that
\begin{equation}
H^s_{0,0}((0,2\pi))\subseteq \dom\big(|P_{\phi}|^s\big)
\end{equation}
(see \cite[Exercise 1.88]{Le23}). But for $s\in (1/2,1)$, it is known \cite[Theorem~1.87]{Le23} that
\begin{equation}
H_{0,0}^s((0,2\pi)) = H^s_0((0,2\pi)) = \big\{f\in H^s((0,2\pi)) \big| \, f(0)=f(2\pi)=0\big\}.
\end{equation}
Since $\psi_{\phi,0}\in\dom\big(|P_{\phi}|^s\big)$, one obtains
\begin{equation}
H_0^s(0,2\pi)\dot{+}\mbox{lin.span}\{\psi_{\phi,0}\}\subseteq \dom\big(|P_{\phi}|^s\big).
\end{equation}
One verifies that
\begin{equation}
H_0^s(0,2\pi)\dot{+}\mbox{lin.span}\{\psi_{\phi,0}\} = \big\{f\in H^s((0,2\pi)) \, \big| \, f(0)=e^{i\phi}f(2\pi)\big\},
\end{equation}
completing the proof.
\end{proof}

Next, we turn to a characterization of $\dom\big(A_{\phi}^{s/2}\big)$ for arbitrary $s\in\R$. Since we already provided a complete description of $\dom\big(A_{\phi}^{s/2}\big)$ for $s\in(0,1)$, we restrict our considerations to the case $s\in [1,\infty)$.

\begin{theorem} \lb{tA.5}
Let $s \in [1,\infty)$ and write $s=m+\vt\geq 1$, where $m = \lfloor s\rfloor \in\N$ is the integer part of $s$ and $\vt= \{s\}$ the fractional part of $s$. Then the following assertions $(i)$--$(iii)$ hold: \\[1mm]
$(i)$ If $\vt \in [0,1/2)$, then
\begin{align}
\begin{split}
& \dom\big(A_{\phi}^{(m+\vt)/2}\big) = \dom\big(|P_{\phi}|^{m+\vt}\big)   \\
& \quad = \big\{f\in H^{m+\vt}((0,2\pi)) \, \big| \, f^{(k)}(0)=e^{i\phi}f^{(k)}(2\pi), \, k\in\{0,1,\cdots,m-1\}\big\}.
\end{split}
\end{align}
$(ii)$ If $\vt=1/2$, then
\begin{align}
&\dom\big(A_{\phi}^{(m+(1/2))/2}\big) = \dom\big(|P_{\phi}|^{m+(1/2)}\big)   \no \\
&\quad = \bigg\{f\in H^{m+1/2}((0,2\pi)) \, \bigg| \, f^{(k)}(0)=e^{i\phi}f^{(k)}(2\pi), \, k\in\{0,1,\cdots,m-1\};    \\   &\hspace*{3.35cm} \int_0^{2\pi} \f{dt}{t^2} \int_0^t dx \, |e^{i\phi}f^{(m)}(x+2\pi-t)-f^{(m)}(x)|^2 < \infty\Bigg\}.    \no
\end{align}
$(iii)$ If $\vt \in (1/2,1)$, then
\begin{align}
\begin{split}
& \dom\big(A_{\phi}^{(m+\vt)/2}\big) = \dom\big(|P_{\phi}|^{m+\vt}\big)    \\
& \quad =\big\{f\in H^{m+\vt}((0,2\pi)) \, \big| \, f^{(k)}(0)=e^{i\phi}f^{(k)}(2\pi)\:\:\forall k\in\{0,1,\cdots,m\}\big\}.
\end{split}
\end{align}
\end{theorem}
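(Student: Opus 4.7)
The plan is to reduce the statement for $s = m + \vt$ with $m \in \bbN$ and $\vt \in [0,1)$ to the already established Theorem \ref{tA.4} for $\vt \in (0,1)$, by exploiting the multiplicative splitting provided by the functional calculus for the self-adjoint operator $P_\phi$:
\begin{equation*}
\dom\big(|P_\phi|^{m+\vt}\big) = \big\{f \in \dom\big(P_\phi^m\big) \,\big|\, P_\phi^m f \in \dom\big(|P_\phi|^\vt\big)\big\}.
\end{equation*}
This relation follows directly from the multiplication-operator realization in Example \ref{e3.3}, where $|P_\phi|^{m+\vt}$ corresponds, via $\cF_\phi$, to multiplication by $|n - \phi/(2\pi)|^{m+\vt}$ on $\ell^2(\bbZ)$ and the identity $|n - \phi/(2\pi)|^{m+\vt} = |n - \phi/(2\pi)|^\vt \cdot |n - \phi/(2\pi)|^m$ is elementary.

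The first step is to characterize $\dom(P_\phi^m)$ for integer $m$ by induction on $m$; this will also settle case (i) in the degenerate situation $\vt = 0$ ($s = m$). The claim is
\begin{equation*}
\dom\big(P_\phi^m\big) = \big\{f \in H^m((0,2\pi)) \,\big|\, f^{(k)}(0) = e^{i\phi} f^{(k)}(2\pi),\; k = 0, 1, \ldots, m-1\big\}.
\end{equation*}
The base case $m = 1$ is built into the definition of $P_\phi$. For the inductive step one uses $P_\phi^{m+1} = P_\phi \circ P_\phi^m$: $f \in \dom\big(P_\phi^{m+1}\big)$ iff $f \in \dom(P_\phi)$ and $P_\phi f = i f' \in \dom(P_\phi^m)$. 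The inductive hypothesis applied to $if'$ forces $f' \in H^m$ with $(f')^{(k)}(0) = e^{i\phi}(f')^{(k)}(2\pi)$ for $k = 0, \ldots, m-1$; combining with $f(0) = e^{i\phi} f(2\pi)$ and the elementary equivalence $f \in H^{m+1}((0,2\pi)) \Leftrightarrow f, f' \in H^m((0,2\pi))$ yields the full list of boundary conditions at levels $k = 0, 1, \ldots, m$.

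The second step treats $\vt \in (0,1)$. For $f \in \dom(P_\phi^m)$, set $g := P_\phi^m f = i^m f^{(m)}$. The spectral splitting reduces $f \in \dom\big(|P_\phi|^{m+\vt}\big)$ to $g \in \dom\big(|P_\phi|^\vt\big)$, and Theorem \ref{tA.4} now describes the latter. In case (i), $\vt \in (0,1/2)$, Theorem \ref{tA.4}(i) yields $g \in H^\vt((0,2\pi))$ with no additional boundary condition; this, combined with $f \in H^m((0,2\pi))$ and the periodic-type boundary conditions at levels $k = 0, \ldots, m-1$, gives precisely $f \in H^{m+\vt}((0,2\pi))$ with the stated conditions. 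In case (ii), $\vt = 1/2$, Theorem \ref{tA.4}(ii) contributes the explicit integral condition on $g$, which, rewritten in terms of $f^{(m)}$, produces exactly the integral condition appearing in the statement. In case (iii), $\vt \in (1/2,1)$, Theorem \ref{tA.4}(iii) imposes the extra equality $g(0) = e^{i\phi} g(2\pi)$, which translates into $f^{(m)}(0) = e^{i\phi} f^{(m)}(2\pi)$ and extends the list of boundary conditions to $k = 0, \ldots, m$.

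The main technical point requiring verification is the identification, as sets with equivalent norms,
\begin{equation*}
\big\{f \in L^2((0,2\pi)) \,\big|\, f \in H^m((0,2\pi)),\; f^{(m)} \in H^\vt((0,2\pi))\big\} = H^{m+\vt}((0,2\pi)), \quad \vt \in (0,1).
\end{equation*}
This is standard for fractional Sobolev spaces on bounded intervals, obtainable either via restriction/extension from $\bbR$ or directly from the Gagliardo--Slobodeckij seminorm. A secondary subtlety is the well-posedness of the pointwise boundary conditions at intermediate levels for fractional-order functions: the embedding $H^{m+\vt}((0,2\pi)) \hookrightarrow C^{m-1}([0,2\pi])$ (for any $\vt \in (0,1)$) guarantees that the conditions for $k = 0, \ldots, m-1$ are unambiguous, while the refined embedding $H^{m+\vt}((0,2\pi)) \hookrightarrow C^m([0,2\pi])$ for $\vt \in (1/2,1)$ justifies the additional condition at level $m$ in case (iii), matching exactly the pointwise hypothesis of Theorem \ref{tA.4}(iii) transferred to $g$.
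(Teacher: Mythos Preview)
Your proposal is correct and follows essentially the same approach as the paper: split $\dom\big(|P_\phi|^{m+\vt}\big)$ via the functional calculus as $\big\{f\in\dom(P_\phi^m)\,\big|\,P_\phi^m f\in\dom(|P_\phi|^\vt)\big\}$, identify $\dom(P_\phi^m)$ with the $H^m$-space plus periodic-type boundary conditions up to order $m-1$, and then invoke Theorem~\ref{tA.4} for the fractional part. The paper's proof is more terse, simply asserting the characterization of $\dom(P_\phi^m)$ and the Sobolev identification $f\in H^m,\,f^{(m)}\in H^\vt\Leftrightarrow f\in H^{m+\vt}$ without further comment; your inductive argument and the remarks on embeddings supply exactly the details the paper omits.
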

\begin{proof}
Since
\begin{equation}
\dom\big(A_{\phi}^{s/2}\big)=\dom\big(A_{\phi}^{(m+\vt)/2}\big)
= \Big\{f\in\dom(A_{\phi}^{m/2}) \, \Big| \, A_{\phi}^{m/2}f\in\dom\big(A_{\phi}^{\vt/2}\big)\Big\},
\end{equation}
and
\begin{align}
\dom\big(A_{\phi}^{m/2}\big) &= \dom\big(|P_{\phi}|^m\big)=\dom\big(P_{\phi}^m\big)    \\
&= \big\{f\in H^m((0,2\pi)) \, \big| \, f^{(k)}(0)=e^{i\phi}f^{(k)}(2\pi), \, k\in\{0,1,\cdots,m-1\}\big\},   \no
\end{align}
the result follows from Theorem \ref{tA.4}, which provides the description of the domains
$\dom\big(A_{\phi}^{\vt/2}\big)$.
\end{proof}

\section{More on Fractional Sobolev Spaces on $(0,2\pi)$} \lb{sB}

Although not needed in the bulk of this paper, we include the following results as they seem interesting in their own right.

The motivation for writing this appendix, in part, stems from the classical result that the form domain, $\dom\big(T_F^{1/2}\big)$, of the Friedrichs extension, $T_F$, of a lower semibounded symmetric operator $\dot T$ in the separable, complex Hilbert space $\cH$, is a proper subset of the form domain of any other nonnegative self-adjoint extension of $\dot T$ (cf.\ e.g., \cite{AS80}). In the following we assume that $\dot T$ has a strictly positive lower bound, $\dot T \geq \varepsilon I_{\cH}$ for some $\varepsilon > 0$ and non-zero, finite deficiency indices. Viewing the inclusion relation ``$\subseteq$" as a partial order on the form domains of all nonnegative self-adjoint extensions of $\dot T$, this means that $\dom\big(T_F^{1/2}\big)$ is its unique minimal element.

Since for any two nonnegative self-adjoint extensions $T_1, T_2$ of $\dot T$, one has (this is a corollary of \cite[Proposition~2.4, Theorem~3.1]{AS80})
\begin{align}
\begin{split}
& \dom\big(T_1^{1/2}\big) \subseteq \dom\big(T_2^{1/2}\big) \, \text{ if and only if } \\
& \quad \dom(T_2)\cap\dom(T_F) \subseteq \dom(T_1)\cap\dom(T_F),
\end{split}
\end{align}
the maximal elements with respect to this partial order correspond to the form domains of those extensions $T_2$ of $\dot T$ that are disjoint (equivalently, relatively prime) to $T_F$, that is\footnote{Since the deficiency indices of $\dot T$ are assumed to be finite, disjointness also implies transversality, see, e.g., \cite[Lemma~1.7.7]{{BHS20}}}, they satisfy $\dom(T_2)\cap\dom(T_F)=\dom(\dot T)$.
The Krein--von Neumann extension of $\dot T$ is one such example, but there are of course infinitely many more. Hence, assuming that $T_2$ satisfies $\dom(T_2)\cap\dom(T_F)=\dom(\dot T)$, one concludes that for any nonnegative self-adjoint extension $T_1$ of $\dot T$, with  $T_1\neq T_F$, one has
\begin{equation}
\dom\big(T_F^{1/2}\big)\subsetneq\dom\big(T_1^{1/2}\big) \subseteq \dom\big(T_2^{1/2}\big).
\end{equation}

Next, we choose $\dot A$ to be the usual minimal Laplacian on the interval $(0,2\pi)$:
\begin{equation}
(A_{min}f)(x)= - f''(x), \quad x \in (0,2\pi), \quad f \in \dom(A_{min})=H^2_0((0,2\pi)),
\end{equation}
whose Friedrichs extension $A_F$ is the Dirichlet Laplacian
\begin{align}
& (A_F f)(x) = - f''(x), \quad x \in (0,2\pi),   \no \\
& f \in \dom(A_F) = \big\{g\in H^2((0,2\pi)) \, \big| \, g(0)=g(2\pi)=0\big\}      \\
& \hspace*{2cm} = H^2((0,2\pi)) \cap H^1_0((0,2\pi)),     \no
\end{align}
whose form domain is given by
\begin{equation}
\dom\big(A_F^{1/2}\big) = H^1_0((0,2\pi)).
\end{equation}
For the disjoint extension, we choose the Neumann Laplacian $A_N$, given by
\begin{align}
\begin{split}
& (A_N f)(x) = - f''(x), \quad x \in (0,2\pi),   \\
& f \in \dom(A_N) = \big\{g\in H^2((0,2\pi)) \, \big| \, g'(0)=g'(2\pi)=0\big\},
\end{split}
\end{align}
with form domain
\begin{equation}
\dom\big(A_N^{1/2}\big) = H^1((0,2\pi)).
\end{equation}
For the form domains, one gets the inclusions
\begin{equation}
\dom\big(A_F^{1/2}\big)\subsetneqq \dom\big(A_{\phi}^{1/2}\big)\subsetneqq \dom\big(A_N^{1/2}\big).
\end{equation}
We now show that for this particular example, the inclusions
\begin{equation}
\dom\big(A_F^{s/2}\big)\subsetneqq \dom\big(A_{\phi}^{s/2}\big)\subsetneqq \dom\big(A_N^{s/2}\big),
\quad s\in[1/2,1),
\end{equation}
hold. However, if $s\in(0,1/2)$, these inclusions collapse and one obtains
\begin{equation}
\dom\big(A_F^{s/2}\big)=\dom\big(A_{\phi}^{s/2}\big)=\dom\big(A_N^{s/2}\big)=H^s((0,2\pi)), \quad s\in(0,1/2),
\end{equation}
instead. To this end, we need the following result.

\begin{theorem}\lb{tB.1}
Let $\phi, \phi_1, \phi_2 \in [0,2\pi)$. Then
\begin{equation} \lb{B.11}
H^{1/2}_{0,0}((0,2\pi))\subsetneqq \dom\big(A_{\phi}^{1/4}\big)\subsetneqq H^{1/2}((0,2\pi)).
\end{equation}
Moreover, if $\phi_1\neq \phi_2$, then
\begin{equation} \lb{B.12}
\dom\big(A_{\phi_1}^{1/4}\big)\cap \dom\big(A_{\phi_2}^{1/4}\big)=H_{0,0}^{1/2}((0,2\pi)).
\end{equation}
\end{theorem}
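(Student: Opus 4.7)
The plan is to combine the Hardy-integral characterization of $H^{1/2}_{0,0}((0,2\pi))$ coming from extension by zero with the intermediate estimates used in the proof of Theorem \ref{tA.4}(ii). From that proof, $f\in\dom\big(A_\phi^{1/4}\big)$ iff $f\in H^{1/2}((0,2\pi))$ together with finiteness of the quantity
\[
(B):=\int_0^{2\pi}\frac{dt}{t^2}\int_0^t dx\,\big|e^{i\phi}f(x+2\pi-t)-f(x)\big|^2
\]
from \eqref{A.20}; and a short computation with $\widetilde f$ shows that $f\in H^{1/2}_{0,0}((0,2\pi))$ iff $f\in H^{1/2}((0,2\pi))$ together with $\int_0^{2\pi}|f(x)|^2/x\,dx+\int_0^{2\pi}|f(x)|^2/(2\pi-x)\,dx<\infty$.

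For \eqref{B.11}, the inclusion $H^{1/2}_{0,0}((0,2\pi))\subseteq\dom\big(A_\phi^{1/4}\big)$ follows by applying the pointwise bound leading to \eqref{A.23} with $s=1/2$, which estimates $(B)$ by twice the sum of the two Hardy integrals. The containment $\dom\big(A_\phi^{1/4}\big)\subseteq H^{1/2}((0,2\pi))$ is immediate from the definition. For strictness on the left, the eigenfunction $\psi_{\phi,0}$ lies trivially in $\dom\big(A_\phi^{1/4}\big)$, yet $|\psi_{\phi,0}|\equiv (2\pi)^{-1/2}$ is a nonzero constant, so its Hardy integral diverges. For strictness on the right, choose $f\in C^\infty([0,2\pi])$ with $f\equiv 1$ on $[0,\pi]$ and $f\equiv 0$ on $[3\pi/2,2\pi]$: for $t<\pi/2$ the inner integrand equals $1$ on $x\in(0,t)$, so $(B)\geq \int_0^{\pi/2}dt/t=\infty$, although $f\in C^\infty\subset H^{1/2}$.

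The main content is the nontrivial inclusion in \eqref{B.12}. The direction $\supseteq$ follows from \eqref{B.11}. For $\subseteq$, assume $f\in\dom\big(A_{\phi_1}^{1/4}\big)\cap\dom\big(A_{\phi_2}^{1/4}\big)$ and set $D_j(x,t):=e^{i\phi_j}f(x+2\pi-t)-f(x)$, so both iterated integrals of $|D_j|^2$ against $t^{-2}\,dx\,dt$ over the $(B)$-region are finite. The crucial algebraic identities
\begin{align*}
e^{i\phi_2}D_1(x,t)-e^{i\phi_1}D_2(x,t) &= (e^{i\phi_1}-e^{i\phi_2})f(x),\\
D_1(x,t)-D_2(x,t) &= (e^{i\phi_1}-e^{i\phi_2})f(x+2\pi-t),
\end{align*}
combined with $(\alpha+\beta)^2\leq 2\alpha^2+2\beta^2$, yield pointwise bounds on $|f(x)|^2$ and $|f(x+2\pi-t)|^2$ by $2|D_1|^2+2|D_2|^2$ up to the nonzero factor $|e^{i\phi_1}-e^{i\phi_2}|^2$. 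Integrating each bound against $t^{-2}\,dx\,dt$ and applying Fubini with $\int_x^{2\pi}t^{-2}dt=x^{-1}-(2\pi)^{-1}$ for the first, and the substitution $y=x+2\pi-t$ followed by an analogous Fubini for the second, produces $\int_0^{2\pi}|f(x)|^2/x\,dx<\infty$ and $\int_0^{2\pi}|f(y)|^2/(2\pi-y)\,dy<\infty$. Together with $f\in H^{1/2}((0,2\pi))$, this places $f\in H^{1/2}_{0,0}((0,2\pi))$.

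The main obstacle is the bookkeeping of the two Fubini exchanges and the change of variables needed to extract both Hardy integrals simultaneously; the algebraic identities themselves are elementary once one writes down both defects $D_1,D_2$ and linearly combines them to isolate $f(x)$ and $f(x+2\pi-t)$.
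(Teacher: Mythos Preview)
Your proof of \eqref{B.11} follows the paper's approach essentially verbatim: the same Hardy-integral estimate \eqref{A.23} for the inclusion, the same eigenfunction $\psi_{\phi,0}$ for left strictness, and a smooth cutoff near $0$ for right strictness (the paper uses a cutoff supported near $0$ rather than on $[0,\pi]$, but the mechanism is identical).

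For \eqref{B.12} your argument takes a genuinely different and cleaner route. The paper expands each $|D_j|^2$ into a diagonal part $|f(x+2\pi-t)|^2+|f(x)|^2$ and a cross term $-2\Re\big(e^{-i\phi_j}\overline{f(x+2\pi-t)}f(x)\big)$, then splits into cases according to whether the integrated cross term vanishes, and in the nonvanishing case forms a ratio $M$ of the two real parts in order to take the combination $(B_{\phi_1})-M(B_{\phi_2})$ and cancel the cross terms. This requires keeping track of which auxiliary integrals are individually finite and why $M\neq 1$. Your approach bypasses all of this: the two pointwise identities
\[
e^{i\phi_2}D_1-e^{i\phi_1}D_2=(e^{i\phi_1}-e^{i\phi_2})f(x),\qquad
D_1-D_2=(e^{i\phi_1}-e^{i\phi_2})f(x+2\pi-t),
\]
immediately bound $|f(x)|^2$ and $|f(x+2\pi-t)|^2$ by $2|D_1|^2+2|D_2|^2$ up to the nonzero constant $|e^{i\phi_1}-e^{i\phi_2}|^{-2}$, and then Tonelli (all integrands nonnegative, so no convergence issues) gives both Hardy integrals directly. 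This is more elementary, avoids the case distinction entirely, and is more robust since it never requires the cross integrals to make sense on their own.
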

\begin{proof} By \cite[Exercise 1.88]{Le23}, one has
\begin{equation}
H_{0,0}^{1/2}((0,2\pi))=\bigg\{f\in H^{1/2}((0,2\pi)) \, \bigg| \, \int_{0}^{2\pi} dx \, \left(\frac{|f(x)|^2}{x}+\frac{|f(x)|^2}{2\pi-x}\right) < \infty\bigg\},
\end{equation}
so it immediately follows from \eqref{A.23}, with $s=1/2$, that $f\in H^{1/2}_{0,0}((0,2\pi))$ implies $f\in \dom\big(A_{\phi}^{1/4}\big)$. To show that the inclusion is strict, we note that the function
$\psi_{\phi,0}(x)$ is an element of $\dom\big(A_{\phi}^{1/4}\big)$, but not of $H^{1/2}_{0,0}((0,2\pi))$.

Since the inclusion $\dom\big(A_{\phi}^{1/4}\big) \subseteq H^{1/2}((0,2\pi))$ is clear, it remains to show that it is strict. This can be seen by choosing a $C^{\infty}$-function $u$ which equals one on the interval $(0,\varepsilon)$, for $0 < \varepsilon$ sufficiently small, and equals zero on $(2\varepsilon,2\pi)$.

We finish by showing \eqref{B.12}. The inclusion $``\supseteq"$ follows from \eqref{B.11}. Now suppose that
$f\in \dom\big(A_{\phi_1}^{1/4}\big)\cap \dom\big(A_{\phi_2}^{1/4}\big)$, which means that $f\in H^{1/2}((0,2\pi))$. Then
\begin{equation} \lb{B.14}
\int_{0}^{2\pi} \frac{dt}{t^{2}} \, \left(\int_0^{t} dx \, |e^{i\phi_1}f(x+2\pi-t)-f(x)|^2\right) < \infty,
\end{equation}
and
\begin{equation}\lb{B.15}
\int_{0}^{2\pi} \frac{dt}{t^{2}} \, \left(\int_0^{t} dx \, |e^{i\phi_2}f(x+2\pi-t)-f(x)|^2\right) < \infty.
\end{equation}
Now, if
\begin{equation} \lb{B.16}
\int_{0}^{2\pi} \frac{dt}{t^{2}} \, \left(\int_0^{t} dx \,\overline{f(x+2\pi-t)}f(x)\right) = 0,
\end{equation}
then, due to the absence of cross-terms, conditions \eqref{B.14} and \eqref{B.15} are equivalent and simplify to
\begin{equation}
\int_{0}^{2\pi} \frac{dt}{t^{2}} \, \left(\int_0^{t} dx \, \big[|f(x+2\pi-t)|^2+|f(x)|^2\big]\right) < \infty.
\end{equation}
Similarly, one gets
\begin{align} 
&\int_{0}^{2\pi} \frac{dt}{t^{2}} \left(\int_0^{t} dx \, \big[|f(x+2\pi-t)|^2+|f(x)|^2\big]\right)     \no \\
&\quad =\int_{0}^{2\pi} dx \left[\int_{2\pi-x}^{2\pi} \frac{dt}{t^{1+2s}}\right]|f(x)|^2
+ 2\int_{0}^{2\pi} dx \left[\int_x^{2\pi}\frac{dt}{t^{1+2s}}\right]|f(x)|^2    \no \\
& \quad = \int_0^{2\pi} dx \left(\frac{|f(x)|^2}{x}+\frac{|f(x)|^2}{2\pi-x}\right)+\widetilde{C}\|f\|_{L^2((0,2\pi))}^2, \lb{B.19}
\end{align}
which, together with \eqref{B.16}, implies $f\in H^{1/2}_{0,0}((0,2\pi))$. Hence, from now on, assume that the integral in \eqref{B.16} is not equal to zero. Define the constant 
\begin{equation}
M:=\frac{\Re\Big(\int_0^{2\pi} \frac{dt}{t^2}\int_0^t dx \, e^{- i\phi_1}\ol{f(x+2\pi-t)} f(x)\Big)}{\Re\Big(\int_0^{2\pi}\frac{dt}{t^2} \int_0^t dx \, e^{-i\phi_2} \ol{f(x+2\pi-t)}f(x)\Big)}
\end{equation}
and note that since $\phi_1\neq\phi_2$, one concludes that $M \in \bbR \backslash \{1\}$.
Now, consider  $|\eqref{B.14}-M \eqref{B.15}|$, which simplifies to
\begin{equation}
|1-M| \int_{0}^{2\pi} \frac{dt}{t^{2}} \left(\int_0^{t} dx \, \big[|f(x+2\pi-t)|^2+|f(x)|^2\big]\right) < \infty.
\end{equation}

From this, one concludes that $f\in H^{1/2}_{0,0}((0,2\pi))$ by repeating the calculations leading to 
\eqref{B.19}.
\end{proof}

\begin{corollary} \lb{cB.2} Let $\phi, \phi_1, \phi_2 \in [0,2\pi)$. Then the following items $(i)$ and $(ii)$ hold: \\[1mm]
$(i)$ If $s\in(0,1/2)$, then
\begin{equation}\lb{B.22}
\dom\big(A_F^{s/2}\big)=\dom\big(A_{\phi}^{s/2}\big)= \dom\big(A_N^{s/2}\big)=H^s((0,2\pi)).
\end{equation}
$(ii)$ If $s\in[1/2,1)$, then
\begin{equation}
\dom\big(A_F^{s/2}\big)\subsetneq\dom\big(A_{\phi}^{s/2}\big)\subsetneqq \dom\big(A_N^{s/2}\big).
\end{equation}
Moreover, if $\phi_1\neq \phi_2$, then
\begin{equation}
\dom\big(A_{\phi_1}^{s/2}\big) \cap \dom\big(A_{\phi_2}^{s/2}\big) = \dom\big(A_F^{s/2}\big).
\end{equation}
\end{corollary}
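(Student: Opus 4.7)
\medskip

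\noindent
\textbf{Proof proposal for Corollary \ref{cB.2}.}

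The strategy is to reduce everything to explicit characterizations of the three fractional-power domains $\dom\big(A_F^{s/2}\big)$, $\dom\big(A_\phi^{s/2}\big)$, $\dom\big(A_N^{s/2}\big)$ in terms of (possibly boundary-restricted) fractional Sobolev spaces on $(0,2\pi)$, and then compare them. For $A_\phi$, these characterizations are already provided by Theorem \ref{tA.4}. For $A_F$ and $A_N$ one can proceed in two equivalent ways: either replicate the Appendix \ref{sA} argument using the Dirichlet sine-basis and Neumann cosine-basis, or use Theorem \ref{tA.3} (applied to $A_F$, resp.\ $A_N + I_{L^2((0,2\pi))}$, to guarantee strict positivity) to identify $\dom\big(A_F^{s/2}\big) = \big(L^2((0,2\pi)), \dom\big(A_F^{1/2}\big)\big)_{s,2} = \big(L^2((0,2\pi)), H^1_0((0,2\pi))\big)_{s,2}$ and $\dom\big(A_N^{s/2}\big) = \big(L^2((0,2\pi)), H^1((0,2\pi))\big)_{s,2}$. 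Classical interpolation (cf.\ \cite{Tr78} or \cite{Le23}) then identifies these spaces as $H^s_{0,0}((0,2\pi))$ and $H^s((0,2\pi))$, respectively, with the convention $H^s_{0,0}((0,2\pi)) = H^s((0,2\pi))$ for $s \in (0,1/2)$ and $H^s_{0,0}((0,2\pi)) = \{f \in H^s((0,2\pi))\,|\, f(0)=f(2\pi)=0\}$ for $s \in (1/2,1)$.

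\medskip

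For part $(i)$, once the three domains are identified via interpolation, the result is immediate: Theorem \ref{tA.4}\,$(i)$ gives $\dom\big(A_\phi^{s/2}\big) = H^s((0,2\pi))$ for $s \in (0,1/2)$, while the interpolation identifications above yield $\dom\big(A_F^{s/2}\big) = H^s_{0,0}((0,2\pi)) = H^s((0,2\pi))$ and $\dom\big(A_N^{s/2}\big) = H^s((0,2\pi))$ for $s \in (0,1/2)$. This proves \eqref{B.22}.

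\medskip

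For part $(ii)$ at $s=1/2$, everything reduces to Theorem \ref{tB.1}: combining the chain \eqref{B.11} with the identifications $\dom\big(A_F^{1/4}\big) = H^{1/2}_{0,0}((0,2\pi))$ and $\dom\big(A_N^{1/4}\big) = H^{1/2}((0,2\pi))$ yields the strict inclusions, and the intersection statement is exactly \eqref{B.12}. For $s \in (1/2,1)$, Theorem \ref{tA.4}\,$(iii)$ gives $\dom\big(A_\phi^{s/2}\big) = \{f \in H^s((0,2\pi)) \,|\, f(0) = e^{i\phi} f(2\pi)\}$, while the interpolation identifications yield $\dom\big(A_F^{s/2}\big) = \{f \in H^s((0,2\pi)) \,|\, f(0)=f(2\pi)=0\}$ and $\dom\big(A_N^{s/2}\big) = H^s((0,2\pi))$. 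Strict containment follows by exhibiting explicit witnesses: the constant function $1$ (or, equivalently, $\psi_{\phi,0}$) lies in $\dom\big(A_\phi^{s/2}\big) \setminus \dom\big(A_F^{s/2}\big)$, while any smooth bump function supported near $0$ but not at $2\pi$ (suitably adjusted) lies in $\dom\big(A_N^{s/2}\big) \setminus \dom\big(A_\phi^{s/2}\big)$. Finally, if $f \in \dom\big(A_{\phi_1}^{s/2}\big) \cap \dom\big(A_{\phi_2}^{s/2}\big)$, then $f \in H^s((0,2\pi))$ (so trace values at $0$ and $2\pi$ are well-defined since $s > 1/2$) and $e^{i\phi_1} f(2\pi) = f(0) = e^{i\phi_2} f(2\pi)$; since $e^{i\phi_1} \neq e^{i\phi_2}$, this forces $f(2\pi) = 0$ and hence $f(0) = 0$, placing $f$ in $\dom\big(A_F^{s/2}\big)$. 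The reverse inclusion is clear.

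\medskip

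The main (and essentially only nontrivial) obstacle is the identification of the interpolation spaces $\big(L^2, H^1_0\big)_{s,2}$ and $\big(L^2, H^1\big)_{s,2}$ with the correct boundary-restricted Sobolev spaces $H^s_{0,0}((0,2\pi))$ and $H^s((0,2\pi))$, including the subtlety that the distinction between $H^s_0$ and $H^s_{0,0}$ collapses for $s \in (0,1/2)$ and reappears for $s \in (1/2,1)$. This can be handled either by appealing to classical references (Triebel \cite{Tr78}, Leoni \cite{Le23}) or by mimicking the Weierstra\ss{} $M$-test and Hardy-inequality arguments used in the proof of Theorem \ref{tA.4} applied to the Dirichlet and Neumann eigenbases. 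All remaining steps are direct consequences of Theorems \ref{tA.4} and \ref{tB.1} together with elementary trace arguments.
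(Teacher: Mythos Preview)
Your proposal is correct and follows essentially the same route as the paper: identify $\dom\big(A_F^{s/2}\big)=H^s_{0,0}((0,2\pi))$ and $\dom\big(A_N^{s/2}\big)=H^s((0,2\pi))$, then combine with Theorem~\ref{tA.4} (for $A_\phi$) and Theorem~\ref{tB.1} (for the $s=1/2$ case). The only real difference is how the Dirichlet/Neumann identifications are obtained: the paper cites Fujiwara \cite{Fu67} directly, whereas you derive them via Theorem~\ref{tA.3} plus classical interpolation of $(L^2,H^1_0)_{s,2}$ and $(L^2,H^1)_{s,2}$. Your version is also more explicit for $s\in(1/2,1)$---you actually exhibit witnesses for strictness and spell out the trace argument $e^{i\phi_1}f(2\pi)=f(0)=e^{i\phi_2}f(2\pi)\Rightarrow f(0)=f(2\pi)=0$ for the intersection---where the paper simply says ``the result follows from the explicit form of $\dom\big(A_\phi^{s/2}\big)$ given in \eqref{A.14}.''
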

\begin{proof} By \cite{Fu67}, one has
\begin{equation}
\dom\big(A_F^{s/2}\big)=H^{s}_{0,0}((0,2\pi)), \quad \dom\big(A_N^{s/2}\big)=H^s((0,2\pi)), \quad s\in(0,1).
\end{equation}

By \cite[Theorem~6.105\,$(i)$]{Le23} (see also \cite[Theorem~1.87\,$(i)$]{Le23}) one has
\begin{equation}
H_{0,0}^{s}((0,2\pi)) = H^s_0((0,2\pi)) = H^s((0,2\pi)), \quad s\in(0,1/2),
\end{equation}
which together with \eqref{A.12} implies \eqref{B.22}.
If $s=1/2$, the result follows from Theorem \ref{tB.1}.
Finally, if $s\in(1/2,1)$, the result follows from the explicit form of $\dom\big(A_{\phi}^{s/2}\big)$ given in \eqref{A.14}.
\end{proof}

\section{Domains of Fractional Laplacians on $(0,\infty)$} \lb{sC}

Finally, we extend some of the results from Appendix \ref{sB} in connection with the bounded interval $(0,2\pi)$ to the half-axis $(0,\infty)$.

The form domains of $B_{\alpha}$, $\alpha\in[\pi/2,\pi]$, are given by
\begin{equation}
\dom\big(B_{\alpha}^{1/2}\big)=\begin{cases} H^1_0((0,\infty)), & \alpha=\pi,   \\
H^1((0,\infty)), & \alpha\in[\pi/2,\pi). \end{cases}
\end{equation}
Using the spectral theorem for self-adjoint operators, one concludes that
\begin{equation}
\dom(B_{\alpha}^{s/2})=\dom(B_{\alpha,1}^{s/2}), \quad \alpha\in[\pi/2,\pi], \; s \in (0,\infty).    \lb{C.2}
\end{equation}
For $s \in (0,1]$, the result \eqref{C.2} also follows from interpolation theory by \cite[Lemma\ 4.11]{Lu18}. In particular,  fractional powers of strictly positive operators defined via the spectral theorem and via (real or complex) interpolation are consistent, see, \cite[Theorem~4.36]{Lu18}.

Employing \cite[Sec.\ 1.10.3 and Theorems\ 1.11.6 and 1.11.7]{LM72}, one obtains
\begin{equation}
\dom\big(B_{\alpha}^{s/2}\big)=\begin{cases} H^s_{0,0}((0,\infty)), & \alpha=\pi,      \\
H^s((0,\infty)), & \alpha\in[\pi/2,\pi), \end{cases}
\quad s \in (0,1].   \lb{C.3}
\end{equation}
One then notes the following facts, see, \cite[Theorem\ 1.87]{Le23} (see also \cite[Sect.~1.11]{LM72}):
\begin{equation}
H_{0,0}^s((0,\infty))=H_{0}^s((0,\infty))=H^s((0,\infty)), \quad s\in(0,1/2);    \lb{C.4}
\end{equation}
moreover,
\begin{align}
\begin{split}
H^{1/2}_{0,0}((0,\infty)) &= \left\{f\in H^{1/2}((0,\infty))\bigg|\int_0^\infty dx\, \frac{|f(x)|^2}{x}<\infty\right\}    \\
& \subsetneqq H_0^{1/2}((0,\infty))=H^{1/2}((0,\infty)), \quad s=1/2,      \lb{C.5}
\end{split}
\end{align}
and
\begin{equation}
H_{0,0}^{s}((0,\infty))=H_0^{1/2}((0,\infty))\subsetneqq H^{s}((0,\infty)), \quad s\in(1/2,1).  \lb{C.6}
\end{equation}

In this context we note that the case $\alpha = \pi$ in \eqref{C.3} together with \eqref{3.93} yields
\begin{equation}
\dom\big(B_{\pi}^{s/2}\big)=H^s_{0,0}((0,\infty)) = \dom\big(T_{\gamma}^{s/2}\big), \quad \gamma \in (0,\infty), \; s\in(0,1].
\end{equation}

\medskip

\noindent
{\bf Acknowledgments.} We are indebted to John Lee, Marius Mitrea, and Richard Wellman for helpful discussions.


\end{document}